\documentclass[12pt]{iopart}

\newcommand{\la}{\langle} \newcommand{\ra}{\rangle}

\renewcommand{\Re}{\mathrm{Re}\,}
\renewcommand{\Im}{\mathrm{Im}\,}

\newcommand{\ls}{\lesssim} \newcommand{\gs}{\gtrsim}

\newcommand{\R}{\mathbb{R}} \newcommand{\C}{\mathbb{C}}
\newcommand{\N}{\mathbb{N}} \newcommand{\Z}{\mathbb{Z}}
\newcommand{\supp}{\mathrm{supp}\, }

\usepackage{amsthm}
\usepackage{iopams}

\usepackage{graphicx,color}
\usepackage{epsfig}

\newtheorem{theorem}{Theorem}[section]

\newtheorem{proposition}[theorem]{Proposition}

\newtheorem{lemma}[theorem]{Lemma}

\theoremstyle{remark}
\newtheorem{remark}{Remark}

\theoremstyle{definition}
\newtheorem{definition}[theorem]{Definition}

\eqnobysec

\begin{document}

\title[On the 2d Zakharov system]{On the 2d Zakharov system with $L^2$ Schr\"odinger data}

\author{I. Bejenaru$^1$, S. Herr$^2$, J. Holmer$^3$ and D. Tataru$^2$}

\address{$^1$ Department
  of Mathematics, University of Chicago, Chicago, IL 60637, USA}
\ead{\mailto{bejenaru@math.uchicago.edu}}

\address{$^2$ Department of
  Mathematics, University of California, Berkeley, CA 94720-3840, USA}
\eads{\mailto{herr@math.berkeley.edu}, \mailto{tataru@math.berkeley.edu}}

\address{$^3$ Department of
  Mathematics, Brown University, Box 1917, 151 Thayer St., Providence,
  RI 02912, USA} \ead{holmer@math.brown.edu}

\begin{abstract}
  We prove local in time well-posedness for the Zakharov system in two
  space dimensions with large initial data in $L^2\times
  H^{-1/2}\times H^{-3/2}$. This is the space of optimal regularity in
  the sense that the data-to-solution map fails to be smooth at the
  origin for any rougher pair of spaces in the $L^2$-based Sobolev
  scale. Moreover, it is a natural space for the Cauchy problem in
  view of the subsonic limit equation, namely the focusing cubic
  nonlinear Schr\"odinger equation. The existence time we obtain
  depends only upon the corresponding norms of the initial data -- a
  result which is false for the cubic nonlinear Schr\"odinger equation
  in dimension two -- and it is optimal because Glangetas--Merle's
  solutions blow up at that time.
\end{abstract}

\ams{35Q55}

\section{Introduction and main result}\label{sect:intro}
We study the initial-value problem for the Zakharov system in two
spatial dimensions:
\begin{equation}\eqalign{%
      i\partial_t u + \Delta u = nu, \cr
      \partial_t^2 n - \Delta n = \Delta |u|^2,}\label{eq:zs}
  \end{equation}
  where $u:\mathbb{R}^{2+1}\to \mathbb{C}$ and $n:\mathbb{R}^{2+1}\to
\mathbb{R}$, with initial data 
\[
(u|_{t=0}, n|_{t=0}, \partial_t
n|_{t=0}) = (u_0,n_0,n_1).
\]
This system was introduced by Zakharov \cite{zakharov_collapse_1972}
as a model for the propagation of Langmuir waves in a plasma.

We address the question of local well-posedness of \eref{eq:zs} for
large data in low regularity Sobolev spaces.  For $k,\ell \in \R$ we
 define the space 
\[
\mathbf{H}^{k,\ell}:=H^k(\R^2;\C)\times H^{\ell}(\R^2;\R)\times
H^{\ell-1}(\R^2;\R)
\]
with the natural norm.  By $\mathbf{X}^{k,\ell}_T$ we denote the space
of all tempered distributions $(u,n)$ on $(0,T)\times \mathbb{R}^2$
such that
\begin{eqnarray*}
    & u \in C( [0,T]; H^k(\mathbb{R}^2;\C)), \\
    & n \in C( [0,T]; H^\ell(\mathbb{R}^2;\R)) \cap C^1( [0,T];
    H^{\ell-1}(\mathbb{R}^2;\R)).
  \end{eqnarray*}
  with the standard norm, see \eref{eq:x-norm}. For $0<r\leq
R$ we also define
\begin{equation*}
  \mathbf{H}^{k,\ell}_{R,r}:=\{ (u_0,n_0,n_1) \in \mathbf{H}^{k,\ell}:\
  \|(u_0,n_0,n_1)\|_{\mathbf{H}^{k,\ell}}\leq R; \|u_0\|_{L^2}\leq r\}
\end{equation*}
as a metric subspace of $\mathbf{H}^{k,\ell}$.

Our main result is the local well-posedness of \eref{eq:zs} in
$\mathbf{H}^{0,-\frac12}$, which was phrased as an
open problem by Merle \cite[p. 58, ll. 14--15]{merle_blowup_1998}.
\begin{theorem}\label{main}
  For every $0<r\leq R$ and initial data $(u_0,n_0,n_1)\in
  \mathbf{H}^{0,-\frac12}_{R,r}$ and time $T \ls \min\{\la R\ra^{-2}r^{-2},1\}$,
  there exists a subspace $\mathbf{X}_T\subset
  \mathbf{X}_T^{0,-\frac12}$ and a unique solution $(u,n)\in \mathbf{X}_T$ of the Cauchy
  problem \eref{eq:zs}. The map
  \begin{equation*}
    \mathbf{H}_{R,r}^{0,-\frac12}
    \longrightarrow \mathbf{X}_T^{0,-\frac12}:
    \quad (u_0,n_0,n_1) \mapsto (u,n)
  \end{equation*}
  is locally Lipschitz-continuous.
\end{theorem}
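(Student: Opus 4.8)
The proof will be by a fixed-point/contraction argument in a carefully chosen function space, so the first order of business is to recast the system in Duhamel form. Writing the wave part as a first-order system in the variable $n_\pm = n \mp i|\nabla|^{-1}\partial_t n$ (so that $n = \tfrac12(n_+ + n_-)$ and $n_\pm$ solves a half-wave equation $i\partial_t n_\pm \pm |\nabla| n_\pm = \pm |\nabla| |u|^2$), the system becomes
\begin{equation*}
 u(t) = e^{it\Delta} u_0 - i\int_0^t e^{i(t-s)\Delta}\big(n(s)u(s)\big)\,ds,\qquad
 n_\pm(t) = e^{\mp it|\nabla|} n_{\pm,0} \mp i\int_0^t e^{\mp i(t-s)|\nabla|}|\nabla||u(s)|^2\,ds.
\end{equation*}
I would then set up adapted Bourgain-type spaces: an $X^{s,b}$-space $\mathcal{S}$ for the Schr\"odinger variable $u$ built on the paraboloid $\tau = |\xi|^2$ with regularity $s=0$, and a wave $X^{s,b}$-space $\mathcal{W}_\pm$ for $n_\pm$ built on the cones $\tau = \mp|\xi|$ with regularity $\ell = -\tfrac12$. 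Because the problem is $L^2$-critical for $u$ in the sense described in the abstract, the plain $X^{0,1/2}$ spaces will not quite close, so I expect to need the standard refinements — $U^2$/$V^2$-type or Besov-in-modulation modifications near the scaling-critical frequencies, together with the sharp $L^4_{t,x}$ Strichartz estimate for the 2d Schr\"odinger equation and its bilinear/transversality improvements — exactly in the spirit of Ginibre–Tsutsumi–Velo and Bejenaru–Herr. These refinements will only be invoked at the level of the black-box multilinear estimates below.

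**Key multilinear estimates.** The heart of the argument is two estimates, to be proved in the preceding sections and used here as given. First, the \emph{Schr\"odinger estimate}: $\|n u\|_{\mathcal{S}'} \lesssim \|n\|_{\mathcal{W}}\,\|u\|_{\mathcal{S}}$, where $\mathcal{S}'$ is the dual space in which the Duhamel term is estimated; this is a wave $\times$ Schr\"odinger $\to$ Schr\"odinger interaction, handled by a dyadic decomposition in the three frequencies and in the three modulations, using the convolution structure $\tau - |\xi|^2 = (\tau_1 \mp |\xi_1|) + (\tau_2 - |\xi_2|^2)$ plus the $L^4_{t,x}$ Strichartz estimate and the fact that one of the three modulations always dominates the "resonance function" $|\xi|^2 - |\xi_2|^2 \mp |\xi_1|$. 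Second, the \emph{wave estimate}: $\big\| |\nabla| (u_1 \bar{u}_2) \big\|_{\mathcal{W}'_\pm} \lesssim \|u_1\|_{\mathcal{S}}\,\|u_2\|_{\mathcal{S}}$, a Schr\"odinger $\times$ Schr\"odinger $\to$ wave interaction; here the gain of $|\nabla|$ costs one derivative but the target regularity is $-\tfrac32$ for $\partial_t n$, i.e. $-\tfrac12$ after the $|\nabla|^{-1}$, and the key point is the transversality of the two paraboloids against the cone, which supplies the bilinear Strichartz gain that compensates the derivative loss. In both estimates I would keep track of a small positive power of $T$ coming from restricting to the time interval $(0,T)$ via a multiplication by a bump function $\chi(t/T)$ — this is the mechanism by which the existence time ends up depending only on $\|(u_0,n_0,n_1)\|_{\mathbf H^{0,-1/2}}$ and $\|u_0\|_{L^2}$ and scaling like $\min\{\langle R\rangle^{-2}r^{-2},1\}$, the $r^{-2}$ reflecting the $L^2$-criticality of the cubic term and the $\langle R\rangle$ the wave data and the low-frequency part of $u$.

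**Closing the contraction and concluding.** Granting the two multilinear estimates and the linear (homogeneous and Duhamel) estimates in $\mathcal{S}$, $\mathcal{W}_\pm$, I define the solution map $\Phi$ on the product space $\mathbf{X}_T := \{(u,n_+,n_-) : \|u\|_{\mathcal{S}} \le \delta_1,\ \|n_\pm\|_{\mathcal{W}_\pm} \le \delta_2\}$ for suitable radii $\delta_1 \sim r$, $\delta_2 \sim R$ and $T$ as in the statement, and check that $\Phi$ maps this ball to itself and is a contraction: the self-mapping uses $\|\Phi_u\|_{\mathcal{S}} \lesssim \|u_0\|_{L^2} + T^{\theta}\|n\|_{\mathcal{W}}\|u\|_{\mathcal{S}}$ and $\|\Phi_{n_\pm}\|_{\mathcal{W}_\pm} \lesssim \|(n_0,n_1)\|_{H^{-1/2}\times H^{-3/2}} + T^{\theta}\|u\|_{\mathcal{S}}^2$, and the smallness of $T$ absorbs the nonlinear terms; the contraction estimate is identical after polarizing the quadratic and cubic expressions. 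The embedding $\mathbf{X}_T \hookrightarrow \mathbf{X}_T^{0,-1/2}$ (continuity in time with values in the stated Sobolev spaces) follows from the standard transfer principle for $X^{s,b}$ spaces with $b>\tfrac12$ (or the $V^2$-analogue), and undoing the substitution $n = \tfrac12(n_++n_-)$ returns a solution of \eref{eq:zs}. Uniqueness holds in the ball $\mathbf{X}_T$ by the contraction property, and the locally Lipschitz dependence on the data is read off from the same estimates applied to differences. The main obstacle I anticipate is \emph{not} the abstract contraction scheme but the Schr\"odinger estimate $\|nu\|_{\mathcal{S}'}\lesssim\|n\|_{\mathcal W}\|u\|_{\mathcal S}$ at the scaling-critical regularity: the high-high-to-low and high-low interactions where the output frequency is small force the use of the refined (Besov- or $U^2$-type) norm and a delicate case analysis of which modulation is largest, and it is there that the endpoint nature of $L^2\times H^{-1/2}\times H^{-3/2}$ really bites.
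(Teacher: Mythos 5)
Your overall scheme (reduction to a first-order system, Bourgain-type spaces adapted to the paraboloid and the cones, black-boxed bilinear estimates, contraction) is the same skeleton as the paper's Sections 3--5. But there is a genuine gap at the one place where the statement is quantitative: the lifespan. The contraction argument alone does \emph{not} produce $T\sim\min\{\la R\ra^{-2}r^{-2},1\}$. With your radii $\delta_1\sim r$, $\delta_2\sim R$, the self-mapping condition coming from the Schr\"odinger Duhamel term, $T^{\theta}\,\delta_2\,\delta_1\ls\delta_1$, forces $T^{\theta}R\ls 1$, i.e.\ $T\ls R^{-1/\theta}$ (in the paper $\theta=\frac14$, giving $T\sim R^{-4}$). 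Since $r\leq R$, this is strictly shorter than $R^{-2}r^{-2}$ whenever $r\ll R$, and no choice of radii fixes this: the wave norm of size $R$ enters linearly in the Schr\"odinger equation and cannot be absorbed by $T^{\theta}$ at the claimed timescale. The paper closes this gap with a second, separate argument (adapted from Colliander--Holmer--Tzirakis): run the contraction on intervals of length $\delta\sim R^{-4}$, use the conservation of $\|u(t)\|_{L^2}$ so that the Schr\"odinger datum never grows, and track the growth of $\|v(t)\|_{H^{-1/2}}$ through the Duhamel formula — it increases by only $C\delta^{1/4}\|u_0\|_{L^2}^2$ per step because the wave nonlinearity depends on $u$ alone. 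Iterating until the wave norm doubles yields $m\delta\sim\min\{R^{-2}\|u_0\|_{L^2}^{-2},1\}$. Without this bootstrap your proof establishes a weaker theorem ($T\sim R^{-4}$), and the optimality claim against the Glangetas--Merle blow-up time $cR^{-2}$ is lost.

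Two smaller points. First, your reduction $n_\pm=n\mp i|\nabla|^{-1}\partial_t n$ is ill-suited to the data space: $|\nabla|^{-1}n_1$ need not lie in $H^{-1/2}$ for $n_1\in H^{-3/2}$ because of the singularity at $\xi=0$; the paper uses $\la\nabla\ra^{-1}$ instead, accepting the harmless extra linear term $\la\nabla\ra^{-1}\Re v$ in the $v$-equation. Second, while you are entitled to black-box the multilinear estimates, be aware that at this regularity they are not a consequence of $L^4$ Strichartz plus standard bilinear refinements alone: the high-high transverse interactions require a dyadic angular Whitney decomposition and a nonlinear Loomis--Whitney (convolution-of-surface-measures) estimate, and the spaces are $\ell^1$-Besov in modulation with exponent $\frac{5}{12}$ rather than $U^2/V^2$ modifications.
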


\begin{remark}
Note that a-priori the nonlinear system \eref{eq:zs} is not well-defined for rough distributions.
The precise notion of \emph{solution} in Theorem \ref{main} is explained in Section \ref{sect:reduced}.
The auxiliary space $\mathbf{X}_T$ is based on generalized Fourier restriction spaces.
\end{remark}

\begin{remark}
Notice that Theorem \ref{main} implies in particular that locally the flow map for smooth data
extends continuously to initial data in $\mathbf{H}^{0,-\frac12}$.  The uniqueness claim in Theorem \ref{main} is restricted to the subspace $\mathbf{X}_T$ of $X_T^{0,-\frac12}$, which ensures that $(u,n)$ is the unique limit of smooth solutions.
\end{remark}

Local well-posedness of \eref{eq:zs} in the low-regularity setting
has been previously considered by many authors: Bourgain--Colliander
\cite{bourgain_wellposedness_1996} proved local well-posedness in spaces which comprise the energy space
and established global well-posedness in the energy space under a smallness condition. The local result has been improved later by Ginibre--Tsutsumi--Velo \cite{ginibre_cauchy_1997}. Both aforementioned approaches are based on the Fourier
restriction norm method. For previous well-posedness results
we refer the reader to the references in \cite{bourgain_wellposedness_1996,ginibre_cauchy_1997}. In
\cite{ginibre_cauchy_1997} Ginibre--Tsutsumi--Velo obtain local well-posedness of
\eref{eq:zs} in the case of space dimension $d=2$ in the space $H^k\times H^\ell\times
H^{\ell-1}$ for $(k,\ell)$ confined to the strip $\ell \geq 0$,
$2k\geq \ell +1$. The optimal corner of this strip occurs at
$H^\frac12\times L^2\times H^{-1}$, one-half a derivative away from
the result in Theorem \ref{main}.

One motivation for considering the space $L^2\times H^{-1/2}\times
H^{-3/2}$ is the connection to the cubic nonlinear Schr\"odinger
equation in two spatial dimensions
\begin{equation}\label{eq:nls}
 i\partial_t u + \Delta u + |u|^2u=0.
\end{equation}
Consider the Zakharov system with wave speed $\lambda>0$:
\begin{equation}\eqalign{
     i\partial_t u + \Delta u = nu, \cr
     \frac{1}{\lambda^2}\partial_t^2 n - \Delta n = \Delta |u|^2.}\label{eq:zs-speed}
 \end{equation}
 
Then formally \eref{eq:zs-speed} converges to \eref{eq:nls} as $\lambda \to
\infty$ in the sense that for fixed initial data $u_\lambda \to u$,
where $(u_\lambda, n_\lambda)$ solves \eref{eq:zs-speed} and $u$
solves \eref{eq:nls} with the same initial data.  Rigorous results
of this type in a high regularity setting were obtained by Schochet--Weinstein \cite{schochet_nonlinear_1986},
Added--Added \cite{added_equations_1988}, Ozawa--Tsutsumi
\cite{ozawa_nonlinear_1992}, see also the recent work by Masmoudi--Nakanishi \cite{masmoudi_energy_2008} on this issue in 3d.

Local well-posedness in $L^2$ of \eref{eq:nls} was obtained by
Cazenave--Weissler \cite{cazenave_cauchy_1990}.  However, in this
version of well-posedness, the time interval of existence depends
directly upon the initial data, not just on the $L^2$ norm of the
initial data. Indeed, via the pseudoconformal transformation, it can
be shown that a result giving the maximal time of existence in terms
of the $L^2$ norm alone is not possible\footnote{Note that
  Killip-Tao-Visan \cite{killip_cubic_2008} have recently obtained
  global well-posedness for \eref{eq:nls} if $u_0\in L^2$ is radial
  and $\|u_0\|_{L^2}<\|Q\|_{L^2}$, see \eref{eq:gs}.}.

\begin{remark}
  Our result gives local well-posedness of \eref{eq:zs-speed} with a time of existence
  depending on the $L^2$ norm of $u_0$, but also on the $H^{-1/2}\times
  H^{-3/2}$ norm of the wave data $(n_0,n_1)$ as well as the wave
  speed. Indeed, this claim follows by combining the rescaling
  \[
  u_{\lambda}(t,x)=\lambda u(\lambda^2 t,\lambda x), \quad   v_{\lambda}(t,x)=\lambda^2 v(\lambda^2 t, \lambda x)
  \]
  and Theorem \ref{main}. However, note that the lower bound on the maximal
  time of existence obtain by this method tends to zero as the wave speed goes to
  infinity.
\end{remark}

Global well-posedness of \eref{eq:zs} is known for initial data in
the energy space $H^1\times L^2\times H^{-1}$ with $\|u_0\|_{L^2}\leq
\|Q\|_{L^2}$, see \cite{bourgain_wellposedness_1996,
  glangetas_concentration_1994}; see also \cite{colliander_regularity_2002} regarding bounds on higher order Sobolev norms. Recently, the imposed regularity
assumption has been slightly weakened in \cite{fang_low_2008}. Here,
$Q$ is the ground state solution for \eref{eq:nls},
i.e. $Q$ is the unique  solution to
\begin{equation}\label{eq:gs}
  -Q+\Delta Q+ |Q|^2Q=0, \; Q>0,\; Q(x)=Q(|x|), \; Q\in \mathcal{S}(\R^2)
\end{equation}
of minimal $L^2$ mass. This gives rise to a blow-up solution of
\eref{eq:nls} by the pseudoconformal transformation.  This idea is
exploited in \cite{glangetas_existence_1994}, where Glangetas--Merle
construct a family of blow-up solutions for \eref{eq:zs} of the form
\begin{equation}
  \eqalign{
    u(t,x)=\frac{\omega}{T-t}e^{i\left(\theta+\frac{\omega^2}{T-t}-\frac{|x|^2}{4(T-t)}\right)}
    P_\omega\left(\frac{x\omega}{T-t}\right)\cr
    n(t,x)=\left(\frac{\omega}{T-t}\right)^2 N_\omega\left(\frac{x
        \omega}{T-t}\right)}\label{eq:blowup_sol}
\end{equation}
for parameters $\theta \in \mathbb{S}^1$, $T>0$, and $\omega\gg 1$,
such that $P_{\omega}\in H^1$ is smooth and radially symmetric, $N_{\omega}\in L^2$ is a radially symmetric
Schwartz function, and $(P_{\omega},N_{\omega})\to (Q,-Q^2)$ in
$H^1\times L^2$ as $\omega \to \infty$. In particular, this implies
the necessity of the smallness assumption $\|u_0\|_{L^2}\leq
\|Q\|_{L^2}$ for any global existence result for \eref{eq:zs}.

We prove Theorem \ref{main} by the contraction method in a suitably
defined Fourier restriction norm space, which gives a certain lower
bound on the time of existence. By adapting the
argument of Colliander--Holmer--Tzirakis \cite{colliander_low_2008}
using the $L^2$ conservation of $u(t)$ and iteration, we are able to
show that this time can in fact be extended to the longer lifespan given in
Theorem \ref{main}. In summary, the time of existence we obtain is
based on
\begin{enumerate}
\item sharp multilinear estimates
\item the $L^2$ conservation law for the Schr\"odinger part.
\end{enumerate}
Reviewing the solutions \eref{eq:blowup_sol} constructed by
Glangetas--Merle we observe that Theorem \ref{main} contains the
optimal\footnote{up to the implicit multiplicative constant} lifespan
for Schr\"odinger data with fixed $L^2$ norm larger than the ground
state mass.
\begin{theorem}[follows from \cite{glangetas_concentration_1994,glangetas_existence_1994}]
\label{thm:blowup}
For each $r>\|Q\|_{L^2}$ there exists $c>0$ such that for every $R\geq
r$ there exists a smooth solution $(u,n)$ with initial datum
$(u(0),n(0),\partial_t n(0))\in \mathbf{H}^{0,-\frac12}_{R,r}$ which
blows up at time $T:=cR^{-2}$, i.e.
\begin{equation}\label{eq:blowup}
\|n(t)\|_{H^{-\frac{1}{2}}}+\|\partial_tn(t)\|_{H^{-\frac{3}{2}}} \to \infty \quad (t \to T).
\end{equation}
\end{theorem}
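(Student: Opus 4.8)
The plan is to start from the explicit Glangetas–Merle family \eref{eq:blowup_sol} and track how the $\mathbf{H}^{0,-\frac12}$ norm of the data and the blow-up time scale under the free parameters. Fix a parameter $\omega\gg 1$ large enough that $(P_\omega,N_\omega)$ is within $\varepsilon$ of $(Q,-Q^2)$ in $H^1\times L^2$, so that in particular $\|u(0)\|_{L^2}=\|\omega T^{-1}P_\omega(\omega\,\cdot/T)\|_{L^2}=\|P_\omega\|_{L^2}$ is independent of $T$ and as close to $\|Q\|_{L^2}$ as we wish; in fact, since the $L^2$ norm of the Schrödinger profile \eref{eq:blowup_sol} is scale-invariant, this exactly pins down $r$: given the target $r>\|Q\|_{L^2}$, choose $\omega$ with $\|P_\omega\|_{L^2}=r$ (possible by continuity once $r$ is close enough to $\|Q\|_{L^2}$; for larger $r$ one first rescales, see below). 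The constant $c$ in the statement will come out of the relation between $T$ and the wave-data norm.

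Next I would compute the wave-data norms at $t=0$. Writing $n(0,x)=\omega^2T^{-2}N_\omega(\omega x/T)$, a change of variables gives $\|n(0)\|_{H^{-1/2}}^2=\int \la\xi\ra^{-1}|\widehat{n(0)}(\xi)|^2\,d\xi$ with $\widehat{n(0)}(\xi)=\omega^2T^{-2}(T/\omega)^2\widehat{N_\omega}(T\xi/\omega)=\widehat{N_\omega}(T\xi/\omega)$, so after substituting $\eta=T\xi/\omega$ one finds $\|n(0)\|_{H^{-1/2}}^2=(\omega/T)\int \la (\omega/T)\eta\ra^{-1}|\widehat{N_\omega}(\eta)|^2\,d\eta$, which for $T\ls\omega$ behaves like $(\omega/T)^{2}$ times a bounded quantity (the weight $\la(\omega/T)\eta\ra^{-1}\sim (T/\omega)\la\eta\ra^{-1}$ on the bulk of the mass of the Schwartz function $\widehat N_\omega$). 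Hence $\|n(0)\|_{H^{-1/2}}\sim (\omega/T)^{1/2}\,\|N_\omega\|_{\dot H^{-1/2}}$ — more precisely it is comparable to $\omega/T$ up to constants depending on $\omega$ only — and similarly $\partial_t n(0,x)$ has size $\sim (\omega/T)^2\cdot(\text{fixed profile})$, giving $\|\partial_t n(0)\|_{H^{-3/2}}\sim (\omega/T)^{3/2}$ times an $\omega$-dependent constant, again bounded below for $T\ls\omega$. The upshot is that for $T\leq 1\ls\omega$ the full data norm satisfies $\|(u(0),n(0),\partial_tn(0))\|_{\mathbf{H}^{0,-\frac12}}\sim_\omega T^{-1}$, i.e.\ it is comparable to $R$ precisely when $T\sim_\omega R^{-1}$. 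Wait — this does not match the claimed $T=cR^{-2}$, so the remaining rescaling is essential: one applies the natural scaling $u\mapsto \mu u(\mu^2 t,\mu x)$, $n\mapsto \mu^2 n(\mu^2t,\mu x)$ of \eref{eq:zs} (which preserves the $L^2$ norm of $u$, hence keeps $r$ fixed), and checks that under it $\|n_0\|_{H^{-1/2}}\mapsto \mu^{3/2}\|n_0\|_{H^{-1/2}}$-type factors combine so that the total $\mathbf{H}^{0,-\frac12}$-norm scales like $\mu^{3/2}$ while the blow-up time scales like $\mu^{-2}$; eliminating $\mu$ then yields the relation $T\sim R^{-4/3}$... which still is not $R^{-2}$. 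I would therefore recompute carefully: the correct reconciliation is that the dominant term in the data norm is $\|\partial_t n(0)\|_{H^{-3/2}}\sim(\omega/T)^{3/2}$ (the $\partial_t n$ component, at regularity $H^{-3/2}$, scales worst), so $R\sim_\omega T^{-3/2}$... Let me instead present the argument in the clean, scaling-invariant way below.

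Concretely, I would organize it as follows. \emph{Step 1.} Record the three scaling symmetries of \eref{eq:zs}: the intrinsic parabolic scaling, the pseudoconformal-type parameter $T$ in \eref{eq:blowup_sol}, and the overall amplitude $\omega$. \emph{Step 2.} Show that along the Glangetas–Merle family, $\|u(0)\|_{L^2}$ depends only on $\omega$ and that $\omega\mapsto \|P_\omega\|_{L^2}$ is continuous with $\|P_\omega\|_{L^2}\to\|Q\|_{L^2}$, so for any $r>\|Q\|_{L^2}$ sufficiently close we may fix $\omega=\omega(r)$ with $\|u(0)\|_{L^2}=r$ exactly, and for larger $r$ we first dilate amplitude (which does change $\|u_0\|_{L^2}$ continuously to cover all $r>\|Q\|_{L^2}$) — this handles the constraint $\|u_0\|_{L^2}\leq r$; and in fact we get equality, which is fine. \emph{Step 3.} With $\omega$ now fixed, compute $\|(n(0),\partial_t n(0))\|_{H^{-1/2}\times H^{-3/2}}$ as an explicit increasing function $F_\omega(1/T)$ of $1/T$ with $F_\omega(s)\to\infty$ as $s\to\infty$ and $F_\omega$ continuous; combined with the fixed $\|u_0\|_{L^2}=r$ this gives $\|\text{data}\|_{\mathbf{H}^{0,-\frac12}}=\max\{r,F_\omega(1/T)\}$ roughly, an onto continuous function of $T\in(0,\infty)$ taking every value in $[r,\infty)$. \emph{Step 4.} Given $R\geq r$, solve $\|\text{data}\|_{\mathbf{H}^{0,-\frac12}}= R$ for $T=T(R)$, read off from the homogeneity of $F_\omega$ that $T(R)= c(\omega) R^{-2}$ — here is where the precise exponent must be extracted from the change-of-variables computation in Step 3, and getting the power of $R$ to come out to exactly $-2$ is the one place to be careful — and set $c:=c(\omega(r))>0$, which depends only on $r$ as required. \emph{Step 5.} Finally verify the blow-up assertion \eref{eq:blowup}: as $t\to T$, the profiles in \eref{eq:blowup_sol} concentrate, $(T-t)\to 0^+$, and the same change-of-variables estimate as in Step 3 (with $T-t$ in place of $T$, up to the translation of the singularity) shows $\|n(t)\|_{H^{-1/2}}+\|\partial_t n(t)\|_{H^{-3/2}}\sim F_\omega(1/(T-t))\to\infty$.

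The main obstacle is bookkeeping Step 3/Step 4: matching the scaling so that the lifespan genuinely reads $cR^{-2}$ rather than some other negative power of $R$. This is not deep — it is forced by dimensional analysis once one notes that $\mathbf{H}^{0,-\frac12}$ is, by the discussion preceding Theorem \ref{main}, the scaling-critical space for the wave part — but it requires writing out the Fourier-side change of variables for the negative-order Sobolev norms carefully and confirming that the $\partial_t n$ component in $H^{-3/2}$ is the one dictating the relation $R\sim T^{-1}$ after the amplitude is frozen, equivalently that all three components are comparable under the combined scaling. Everything else (continuity and surjectivity of the norm as a function of the parameters, the limit $(P_\omega,N_\omega)\to(Q,-Q^2)$, the Schwartz decay needed to make the integrals finite) is quoted directly from \cite{glangetas_concentration_1994,glangetas_existence_1994}.
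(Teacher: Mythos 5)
Your overall strategy --- take the Glangetas--Merle solutions \eref{eq:blowup_sol}, fix $\omega$ large so that $\|P_\omega\|_{L^2}<r$, and read off the blow-up time from the scaling of the wave-data norms --- is exactly the paper's. But the single computation that carries the whole proof is never completed correctly: your three attempts give mutually inconsistent answers ($R\sim_\omega T^{-1}$, then $T\sim R^{-4/3}$, then $R\sim_\omega T^{-3/2}$), none of which yields the claimed $T\sim R^{-2}$, and you explicitly defer "getting the power of $R$ to come out to exactly $-2$" to an unexecuted Step 3/4. The source of the error is the Fourier-side change of variables in dimension two. Writing $\lambda=\omega/(T-t)$, you correctly have $\widehat{n(t)}(\xi)=\widehat{N_\omega}(\xi/\lambda)$, but substituting $\eta=\xi/\lambda$ gives the Jacobian $d\xi=\lambda^{2}\,d\eta$, not $\lambda\,d\eta$; hence $\|n(t)\|_{H^{-1/2}}^2=\lambda^{2}\int\la\lambda\eta\ra^{-1}|\widehat{N_\omega}(\eta)|^2\,d\eta\sim\lambda^{2}\cdot\lambda^{-1}\|N_\omega\|_{\dot{H}^{-1/2}}^2$, i.e. $\|n(t)\|_{H^{-1/2}}\sim_\omega(T-t)^{-1/2}$, not $(T-t)^{-1}$. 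Likewise $\partial_t n(t)=(\lambda^{3}/\omega)\,M_\omega(\lambda\,\cdot)$ with $M_\omega=2N_\omega+y\cdot\nabla N_\omega$, and the 2d identity $\|g(\lambda\,\cdot)\|_{\dot{H}^{s}}=\lambda^{s-1}\|g\|_{\dot{H}^{s}}$ gives $\|\partial_t n(t)\|_{H^{-3/2}}\sim(\lambda^{3}/\omega)\,\lambda^{-5/2}\sim_\omega(T-t)^{-1/2}$ as well (the low-frequency divergence of $\dot{H}^{-3/2}$ in 2d is harmless because $\int M_\omega=0$). Thus both wave components, each measured at its own regularity, scale identically like $(T-t)^{-1/2}$: at $t=0$ this forces $R\sim_\omega T^{-1/2}$, hence $T=cR^{-2}$, and as $t\to T$ it gives \eref{eq:blowup}. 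This is precisely the paper's one-line assertion $\|n(t)\|_{H^{-1/2}}+\|\partial_t n(t)\|_{H^{-3/2}}\sim|T-t|^{-1/2}$; no auxiliary parabolic rescaling is needed.

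A secondary problem is Step 2: the "amplitude dilation" you invoke to arrange $\|P_\omega\|_{L^2}=r$ exactly does not exist as a symmetry of \eref{eq:zs} --- the parabolic scaling preserves $\|u_0\|_{L^2}$ in two dimensions, as you yourself note --- and Glangetas--Merle only give $\|P_\omega\|_{L^2}\to\|Q\|_{L^2}$, not surjectivity onto $(\|Q\|_{L^2},\infty)$. Fortunately equality is not required: the theorem asks only $\|u_0\|_{L^2}\le r$, which the choice $\|P_\omega\|_{L^2}<r$ already provides, with $\|u(t)\|_{L^2}$ constant in $t$.
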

The absence of the $L^2$ norm of $u$ in \eref{eq:blowup} is due to the $L^2$ conservation law. We refer the reader to \cite{glangetas_concentration_1994,glangetas_existence_1994} for further properties of the blow-up solutions such as $L^2$ norm concentration for $u$.
Finally, we state a result which shows the optimality of the imposed regularity assumptions in Theorem \ref{main}.
\begin{theorem}\label{thm:sharp}
  Assume there exists $0<r\leq R$ and $T>0$ such that the flow map
  $u_0\mapsto u$ for smooth data extends continuously to a map
$$
\mathbf{H}^{k,\ell}_{r,R} \to \mathbf{X}_{T}^{k,\ell}
$$
for some $\ell-2k+\frac12>0$ or $\ell<-\frac12$. Then this map
fails to be $C^2$ at the origin with respect to these norms.
\end{theorem}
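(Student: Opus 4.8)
The standard strategy is to contradict continuity of the flow map at the origin by exhibiting a family of initial data for which the second-order term in the power-series expansion of the solution is unbounded in the target norm, even though the data themselves are small. First I would recall the Picard iteration / Duhamel expansion of the flow. Writing $u = u^{(1)} + u^{(2)} + \dots$ where $u^{(1)} = e^{it\Delta}u_0$ is the linear Schr\"odinger evolution and $n = n_{\mathrm{lin}} + n^{(2)} + \dots$, with $n_{\mathrm{lin}}$ the free wave evolution of $(n_0,n_1)$, the first nontrivial Schr\"odinger correction is
\begin{equation*}
 u^{(2)}(t) = -i\int_0^t e^{i(t-s)\Delta}\big(n_{\mathrm{lin}}(s)\,u^{(1)}(s)\big)\,ds,
\end{equation*}
and the first wave correction is
\begin{equation*}
 n^{(2)}(t) = \int_0^t \frac{\sin((t-s)|\nabla|)}{|\nabla|}\,\Delta|u^{(1)}(s)|^2\,ds.
\end{equation*}
If the flow map were $C^2$ at the origin with values in $\mathbf{X}_T^{k,\ell}$, then $u^{(2)}$ (resp. $n^{(2)}$) would have to be bounded in $C([0,T];H^k)$ (resp. $C([0,T];H^\ell)$) by $C\,\|(u_0,n_0,n_1)\|_{\mathbf{H}^{k,\ell}}^2$, uniformly as the data tend to $0$. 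So the whole problem reduces to falsifying these two bilinear estimates. The regime $\ell - 2k + \tfrac12 > 0$ will be handled by probing $n^{(2)}$ (the $|u|^2$ interaction feeding the wave), and the regime $\ell < -\tfrac12$ by probing $u^{(2)}$ (the $nu$ interaction feeding the Schr\"odinger part), since in that range the wave data at low frequency are too rough to be absorbed.

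For the first regime I would take $n_0 = n_1 = 0$ and choose $u_0$ a sum of two Knapp-type wave packets, or more simply two modulated bumps $\widehat{u_0} = \mathbf 1_A + \mathbf 1_{A+\xi_0}$ where $A$ is a small box near a frequency of size $N$ and $\xi_0$ is a separation vector. The product $|u^{(1)}|^2$ then has Fourier support containing a piece near the spatial frequency $\xi_0$ with $|\xi_0|\sim 1$ (the difference interaction), while the relevant space-time frequency lands close to the null cone $|\tau| = |\xi|$ of the wave operator; this near-resonance produces a secular $t$-growth (a factor $\sim t$ from the $ds$ integral) which is exactly what kills estimates on the cone. Carrying out the computation, $\|n^{(2)}(t)\|_{H^\ell}$ picks up the low-frequency weight $\la \xi_0\ra^\ell |\xi_0|^{?}$ from the $\Delta$ and the $|\nabla|^{-1}$, together with a factor measuring the size of $A$; balancing these against $\|u_0\|_{H^k}^2 \sim N^{2k}|A|$ and sending $N\to\infty$ (or shrinking $|A|$) shows the ratio diverges precisely when $\ell - 2k + \tfrac12 > 0$. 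For the second regime, symmetrically, I would take $u_0$ a single bump at frequency $N$ and $(n_0,n_1)$ concentrated at a \emph{low} frequency $\lambda\to 0$ (so that $\|(n_0,n_1)\|_{H^\ell\times H^{\ell-1}}$ is controlled by a negative power of $\lambda$, i.e. blows up slowly, but can be normalized); the product $n_{\mathrm{lin}} u^{(1)}$ then has frequency $\approx N$ and the Duhamel integral is again nearly resonant because the wave is slowly varying, giving $\|u^{(2)}(t)\|_{H^k}\gtrsim t\,\lambda^{-1}\|n_0\|\cdot\|u_0\|$ modulo constants, and the obstruction appears exactly for $\ell < -\tfrac12$. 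Concretely I expect the two counterexamples to be essentially those already known for the Zakharov system (Holmer, and Ginibre--Tsutsumi--Velo's sharpness discussion) adapted to $d=2$ and to the space $\mathbf{X}_T^{k,\ell}$.

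The main obstacle, and the only genuinely delicate point, is that the failure of $C^2$-regularity must be shown with respect to the \emph{norm of the auxiliary space} $\mathbf{X}_T^{k,\ell}$, not merely the energy norm $C([0,T];\mathbf{H}^{k,\ell})$: one has to verify that the bad bilinear interaction term is in fact small in whatever Fourier-restriction components make up $\mathbf{X}_T^{k,\ell}$ for the \emph{input} data side (so that the hypothesized continuous extension really does force the bilinear bound), while being large in the $C_tH^k$ or $C_tH^\ell$ part for the \emph{output}. Since $\mathbf{X}_T^{k,\ell}\subset C([0,T];\cdot)$ by construction, the output side is automatic; the input side requires checking that our explicit data, after applying the free flow, have the claimed $X^{s,b}$-type norms — a routine but necessary computation. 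Once that bookkeeping is done, letting the frequency parameter go to infinity (or to zero) yields the divergence of the ratio $\|u^{(2)}\|/\|\text{data}\|^2$ and hence the failure of $C^2$ regularity, completing the proof.
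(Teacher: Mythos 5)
Your high-level strategy --- reducing the failure of $C^2$ regularity to unboundedness of the second Picard iterate and testing it on frequency-localized data --- is exactly the paper's (Propositions \ref{prop:unbounded_der1} and \ref{prop:unbounded_der2}). However, both concrete frequency configurations you propose are in the wrong regime and would not detect the stated thresholds. For the wave component you separate the two Schr\"odinger bumps by $|\xi_0|\sim 1$, so the output of $|u^{(1)}|^2$ sits at wave frequency $O(1)$, where the weight $\la \xi\ra^{\ell}\sim 1$; the parameter $\ell$ then never enters your ratio, which comes out as roughly $t\,|A|^{1/2}N^{-2k}$ and cannot reproduce the condition $\ell-2k+\frac12>0$. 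The obstruction actually lives at \emph{high} wave frequency: one must take the two Schr\"odinger frequencies nearly antipodal, centred at $N+1$ and $-N$, so that the output wave frequency is $2N+1$ and the phase $\la\xi\ra-|\eta|^2+|\xi-\eta|^2\approx(2N+1)-(N+1)^2+N^2=0$ vanishes; only then does the weight $N^{\ell}$ appear and the ratio become $N^{\ell-2k+\frac12}$.

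Symmetrically, for the Schr\"odinger component you place the wave data at frequency $\lambda\to 0$. Since $\mathbf{H}^{k,\ell}$ is built from \emph{inhomogeneous} Sobolev spaces, $\|n_0\|_{H^{\ell}}\sim\|n_0\|_{L^2}$ for any data supported at frequencies $\ls 1$, for every $\ell$; low-frequency wave data therefore cannot distinguish $\ell<-\frac12$ from $\ell\geq-\frac12$ (and the factor $\lambda^{-1}$ you claim for $\|u^{(2)}\|_{H^k}$ has no source, as the nonlinearity $nu$ contains no inverse derivatives). The correct configuration again exploits the high-frequency resonance: wave data at frequency $2N+1$, Schr\"odinger input near $-N$, output near $N+1$, where $|\xi|^2-|\eta|^2-|\xi-\eta|\approx 0$, giving the ratio $N^{-\ell-\frac12}$. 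One point you worry about is actually vacuous: $\mathbf{X}_T^{k,\ell}$ is by definition just $C_tH^k\times C_tH^{\ell}\times C^1_tH^{\ell-1}$ (see \eref{eq:x-norm}), so there are no auxiliary Fourier-restriction norms to verify on either side; what is needed instead, and what the paper supplies, is the transfer between the original and the reduced system from Section \ref{sect:reduced}.
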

The rest of the paper is organized as follows: In Section \ref{sect:not}, we set up the notation and introduce function spaces which we will use in the sequel. In Section \ref{sect:reduced} we outline the standard procedure (cp. \cite{ginibre_cauchy_1997}) for reducing \eref{eq:zs} to a first order (in time) system.
Section \ref{sect:mult-est} is devoted to the crucial multilinear estimates which are the main ingredients in the proof of Theorem \ref{main}. Section \ref{sect:proof} contains estimates for the linear group and the conclusion of the proof of Theorem \ref{main}. The counterexamples which lead to the sharpness result of Theorem \ref{thm:sharp} are constructed in Section \ref{sect:counter}, along with a proof of Theorem \ref{thm:blowup} (which is based on the results from \cite{glangetas_concentration_1994,glangetas_existence_1994}). In the Appendix we give an alternative proof of Proposition \ref{prop:trans_low_mod} which keeps the paper self-contained.
\ack
The authors would like to thank James Colliander, Hartmut Pecher and the referees for their helpful comments.

The first author has been supported by NSF grant DMS0738442.  The
second author has been supported by NSF grant DMS0354539.  The third
author has been partially supported by an NSF postdoctoral fellowship.
The fourth author acknowledges support from NSF grant DMS0801261.

\section{Notation and function spaces}\label{sect:not}
\noindent

We write $A\ls B$ if there is a harmless constant $c>0$ such that
$A\leq c B$. Moreover, we write $A\gs B$ iff $B\ls A$.  and $A\sim B$
iff $A\ls B$ and $A\gs B$. Throughout this work we will denote dyadic
numbers $2^n$ for $n \in \N$ by capital letters, e.g. $N=2^n,L=2^l,\ldots$.

Let $\psi\in C^\infty_0((-2,2))$ be an even, non-negative function
with the property $\psi(r)=1$ for $|r|\leq 1$. We use it to define a
partition of unity in $\R$,
\[
1 = \sum_{N \geq 1}\psi_N, \; \psi_1 = \psi, \;
\psi_N(r)=\psi\left(\frac{r}{N}\right)-\psi\left(\frac{2r}{N}\right),
\;  N=2^n \geq 2.
\]
Thus $\supp\psi_1\subset [-2,2]$ and $\supp{\psi}_N\subset
[-2N,-N/2]\cup[N/2,2N]$ for $N\geq 2$. For
$f:\R^2 \to \C$ we define the dyadic frequency localization operators
$P_N$ by
\begin{equation*}
  \mathcal{F}_x(P_N f)(\xi) =  \psi_N(|\xi|) \mathcal{F}_x f(\xi).
\end{equation*}
For $u:\R^2\times \R \to \C$ we define
$(P_N u)(x,t)=(P_N u(\cdot,t))(x)$. We will often write $u_N=P_Nu$
for brevity.  We denote the space-time Fourier support of $P_N$ by the
corresponding Gothic letter
\begin{eqnarray*}
  \mathfrak{P}_1=&\left\{(\xi,\tau) \in \R^2\times \R \mid |\xi|\leq 2 \right\},\\
  \mathfrak{P}_N=&\left\{(\xi,\tau) \in \R^2\times \R \mid N/2\leq
    |\xi|\leq 2N \right\}.
\end{eqnarray*}

Moreover, for dyadic $L\geq 1$ we define the modulation localization
operators
\begin{eqnarray}
  \mathcal{F} (S_L u)(\tau,\xi) &=  \psi_L(\tau+|\xi|^2)
\mathcal{F} u(\tau,\xi) \qquad \mbox{(Schr\"odinger case)}, \label{eq:s_mod}\\
  \mathcal{F} (W^\pm_L u)(\tau,\xi) &= \psi_L(\tau\pm |\xi|) \mathcal{F} u(\tau,\xi)  \qquad \mbox{(Wave case)},\label{eq:w_mod}
\end{eqnarray}
and the corresponding space-time Fourier supports
\begin{eqnarray*}
  \mathfrak{S}_1=&\left\{(\xi,\tau) \in \R^2\times \R \mid |\tau+|\xi|^2|\leq 2 \right\},\\
  \mathfrak{S}_L=&\left\{(\xi,\tau) \in \R^2\times \R \mid L/2\leq
    |\tau+|\xi|^2|\leq 2L \right\},
\end{eqnarray*}
respectively
\begin{eqnarray*}
  \mathfrak{W}^{\pm}_1=&\left\{(\xi,\tau) \in \R^2\times \R \mid |\tau \pm |\xi||\leq 2 \right\},\\
  \mathfrak{W}^{\pm}_L=&\left\{(\xi,\tau) \in \R^2\times \R \mid
    L/2\leq |\tau \pm |\xi||\leq 2L \right\}.
\end{eqnarray*}

We also define an equidistant partition of unity in $\R$,
\[
1 = \sum_{j \in \Z} \beta_j, \qquad \beta_j(s) = \psi(s-j) \left(
  \sum_{k \in  \Z} \psi(s-k) \right)^{-1}. 
\]
 Finally, for $A \in \N$ we define an equidistant partition of unity
on the unit circle,
\[
1 = \sum_{j = 0}^{A-1} \beta^A_j, \qquad 
\beta^A_j(\theta)=\beta_j\left(\frac{A\theta}{\pi}\right)+
\beta_{j-A}\left(\frac{A\theta}{\pi}\right)
\]
We observe that $\supp(\beta^A_j)\subset \Theta^A_j$, where
\begin{equation*}
  \Theta^A_j:=\left[\frac{\pi}{A}(j-2),\frac{\pi}{A}(j+2)\right]\cup \left[-\pi +\frac{\pi}{A}(j-2),-\pi +\frac{\pi}{A}(j+2)\right].
\end{equation*}
Next we introduce the angular frequency localization
operators $Q^A_j$,
\begin{equation*}
  \mathcal{F}_x(Q^A_j f)(\xi) =  \beta^A_j(\theta) \mathcal{F}_x f(\xi), \;\mbox{ where } \;\xi=|\xi|(\cos\theta,\sin\theta).
\end{equation*}
For $u:\R^2\times \R \to \C$, $(x,t)\mapsto u(x,t)$ we set $(Q^A_j
u)(x,t)=(Q^A_j u(\cdot,t))(x)$. These operators localize functions in
frequency to the sets
\begin{equation*}
  \mathfrak{Q}^A_j=\left\{(|\xi|\cos(\theta),|\xi|\sin(\theta),\tau) \in \R^2\times \R \mid  \theta \in \Theta^A_j\right\}.
\end{equation*}
For $A\in \N$ we can now decompose $u:\R^2\times\R\to \C$ as
\begin{equation*}
  u=\sum_{j=0}^{A-1} Q^{A}_j u.
\end{equation*}

Next we turn our attention to defining the spaces which play a crucial
role in our analysis. As explained in the introduction, for $k,\ell
\in \R$ and $T>0$ we define the space $\mathbf{X}^{k,\ell}_T$ as the
Banach space of all pairs of space-time distributions $(u,n)$
\begin{equation}
  \label{E:Xdef}
  \eqalign{
    u \in C( [0,T]; H^k(\mathbb{R}^2;\C)), \cr
    n \in C( [0,T]; H^\ell(\mathbb{R}^2;\R)) \cap C^1( [0,T];
    H^{\ell-1}(\mathbb{R}^2;\R)),}
\end{equation}
endowed with the standard norm defined via
\begin{equation}\label{eq:x-norm}
\fl \|(u,n)\|^2_{\mathbf{X}^{k,\ell}_T} = \|u\|_{L^\infty([0,T];H_x^k)}^2 + \|n\|_{L^\infty([0,T];H_x^{\ell})}^2 +  \|\partial_tn\|_{L^\infty([0,T];H_x^{\ell-1})}^2.
\end{equation}

Let $\sigma,b\in \R$, $1\leq p< \infty$. In connection to the operator
$i\partial_t + \Delta$ we define the Bourgain space $X^S_{\sigma,b,p}$
of all $u \in \mathcal{S}'(\R^2\times \R)$ for which the norm
$$
\|u\|_{X^S_{\sigma,b,p}}=\left(\sum_{N\geq
    1}N^{2\sigma}\left(\sum_{L\geq 1}L^{pb}
    \|S_LP_Nu\|_{L^2}^p\right)^{\frac2p}\right)^{\frac12}
$$
is finite. Similarly, to the half-wave operators $i\partial_t \pm
\langle \nabla\rangle$ we associate the Bourgain spaces $X^{W^\pm}_{\sigma,b,p}$ of
all $v \in \mathcal{S}'(\R^2\times \R)$ for which the norm
$$
\|v\|_{X^{W^\pm}_{\sigma,b,p}} = \left(\sum_{N\geq
    1}N^{2\sigma}\left(\sum_{L \geq 1}L^{pb}
    \|W^\pm_LP_Nu\|_{L^2}^p\right)^{\frac2p}\right)^{\frac12}
$$
is finite. For $p=\infty$ we modify the definition as usual:
\begin{eqnarray*}
  \|v\|_{X^{W^\pm}_{\sigma,b,\infty}} =& \left(\sum_{N\geq
      1}N^{2\sigma}\sup_{L\geq 1}L^{2b}
    \|W^\pm_LP_Nu\|_{L^2}^2\right)^{\frac12},\\
  \|u\|_{X^S_{\sigma,b,\infty}}=&\left(\sum_{N\geq
      1}N^{2\sigma}\sup_{L\geq 1}L^{2b}
    \|S_LP_Nu\|_{L^2}^2\right)^{\frac12}.
\end{eqnarray*}
In cases where the Schwartz space
$\mathcal{S}(\R^2\times \R)$ is not dense in $X^{W^\pm}_{\sigma,b,p}$ or
$X^{S}_{\sigma,b,p}$, respectively, we redefine the spaces and take the closure of $\mathcal{S}(\R^2\times \R)$ instead. Therefore, it is enough to prove all estimates in these spaces for Schwartz functions.

Notice that a change of $\tau \pm |\xi|$ to $\tau \pm \langle \xi \rangle$ in \eref{eq:w_mod} would lead to equivalent norms.

Finally, we define $X^W_{\sigma,b,p}$ similar to $X^{W\pm}_{\sigma,b,p}$ by means of replacing $\tau\pm |\xi|$ in \eref{eq:w_mod} by $|\tau|-|\xi|$.
$X^W_{\sigma,b,p}$ will only be used to describe the regularity for solutions of the full wave equation in Theorem \ref{main}.

For a normed space $B\subset \mathcal{S}'(\R^n\times \R;\C)$ of
space-time distributions we denote by $\overline{B}$ the space of
complex conjugates with the induced norm.

A calculation shows that $\overline{X^{W\pm}_{s,b,p}}=X^{W\mp}_{s,b,p}$.  By duality,
\begin{eqnarray}
  (\overline{X^S_{s,b,p}})^*&=X^{S}_{-s,-b,p'} \; ,\label{eq:dual1}\\
  (\overline{X^{W\pm}_{s,b,p}})^*&=X^{W\pm}_{-s,-b,p'}
  \;,\label{eq:dual2}
\end{eqnarray}
for $1\leq p<\infty$, $s,b \in \R$.

For $T>0$ we define the
space $B(T)$ of restrictions of distributions in $B$ to the set
$\R^n\times (0,T)$ with the induced norm
\[
\|u\|_{B(T)}=\inf \{ \| \tilde{u} \|_{B} : \; \tilde{u} \in B \mbox{ is an extension of } u \mbox{ to }\R^n \times \R\}.
\]

\section{The reduced system}\label{sect:reduced}
\noindent
For the Zakharov system there is a standard procedure to factor the wave operator in order to
derive a first order system. In this section we outline the approach described in \cite{ginibre_cauchy_1997}.

Suppose that $(u,n)$ is a sufficiently regular solution to \eref{eq:zs}.
We define $\langle \nabla \rangle = (1-\Delta)^\frac12$ and
$v=n + i \langle \nabla \rangle^{-1} \partial_t n$ and obtain the system
\begin{equation}\label{eq:zs-red}
    \eqalign{
      & i\partial_t u + \Delta u = (\Re v) u, \\
      & i\partial_t v - \langle \nabla \rangle v= -\frac{\Delta}{\langle \nabla\rangle} |u|^2 - \langle \nabla \rangle^{-1} \Re v.}
\end{equation}
Given a solution $(u,v)$ to \eref{eq:zs-red} with initial data $(u_0,v_0)$, we obtain a solution to the original
system \eref{eq:zs} by setting $n=\Re v$.

In the following sections we will study the system \eref{eq:zs-red} and prove a well-posedness result for this system since it is slightly more convenient to iterate the reduced system \eref{eq:zs-red} instead of \eref{eq:zs} for symmetry reasons.

We call a pair of distributions $(u,n)$ a solution to \eref{eq:zs} if
\begin{equation}\label{eq:nv}
(u,n + i \langle \nabla \rangle^{-1} \partial_t n)
\end{equation}
is a solution of \eref{eq:zs-red} in the sense of the integral
equation \eref{eq:sol}. The uniqueness class $\mathbf{X}_T$ in the
statement of Theorem \ref{main} can be chosen as all $(u,n)$ such that
$u \in X^S_{0,\frac12,1}(T)$, $n \in X^W_{-\frac12,\frac12,1}(T)$
and $\partial_t n\in X^W_{-\frac32,\frac12,1}(T)$, see  Section
\ref{sect:not} for definitions.

We now reformulate the statement of Theorem \ref{main} into a similar 
statement about the reduced system \eref{eq:zs-red}. 

From the above relation \eref{eq:nv} between $v$ and $n$ and the
definitions it follows that if $v \in X^{W+}_{-\frac12,\frac12,1}(T)$
is a solution to \eref{eq:zs-red} then we have $n = \Re{v} \in
X^{W}_{-\frac12,\frac12,1}(T)$; since $\partial_t n =\langle \nabla
\rangle \Im v$ it also follows that $\partial_t n \in
X^W_{-\frac32,\frac12,1}(T)$.  Conversely, if $n \in
X^{W}_{-\frac12,\frac12,1}(T)$ and $\partial_t n \in
X^W_{-\frac32,\frac12,1}(T)$ then a straightforward computation shows that
$v \in X^{W+}_{-\frac12,\frac12,1}(T)$.

The above considerations allow us to claim the statement of Theorem \ref{main} by 
a proving a similar statement about the reduced system \eref{eq:zs-red} with initial data $(u_0,v_0) \in L^2 \times H^{-\frac12}$.
Obviously, in the context of \eref{eq:zs-red} we adjust
the definition of $\mathbf{X}_T$ to $X^S_{0,\frac12,1}(T) \times X^{W+}_{-\frac12,\frac12,1}(T)$. 

We finish the section with a simple remark. According to the linear part of the equation of $v$ in \eref{eq:zs-red},
the corresponding $X^{W+}_{s,b,p}$ spaces should have been defined with the weight $\tau + \langle \xi \rangle$ instead
of $\tau + |\xi|$. However, a direct computation shows that the two spaces are the same. The reason behind it is
that we deal with local theory $T \leq 1$ and inhomogeneous norms. 

In the sequel of the paper we will restrict our attention to the reduced system \eref{eq:zs-red}.

\section{Multilinear estimates}\label{sect:mult-est}
\noindent
This section is devoted to the proof of the following Theorem.
\begin{theorem}\label{thm:tri}
  For all $0<T\leq 1$ and for all functions $u,u_1,u_2 \in
  X^S_{0,\frac{5}{12},1}(T)$ and $v\in
  X^{W+}_{-\frac12,\frac{5}{12},1}(T)$ the following estimates hold
  true:
  \begin{eqnarray}
    \| uv\|_{X^S_{0,-\frac{5}{12},\infty}(T)} \ls{} \|u\|_{X^S_{0,\frac{5}{12},1}(T)}\|v\|_{X^{W+}_{-\frac12,\frac{5}{12},1}(T)}\label{eq:sws1},\\
    \| u\bar{v}\|_{X^S_{0,-\frac{5}{12},\infty}(T)} \ls{} \|u\|_{X^S_{0,\frac{5}{12},1}(T)} \|v\|_{X^{W+}_{-\frac12,\frac{5}{12},1}(T)} \label{eq:sws2},\\
    \left\| \frac{\Delta}{\langle\nabla\rangle} ( u_1
    \bar{u}_2)\right\|_{X^{W+}_{-\frac12,-\frac{5}{12},\infty}(T)} \ls{} 
    \|u_1\|_{X^S_{0,\frac{5}{12},1}(T)}
    \|u_2\|_{X^S_{0,\frac{5}{12},1}(T)}.\label{eq:wss}
  \end{eqnarray}
\end{theorem}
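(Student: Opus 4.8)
The plan is to reduce the three estimates to a single dyadic bilinear $L^2$ estimate on the full space $\R^2\times\R$ and then resum. First, since the spaces $X(T)$ are defined by restriction, it suffices to prove \eref{eq:sws1}--\eref{eq:wss} with every $X(T)$ replaced by the corresponding space $X$ on $\R^2\times\R$: given near-optimal (Schwartz) extensions $\tilde u,\tilde v$ of $u,v$, the functions $\tilde u\tilde v$, $\tilde u\overline{\tilde v}$, $\frac{\Delta}{\langle\nabla\rangle}(\tilde u_1\overline{\tilde u}_2)$ extend the relevant restricted products, so the $T$-estimates follow by taking infima over extensions (and Schwartz functions are dense, so all products are classically defined). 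Next, \eref{eq:wss} is dual to \eref{eq:sws2}: by \eref{eq:dual2} its left-hand side is computed by pairing against the unit ball of $X^{W-}_{\frac12,\frac{5}{12},1}$, and moving $\frac{\Delta}{\langle\nabla\rangle}$ and one conjugation onto the test function converts the resulting trilinear form into the one controlled by \eref{eq:sws2}, using the mapping property of $\frac{\Delta}{\langle\nabla\rangle}$ from $X^{W-}_{\frac12,b,1}$ to $X^{W-}_{-\frac12,b,1}$ together with $\overline{X^{W-}_{s,b,p}}=X^{W+}_{s,b,p}$. Hence it is enough to prove \eref{eq:sws1} and \eref{eq:sws2}, and these two will be handled by one and the same argument, the only difference being the sign of a lower-order term in the resonance function, which is immaterial.

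For \eref{eq:sws2} write $u=\sum_{N_1,L_1}S_{L_1}P_{N_1}u$ and $\bar v=\sum_{N_2,L_2}W^-_{L_2}P_{N_2}\bar v$ (recall $\overline{X^{W+}}=X^{W-}$), and localize the output to $S_{L_0}P_{N_0}$. Because the output carries an $\ell^\infty$ and the inputs an $\ell^1$ modulation sum, and the frequency weights are polynomial, the whole estimate reduces to a dyadic bound
\[
\|S_{L_0}P_{N_0}\big(S_{L_1}P_{N_1}u\cdot W^-_{L_2}P_{N_2}\bar v\big)\|_{L^2}\ls B\,\|S_{L_1}P_{N_1}u\|_{L^2}\,\|W^-_{L_2}P_{N_2}\bar v\|_{L^2},
\]
where $B=B(N_0,N_1,N_2;L_0,L_1,L_2)$ must be such that $N_2^{1/2}L_0^{-5/12}L_1^{5/12}L_2^{5/12}B$ is geometrically summable in the modulations for fixed frequencies and suitably summable over the admissible frequency region. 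The Fourier supports force $N_0\ls\max(N_1,N_2)$ with the two largest comparable, and the algebraic identity
\[
(\tau_0+|\xi_0|^2)-(\tau_1+|\xi_1|^2)-(\tau_2-|\xi_2|)=2\,\xi_1\cdot\xi_2+|\xi_2|^2+|\xi_2|,
\]
valid on the Fourier support (with $\tau_0=\tau_1+\tau_2$, $\xi_0=\xi_1+\xi_2$), shows that $\max(L_0,L_1,L_2)$ dominates the resonance function $|2\xi_1\cdot\xi_2+|\xi_2|^2+|\xi_2||$. This resonance is of size $\sim N_1N_2$ for generic configurations and degenerates only on a thin, lower-dimensional set of near-resonant frequency configurations (the exact Schr\"odinger--wave resonances).

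If one of the three modulations is comparable to the resonance, hence $\gs$ a fixed positive power of the frequencies, the bound is the easy case: put that factor in $L^2$ and the remaining two in $L^4_{t,x}$ via the $L^4$ Strichartz inequalities for the Schr\"odinger and wave flows transferred to $X^S$, $X^{W\pm}$ (or, when one frequency is much smaller than the others, a bilinear $L^2$ Strichartz estimate), and absorb the $L_0^{-5/12}$ loss into the surplus power. The delicate regime is when all three modulations are small compared with the frequencies, so the interaction is concentrated near the intersection of the translated paraboloid $\{\tau=-|\xi|^2\}$ with the cone $\{\tau=|\xi|\}$. Here one invokes the transversal low-modulation estimate, Proposition \ref{prop:trans_low_mod}: decompose both inputs with the angular operators $Q^A_j$ for $A$ an appropriate power of the frequencies; on pairs of sufficiently separated sectors the two characteristic surfaces meet transversally and an $L^2$ bilinear estimate (almost orthogonality of the outputs plus a Jacobian lower bound) yields the required gain, while the near-parallel sectors fall back into the large-resonance case above. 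In the configurations with $N_0$ much smaller than $N_1\sim N_2$ (equivalently, low output frequency in \eref{eq:wss}) the derivative gain coming from $\frac{\Delta}{\langle\nabla\rangle}$, equivalently the negative-order wave norm, is exactly what makes the $N_0$-summation converge; this reflects the $L^2$-criticality of the Schr\"odinger data in two dimensions and is the step where I expect the real difficulty to lie.

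Finally, assemble the dyadic bounds. The modulation sums are immediate, since any positive power of the ratio $\min/\max$ of the $L$'s (or of the resonance), together with the $\ell^1$/$\ell^\infty$ bookkeeping, renders them geometric. The frequency sums split into the high--low$\to$high regime $N_0\sim N_{\max}\gg N_{\min}$, where the gain in $N_{\min}$ from the bilinear estimates (and the harmless $N_2^{1/2}$ weight) gives absolute convergence, and the high--high$\to$low regime $N_1\sim N_2\gg N_0$, where the smoothing discussed above does the same. Combining these pieces gives \eref{eq:sws2}; \eref{eq:sws1} follows by the identical argument, and \eref{eq:wss} follows from the duality reduction.
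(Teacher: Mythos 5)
Your architecture is the paper's own: reduce by duality to a single trilinear form with one wave and two Schr\"odinger factors (this is Proposition \ref{prop:trilinear}), decompose dyadically in frequency and modulation and angularly in Whitney fashion, handle the transversal low-modulation regime by a bilinear $L^2$ restriction estimate for transversal surfaces (Proposition \ref{prop:trans_low_mod}), and the rest by bilinear $L^2$/volume-counting bounds. The reductions in your first paragraph (restriction to $[0,T]$, duality, conjugation, the resonance identity) are correct and match Section \ref{sect:mult-est}.

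The gap is that the step you yourself flag as ``where the real difficulty lies'' is the theorem, and you leave it unexecuted. Concretely: (i) Proposition \ref{prop:trans_low_mod} gives a bound $N_1^{-1/2}(A/N_1)^{1/2}(LL_1L_2)^{1/2}$ that \emph{grows} in $A$ and carries full $\frac12$ powers of the modulations, so it cannot simply be summed over $A\le N_1$; the paper must pair it with the complementary high-modulation bound of Proposition \ref{prop:trans_high_mod}, which \emph{decays} like $(N_1/A)^{-1/2}$ relative to it and gains $\max\{L,L_1,L_2\}^{-1/2}$, and sum each on its own side of the threshold $\alpha\sim\min\{(N_1/N)^{1/2}N_1\max\{L,L_1,L_2\}^{-1/2},N_1\}$. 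It is exactly this balancing that converts $(LL_1L_2)^{1/2}$ into $(LL_1L_2)^{5/12}$ and produces the factor $N^{-1/2}(N/N_1)^{1/4}$ which makes the high-high frequency sum converge against the $N^{1/2}$ weight coming from the $H^{-1/2}$ wave norm; your sketch asserts the conclusion without performing the computation, so neither the exponent $\frac{5}{12}$ nor the summability of the low-output-frequency sum is actually established. (ii) The claim that ``near-parallel sectors fall back into the large-resonance case'' is inaccurate when all three frequencies are comparable: in Proposition \ref{prop:paral_hh} the dichotomy is $N\sim N_1$ or $NN_1\ls\max\{L,L_1,L_2\}$, and in the first branch the resonance need not be large; that case is closed by the $O(1)$ thinness of the parallel sectors via volume counting, not by a modulation gain. (iii) In your ``easy'' high-modulation case you propose putting two factors in $L^4_{t,x}$ via linear Strichartz for the Schr\"odinger \emph{and wave} flows; the $L^4_{t,x}$ Strichartz estimate for the 2d wave equation is not admissible ($\frac1q+\frac1{2r}=\frac38>\frac14$) and is unavailable at this regularity, which is why the paper works exclusively with bilinear $L^2$ estimates and Cauchy--Schwarz volume bounds there.
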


We introduce the notation
\[
I(f,g_1,g_2) =  \int f(\zeta_1-\zeta_2)
       g_1(\zeta_1) g_2(\zeta_2)
        d\zeta_1 d\zeta_2,
\]
where $\zeta_i=(\xi_i,\tau_i)$, $i=1,2$.
Using \eref{eq:dual1} and \eref{eq:dual2} and the fact that
$\overline{\mathcal{F}u}=\mathcal{F}\overline{u}(-\cdot)$, we can
reduce Theorem \ref{thm:tri} to the following trilinear estimate:
\begin{proposition}\label{prop:trilinear}
  For all $v,u_1,u_2 \in \mathcal{S}(\R^2\times \R)$ it holds
  \begin{equation}\label{eq:trilinear}
     \left| I(\mathcal{F}v,
        \mathcal{F}u_1, \mathcal{F}u_2)
        \right|
       \ls{} \|u_1\|_{X^{S}_{0,\frac{5}{12},1}}
      \|u_2\|_{X^{S}_{0,\frac{5}{12},1}}\|v\|_{X^{W\pm}_{-\frac12,\frac5{12},1}}.
  \end{equation}
\end{proposition}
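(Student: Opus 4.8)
The plan is to prove \eref{eq:trilinear} by a dyadic decomposition of the three functions followed by a case analysis governed by the resonance function. By the symmetry $\overline{X^{W+}}=X^{W-}$ we may fix the sign, say $+$. Write $v=\sum_{N_0,L_0}W^{+}_{L_0}P_{N_0}v$ and $u_j=\sum_{N_j,L_j}S_{L_j}P_{N_j}u_j$ ($j=1,2$), replace each Fourier transform by its absolute value, and denote the resulting nonnegative functions by $g,f_1,f_2$. It then suffices to bound the elementary integral $I(g,f_1,f_2)$, for fixed dyadic $N_0,N_1,N_2,L_0,L_1,L_2$, by a constant $C(N_*,L_*)$ times $\|g\|_{L^2}\|f_1\|_{L^2}\|f_2\|_{L^2}$, in such a way that, after restoring the weights $L_j^{5/12}$, $N_1^{0}$, $N_2^{0}$ and $N_0^{-1/2}$, the sum over the dyadic parameters converges. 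The $\ell^1$-summations in the $L_j$'s are harmless whenever $C(N_*,L_*)\ls\prod_jL_j^{5/12}$ --- the target exponent $5/12$ leaves a little room over the Strichartz and bilinear exponents that arise, and any excess is absorbed off resonance by a negative power of $\max_jL_j$ --- and the $\ell^2$-summation in the $N_j$'s is handled by genuine off-diagonal decay when the frequencies are separated and by Cauchy--Schwarz in the repeated frequency otherwise.

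The key algebraic input is the resonance identity: on the support of the integrand in $I(g,f_1,f_2)$,
\[
\bigl(\tau_1+|\xi_1|^2\bigr)-\bigl(\tau_2+|\xi_2|^2\bigr)-\bigl((\tau_1-\tau_2)+|\xi_1-\xi_2|\bigr)=|\xi_1|^2-|\xi_2|^2-|\xi_1-\xi_2|,
\]
hence $\max(L_0,L_1,L_2)\gs\left||\xi_1|^2-|\xi_2|^2-|\xi_1-\xi_2|\right|$. We distinguish two frequency interactions: (i) \emph{high$\times$high}, $N_1\sim N_2\gs N_0$, and (ii) \emph{high$\times$low}, $N_j\sim N_0\gs N_{3-j}$ for some $j\in\{1,2\}$. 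In regime (ii) the right-hand side of the identity is $\sim N_0^2$ (for $N_0\gg1$; the case $N_0\sim1$ is trivial), so $\max_jL_j\gs N_0^2$; rewriting $I$ as a space--time integral of a triple product, applying H\"older, and using the bilinear $L^2$ estimate for the product of the two high-frequency factors --- whose characteristic surfaces, the paraboloid and the light cone, are violently transversal here, their slopes differing by a factor $\sim N_0$ --- one recovers much more than the factor $N_0^{1/2}$ lost to the $H^{-1/2}$-weight on $v$, and (ii) closes easily. A further split of (i) by the size of $\max_jL_j$ reduces it to its genuinely resonant part, in which $L_0,L_1,L_2$ and $|\xi_1|^2-|\xi_2|^2-|\xi_1-\xi_2|$ are all small; the complementary large-modulation part of (i) is handled exactly as in (ii).

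The heart of the matter is this resonant \emph{high$\times$high} regime: $N_1\sim N_2=:N$, $N_0\ls N$, and $L_0,L_1,L_2\ls N_0N$ with $|\xi_1|^2-|\xi_2|^2-|\xi_1-\xi_2|$ small, so that modulation alone yields nothing. The geometric input is that $|\xi_1-\xi_2|\sim N_0$ confines $\xi_1$ and $\xi_2$ to $O(1)$ adjacent cubes in a tiling of the annulus $\{|\xi|\sim N\}$ by cubes of side $\sim N_0$ --- equivalently, to $O(1)$ adjacent angular sectors, produced by the operators $Q^A_j$ with $A\sim N/N_0$ --- so that the interaction is essentially diagonal in that index. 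On a single such pair of cubes one invokes the sharp bilinear $L^2$ estimate for transversal interactions at low modulation, Proposition~\ref{prop:trans_low_mod}: the thickened paraboloids over the two cubes meet the thickened light cone over the low frequency transversally, which produces precisely the low-frequency factor $N_0^{1/2}$ matching the weight carried by $v$; the cube contributions are then reassembled using Cauchy--Schwarz and the almost-orthogonality of the $Q^A_j$. The borderline comparable case $N_0\sim N_1\sim N_2$ is treated the same way, the transversality now coming from the fact that in the resonant range the angle between $\xi_1$ and $\xi_2$ is $\gs1$.

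Collecting the three regimes, the $\ell^1$-in-$L$ sums are dominated termwise (the elementary constants never exceed $\prod_jL_j^{5/12}$, with a little slack from the resonance relation), the $\ell^2$-in-$N$ sums are controlled by Cauchy--Schwarz in the two comparable frequencies together with the off-diagonal gains of the separated regimes, and the $N_0^{-1/2}$-weight on $v$ is exactly compensated by the $N_0^{1/2}$-gains of the high$\times$high regime. I expect the main obstacle to be establishing the low-modulation transversal bilinear estimate uniformly across all parameter ranges and, in the resonant high$\times$high regime, making the $N_0$-cube (equivalently, $Q^A_j$-sector) decomposition quantitative while tracking the degeneration of transversality as $N_0$ approaches $N$; this is exactly where the angular localization operators and a careful analysis of the intersection of the thickened paraboloid and light cone are indispensable --- Proposition~\ref{prop:trans_low_mod}, for which an alternative, self-contained proof is given in the Appendix.
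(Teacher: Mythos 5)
Your overall architecture (dyadic decomposition, resonance identity, bilinear Strichartz for the high--low regime, and Proposition \ref{prop:trans_low_mod} for the transversal high--high regime) matches the paper's, and your treatment of the high--low and small-wave-frequency cases is essentially correct in outline. However, there is a genuine gap at what you yourself call the heart of the matter. You claim that in the resonant high$\times$high regime the constraint $|\xi_1-\xi_2|\sim N_0$ confines the interaction to a single angular scale $A\sim N/N_0$ at which the paraboloids and the cone are transversal. But $|\xi_1-\xi_2|\sim N_0$ with $|\xi_1|\sim|\xi_2|\sim N$ only gives the \emph{upper} bound $\angle(\xi_1,\xi_2)\ls N_0/N$; it gives no lower bound. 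Nearly parallel configurations (e.g.\ $\xi_1\parallel\xi_2$ with $\bigl||\xi_1|-|\xi_2|\bigr|\sim N_0$) are present, lie in the resonant set, and are \emph{not} transversal: the determinant controlling the Loomis--Whitney constant is proportional to $\sin\angle(\xi_1,\xi_2)$ and degenerates. Proposition \ref{prop:trans_low_mod} explicitly requires the angular separation $16\leq|j_1-j_2|\leq 32$ and cannot be applied to a pair of adjacent $N_0$-cubes containing parallel frequencies. This is why the paper performs a full Whitney decomposition \eref{eq:dyadic_angular} over \emph{all} angular scales $64\leq A\leq M$ and treats the genuinely parallel piece ($|j_1-j_2|\leq16$ at the finest scale) by an entirely different argument, Proposition \ref{prop:paral_hh}, which rests on the dichotomy \eref{dich} and one-dimensional measure bounds rather than on transversality. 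Your parenthetical claim that in the comparable case $N_0\sim N_1\sim N_2$ the resonance forces $\angle(\xi_1,\xi_2)\gs1$ fails for the same reason.

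A second, related gap is the handling of the modulation threshold inside the high$\times$high regime. You propose to split by whether $\max_jL_j\ls N_0N$ and to treat the complementary piece ``exactly as in (ii)'', but the high-modulation high$\times$high interaction is not governed by the paraboloid--cone transversality of the high--low case; the paper needs the angularly localized high-modulation estimate of Proposition \ref{prop:trans_high_mod}, whose gain $(N_1/A)^{1/2}\max\{L,L_1,L_2\}^{-1/2}$ is dual to the loss $(A/N_1)^{1/2}$ in Proposition \ref{prop:trans_low_mod}, and the two are balanced at the $L$-dependent angular threshold $\alpha\sim\min\{(N_1/N)^{1/2}N_1\max\{L,L_1,L_2\}^{-1/2},N_1\}$. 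Summing the low-modulation estimate over all admissible $A$ without this cutoff produces $(LL_1L_2)^{1/2}N_1^{-1/2}$, which does not close against the target $(LL_1L_2)^{5/12}N^{-1/2}(N/N_1)^{1/4}$ when the modulations are as large as $N_1^2$. To repair the proposal you need (a) the Whitney decomposition in angle with the separate parallel-interaction estimate, and (b) the $A$-versus-$\max L$ trade-off between Propositions \ref{prop:trans_low_mod} and \ref{prop:trans_high_mod}.
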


The proof of Proposition \ref{prop:trilinear} is given at the end of
this section.  As building blocks we provide a number of preliminary
estimates first. These are concerned with functions which are
dyadically localized in frequency and modulation. In some cases we
additionally differentiate frequencies by their angular separation.

We start this analysis by recalling the well-known bilinear
generalization of the linear $L^4$ Strichartz estimate for the
Schr\"odinger equation due to Bourgain \cite[Lemma
111]{bourgain_refinements_1998}, see \eref{eq:str-schr-schr}
below. We observe that a similar estimate is true for a
Wave-Schr\"odinger interaction.
\begin{proposition}[Bilinear Strichartz
  estimates]
\
\begin{enumerate}
\item Let $v_1,v_2\in L^2(\R^3)$ be dyadically Fourier-localized such that
\[
\supp \mathcal{F} v_i\subset \mathfrak{P}_{N_i}\cap \mathfrak{S}_{L_i}
\]
for $L_1,L_2\geq 1$, $N_1,N_2\geq 1$. Then the following estimate
holds:
\begin{equation}
  \|v_1v_2\|_{L^2(\R^3)}\ls \left(\frac{N_1}{N_2}\right)^{\frac12}L_1^\frac12L_2^\frac12\|v_1\|_{L^2}\|v_2\|_{L^2}.\label{eq:str-schr-schr}
\end{equation}
\item Let $u,v\in L^2(\R^3)$ be such that
\[
\supp \mathcal{F} u \subset C\times \R \cap \mathfrak{W}^\pm_{L},\;
\supp \mathcal{F} v \subset \mathfrak{P}_{N_1}\cap \mathfrak{S}_{L_1}
\]
for $L,L_1\geq 1$, $N_1\geq 1$ and a cube $C\subset \R^2$ of
side length $d\geq 1$.
Then the following estimate holds:
\begin{equation}
  \|uv\|_{L^2(\R^3)}\ls
  \left(\frac{\min\{d,N_1\}}{N_1}\right)^{\frac12}L^\frac12
  L_1^\frac12 \|u\|_{L^2}\|v\|_{L^2}.
\label{eq:str-wave-schr-gen}
\end{equation}
 In particular, if
\[
\supp \mathcal{F} u \subset \mathfrak{P}_{N} \cap
\mathfrak{W}^\pm_{L}, \;
\supp \mathcal{F} v \subset \mathfrak{P}_{N_1}\cap \mathfrak{S}_{L_1}
\]
for $L,L_1\geq 1$, $N,N_1\geq 1$, it follows
\begin{equation}
  \|uv\|_{L^2(\R^3)}\ls \left(\frac{\min\{N,N_1\}}{N_1}\right)^{\frac12}L^\frac12 L_1^\frac12. \|u\|_{L^2}\|v_1\|_{L^2}.\label{eq:str-wave-schr}
\end{equation}
\end{enumerate}
On the left hand side of \eref{eq:str-schr-schr},
\eref{eq:str-wave-schr} and \eref{eq:str-schr-schr} we may replace
each function with its complex conjugate.
\label{prop:bilinear-str} \end{proposition}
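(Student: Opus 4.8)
The plan is to prove Proposition~\ref{prop:bilinear-str} by reducing both estimates to a convolution/Cauchy--Schwarz bound on the characteristic set of the relevant space-time surface. The general mechanism is the following: if $v_i$ has Fourier support in a set $A_i\subset\R^3$, then $\widehat{v_1v_2}=\widehat{v_1}*\widehat{v_2}$ is supported in $A_1+A_2$, and by Cauchy--Schwarz in the convolution variable
\begin{equation*}
\|v_1v_2\|_{L^2}^2=\|\widehat{v_1}*\widehat{v_2}\|_{L^2}^2\le \Big(\sup_{\zeta}\,\big|\{\eta:\eta\in A_1,\ \zeta-\eta\in A_2\}\big|\Big)\,\|v_1\|_{L^2}^2\|v_2\|_{L^2}^2 ,
\end{equation*}
so everything comes down to estimating, uniformly in $\zeta=(\xi,\tau)$, the measure of the set of $\eta=(\xi_1,\tau_1)$ with $\xi_1\in\mathfrak P_{N_1}$, $\xi-\xi_1\in(\text{cube or }\mathfrak P_{N})$, and the two modulation constraints $|\tau_1+|\xi_1|^2|\sim L_1$, $|(\tau-\tau_1)\pm|\xi-\xi_1||\sim L$ (or $\sim L_2$ in case (i)). Integrating out $\tau_1$ first costs a factor $\min\{L_1,L\}\le L_1^{1/2}L^{1/2}$ (or $L_1^{1/2}L_2^{1/2}$), and one is left with a two-dimensional spatial measure estimate.

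First I would treat case (i), the Bourgain bilinear $L^4$ estimate. Here, after integrating in $\tau_1$, one must bound the area of the set of $\xi_1$ with $|\xi_1|\sim N_1$, $|\xi-\xi_1|\sim N_2$, and $\big|\,|\xi_1|^2+|\xi-\xi_1|^2 - c(\xi,\tau)\,\big|\lesssim \min\{L_1,L_2\}$ for a fixed constant. The Hessian computation $\nabla_{\xi_1}\big(|\xi_1|^2+|\xi-\xi_1|^2\big)=2(2\xi_1-\xi)$ shows the level sets are (arcs of) circles; the transversality gained is $|2\xi_1-\xi|\gtrsim N_1$ when $N_1\gg N_2$ (WLOG $N_1\ge N_2$, which is why the ratio $(N_1/N_2)^{1/2}$ appears — the annulus $|\xi-\xi_1|\sim N_2$ has width $\sim N_2$ transverse to these circles but the arc-length is only controlled by $N_1$, giving area $\lesssim N_2\cdot(\min\{L_1,L_2\}/N_1)\cdot(\text{something})$; carrying out the slab-in-a-circle geometry carefully yields the stated $(N_1/N_2)^{1/2}L_1^{1/2}L_2^{1/2}$). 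Since this is exactly Bourgain's Lemma~111 cited in the text, I would simply invoke it and only sketch this geometric computation.

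Next I would prove case (ii), which is the genuinely new estimate. After integrating $\tau_1$ (cost $L^{1/2}L_1^{1/2}$ after Cauchy--Schwarz) the remaining task is to bound the area of $\{\xi_1:\ \xi_1\in C,\ \xi-\xi_1\in\mathfrak P_{N_1},\ \big|\pm|\xi_1|+|\xi_1|^2+\text{lin}(\xi_1)-c\big|\lesssim \min\{L,L_1\}\}$ — but here the cleanest route is: the second variable $\xi_2:=\xi-\xi_1$ runs over the annulus $|\xi_2|\sim N_1$, and for fixed $\xi$ the resolvent surface constraint pins $\xi_2$ to a neighborhood of a curve whose tangent direction, by the same Hessian argument, is transverse to the radial direction of the annulus with quantitative lower bound $\sim N_1$; intersecting with the cube $C$ of side $d$ restricts $\xi_1$ (hence $\xi_2$) to a $d\times d$ box, so the curve-segment has length $\lesssim\min\{d,N_1\}$ and the neighborhood has transverse width $\lesssim \min\{L,L_1\}/N_1$. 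This gives area $\lesssim \min\{d,N_1\}\cdot\min\{L,L_1\}/N_1$ — wait, I should be more careful: the width gain should be arranged to produce exactly $\min\{d,N_1\}/N_1$ after the modulation factors are pulled out as $L^{1/2}L_1^{1/2}$, so the sharp bookkeeping is: measure $\le \min\{L,L_1\}\cdot\big(\min\{d,N_1\}/N_1\big)\le L^{1/2}L_1^{1/2}\cdot\big(\min\{d,N_1\}/N_1\big)$. The specialization \eref{eq:str-wave-schr} then follows by covering the annulus $\mathfrak P_N$ by $O((N/\min\{N,N_1\})\cdot\text{const})$... actually more simply: if $N\le N_1$ take $d\sim N$ (one cube covers $\mathfrak P_N$) and get $\min\{N,N_1\}/N_1=N/N_1$; if $N>N_1$ then $\xi_2\in\mathfrak P_{N_1}$ already forces $\xi_1=\xi-\xi_2$ into a ball of radius $\sim N_1$, so again $d\sim N_1$ works and the ratio is $1=N_1/N_1=\min\{N,N_1\}/N_1$. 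Summing the finitely many cubes is harmless.

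The main obstacle is the geometric area estimate in case (ii): one must verify that the wave characteristic cone $\tau=\mp|\xi_2|$ and the Schr\"odinger paraboloid $\tau_1=-|\xi_1|^2$ intersect transversally in a quantitatively usable way, uniformly over the output point $(\xi,\tau)$, and that the transversality constant is genuinely of size $N_1$ rather than degenerating — the danger being the region where $\xi_2$ is nearly antiparallel or parallel to the relevant direction. The resolution is that the relevant gradient is $\nabla_{\xi_1}\big(|\xi_1|^2\pm|\xi-\xi_1|\big)=2\xi_1\mp(\xi-\xi_1)/|\xi-\xi_1|$, whose size is $\gtrsim|\xi_1|\sim$ (diam of the cube) on the support after possibly further decomposing the cube $C$ into $O(1)$ pieces on which $|\xi_1|\sim d$ — and if $|\xi_1|\ll d$ then $\xi_1$ lives in a smaller ball and we replace $d$ by $|\xi_1|$, consistent with the $\min\{d,N_1\}$ bound. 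Handling the borderline angular sectors by this secondary decomposition, and keeping track that no logarithms are lost when summing over the $O(1)$ sub-cubes, is where the care is needed; the complex-conjugate versions are immediate since conjugation only reflects the Fourier support through the origin, preserving all the annulus/cube/slab geometry.
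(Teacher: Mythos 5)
Your overall strategy is exactly the paper's: bound $\|\widehat u\ast\widehat v\|_{L^2}$ by Cauchy--Schwarz via the measure of the intersection set $E(\xi,\tau)$, use Fubini in $\tau_1$ to extract a factor $\min\{L,L_1\}$, and then prove a two-dimensional ``slab in a box'' area estimate; part (i) is invoked from Bourgain, and the reduction of \eref{eq:str-wave-schr} to \eref{eq:str-wave-schr-gen} by covering with a cube of side $\sim\min\{N,N_1\}$ is also the paper's. However, your resolution of what you correctly identify as the main obstacle --- the quantitative transversality in part (ii) --- is wrong as stated. You claim the relevant gradient $\nabla_{\xi_1}\bigl(|\xi_1|^2\pm|\xi-\xi_1|\bigr)=2\xi_1\mp(\xi-\xi_1)/|\xi-\xi_1|$ has size $\gs|\xi_1|\sim d$ (the cube diameter, after subdividing $C$). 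This confuses the two variables: the paraboloid variable is the one constrained to the annulus $\mathfrak{P}_{N_1}$ (it is $v$, the Schr\"odinger factor, that lives there), while the cube $C$ constrains the cone variable; moreover a cube of side $d$ need not contain points of magnitude $\sim d$ at all. The correct statement --- which is what the paper proves, and which you in fact assert correctly in your earlier sentence about the curve being ``transverse to the radial direction of the annulus with quantitative lower bound $\sim N_1$'' --- is that the gradient of the full phase is dominated by the paraboloid contribution $2\xi_2$ with $|\xi_2|\sim N_1$ (the cone contributes only a unit vector), so at least one partial derivative is $\gs N_1$ whenever $N_1\geq 2$. This is essential: with transversality $\sim N_1$ the slab has width $\max\{L,L_1\}/N_1$ and the area is $\ls \min\{d,N_1\}\cdot\max\{L,L_1\}/N_1$, which closes the estimate; with transversality $\sim d$ you would only get $\min\{d,N_1\}/d$, i.e.\ the trivial factor $1$ when $d\leq N_1$, and the proposition would not follow. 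There is also no angular degeneracy to worry about, precisely because the cone term is a bounded perturbation of $2\xi_2$; the only case distinction needed is which coordinate of $\xi_2$ has size $\gs N_1$.

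A second, smaller slip is in the bookkeeping of the modulations: the width of the slab in $\xi_1$ is governed by $\max\{L,L_1\}$ (the two $\tau$-constraints force $|\tau-h_1(\xi_1)-h_2(\xi-\xi_1)|\ls L+L_1$), not by $\min\{L,L_1\}$ as you write; the factor $\min\{L,L_1\}$ is what the $\tau_1$-integration yields. The product $\min\{L,L_1\}\cdot\max\{L,L_1\}=LL_1$ is exactly what produces $L^{1/2}L_1^{1/2}$ after taking the square root, so the correct accounting is both simpler and sharper than the inequalities you interpolate. With these two points repaired your argument coincides with the paper's proof.
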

\begin{proof}
  As remarked above the estimate \eref{eq:str-schr-schr} is provided
  by \cite[Lemma 111]{bourgain_refinements_1998}, so it remains to
  show \eref{eq:str-wave-schr-gen} and \eref{eq:str-wave-schr}.
  With $f=\mathcal{F}u$ and $g=\mathcal{F}v$ it follows
  \begin{equation*}\left\|\int
      f(\xi_1,\tau_1)g(\xi-\xi_1,\tau-\tau_1)d\xi_1d\tau_1
    \right\|_{L^2_{\xi,\tau}}
    \ls \sup_{\xi,\tau}|E(\xi,\tau)|^\frac12 \|f\|_{L^2}\|g\|_{L^2}
  \end{equation*}
  by the Cauchy-Schwarz inequality, where
  \begin{equation*}
    E(\xi,\tau)=\{(\xi_1,\tau_1)\in \supp f \mid (\xi-\xi_1,\tau-\tau_1)\in \supp g \}\subset \R^3.
  \end{equation*}
  With $\underline{l}=\min\{L,L_1\}$ and $\overline{l}=\max\{L,L_1\}$
  the volume of this set can be estimated as
  \begin{equation*}
    |E(\xi,\tau)|
    \leq  \underline{l} \cdot
    | \{\xi_1\mid | \tau\pm |\xi_1|+|\xi-\xi_1|^2|\ls \overline{l}, \xi_1\in C,|\xi-\xi_1|\sim N_1\}|,
  \end{equation*}
  by Fubini's theorem.  The latter subset of $\R^2$ is contained in a
  cube of side length $m$, where $m \sim \min\{d,N_1\}$, so if $N_1=1$
  the estimate follows. If $N_1\geq 2$ and the first component
  $\xi_{1,1}$ is fixed, then the second component $\xi_{1,2}$ is
  confined to an interval of length $m$, and vice versa. In the subset
  where $|(\xi-\xi_{1})_2|\gs N_1$ we observe that
  $|\partial_{\xi_{1,2}}(\tau\pm|\xi_1|+|\xi-\xi_1|^2)|\gs N_1$, and
  similarly in the subset where $|(\xi-\xi_{1})_1|\gs N_1$ we
  observe that
  $|\partial_{\xi_{1,1}}(\tau\pm|\xi_1|+|\xi-\xi_1|^2)|\gs N_1$. This
  shows that
\[
| \{\xi_1\mid | \tau\pm |\xi_1|+|\xi-\xi_1|^2|\ls \overline{l}, \xi_1
\in C,|\xi-\xi_1|\sim N_1\}|\ls N_1^{-1}\overline{l}m,
\]
and the claim \eref{eq:str-wave-schr-gen} follows. This also implies
the claim \eref{eq:str-wave-schr} because the dyadic annulus of
radius $N$ is contained in a cube of side length $d\sim N$.
\end{proof}

\noindent
Let $\angle(\xi_1,\xi_2)\in [0,\frac{\pi}{2}]$ denote the (smaller) angle between
the lines spanned by $\xi_1,\xi_2 \in \R^2$.  For dyadic numbers $64 \leq A \leq M$
we consider the following angular decomposition
\begin{eqnarray}
\fl  \R^2\times \R^2 \nonumber
  &=\left\{ \angle(\xi_1,\xi_2)\leq \frac{16\pi}{M}\right\}\cup
\bigcup_{64 \leq A\leq M}\left\{ \frac{16\pi}{A} \leq \angle(\xi_1,\xi_2)\leq \frac{32\pi}{A}\right\}
\nonumber\\
\fl &=\bigcup_{0\leq j_1,j_2\leq M-1 \atop |j_1-j_2|\leq 16}
  \mathfrak{Q}^M_{j_1}\times \mathfrak{Q}^M_{j_2} \cup
   \bigcup_{64\leq A\leq M}
   \bigcup_{0\leq j_1,j_2\leq A-1 \atop 16\leq |j_1-j_2|\leq 32}
  \mathfrak{Q}^A_{j_1}\times \mathfrak{Q}^A_{j_2}\label{eq:dyadic_angular}
\end{eqnarray}
Therefore, we consider for each dyadic $A\in [64,M]$ slices of angular
aperture $\sim A^{-1}$ with an angular separation of size $\sim
A^{-1}$, and additionally slices which are of angular aperture less
than $M^{-1}$. This is a dyadic, angular Whitney type decomposition
with threshold $M$.

\begin{proposition}[Transverse high-high interactions, low
  modulation]\label{prop:trans_low_mod}
 Let $f,g_1,g_2\in L^2$ with
  $\| f \|_{L^2} =\| g_1\|_{L^2}= \|g_2\|_{L^2}=1$ and
  \begin{equation*}
    \supp(f)\subset \mathfrak{W}^\pm_L \cap \mathfrak{P}_{N},\quad 
    \supp(g_k)\subset \mathfrak{Q}^{A}_{j_k} \cap \mathfrak{P}_{N_k} \cap \mathfrak{S}_{L_k}\quad (k=1,2).
  \end{equation*}
where the frequencies $N,N_1,N_2$ and modulations $L,L_1,L_2$ satisfy
\[
64\leq N\ls N_1 \sim N_2, \qquad L_1,L_2,L \ls N_1^2
\]
while the angular localization parameters $A$ and $j_1,j_2$ satisfy
\[
64\leq A\ll N_1, \qquad 16 \leq |j_1-j_2| \leq 32
\]
 Then the following estimate holds
  \begin{equation} \label{eq:trans_low_mod}
    |I(f,g_1,g_2)|
    \ls \frac{1}{N_1^\frac12} \left(\frac{A}{N_1}\right)^{\frac12}
    (L_1 L_2 L)^\frac12.
  \end{equation}
\end{proposition}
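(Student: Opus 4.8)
The plan is to dualize, split the trilinear form into a short list of bilinear estimates by Cauchy--Schwarz, and then close the estimate by a case analysis on the relative sizes of the three modulations $L,L_1,L_2$, the crucial structural input being the transversality forced by the angular separation $16\le|j_1-j_2|\le 32$. For the reductions: we may assume $f,g_1,g_2\ge 0$, and since $|\xi_i|\sim N_1$ and $\angle(\xi_1,\xi_2)\gtrsim A^{-1}$ force $|\xi_1-\xi_2|\gtrsim N_1/A$ by the law of cosines, the integrand vanishes unless $N\gtrsim N_1/A$, which we henceforth assume. Writing $I(f,g_1,g_2)=\int u_1\,\bar u_2\,\bar u\,dx\,dt$, where $\mathcal F u_1=g_1$, $\mathcal F u_2=\bar g_2$, $\mathcal F u=\bar f$ (so each has unit $L^2$ norm and the stated Fourier supports), Cauchy--Schwarz in two of the three factors gives
\[
|I(f,g_1,g_2)|\le\min\bigl\{\|u\bar u_1\|_{L^2},\ \|u\bar u_2\|_{L^2},\ \|u_1\bar u_2\|_{L^2}\bigr\}.
\]

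The first two quantities are controlled by the bilinear wave--Schr\"odinger estimate \eref{eq:str-wave-schr} (which also holds with complex conjugates), giving $\|u\bar u_k\|_{L^2}\lesssim(N/N_1)^{1/2}(LL_k)^{1/2}$ for $k=1,2$. The third is a transverse bilinear Schr\"odinger estimate of the same flavour as Proposition \ref{prop:bilinear-str}: by Cauchy--Schwarz in the convolution variable plus a volume count that uses both that the Schr\"odinger surfaces over $\Theta^A_{j_1}$, $\Theta^A_{j_2}$ have spatial normals separated by $\gtrsim N_1/A$ and that the output frequency $\xi_1-\xi_2$ has modulus $\sim N$, one obtains $\|u_1\bar u_2\|_{L^2}\lesssim\bigl(L_1L_2N_1/(AN)\bigr)^{1/2}$ in the range $N\gg N_1/A$. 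Finally, a direct resonant volume count — writing $|I|\le\Theta^{1/2}$ with $\Theta=\sup_{\zeta_2\in\supp g_2}|\{\zeta_1\in\supp g_1:\zeta_1-\zeta_2\in\supp f\}|$, observing that the modulation restrictions on $\tau_1$ coming from $g_1$ and from $f$ pin it to an interval of length $\sim\min\{L_1,L\}$ and impose on $\xi_1$ a resonance relation with essentially radial gradient of size $\sim N_1$, so that $\xi_1$ lies in a slab of thickness $\sim(L_1+L)/N_1$ which meets the $N_1\times(N_1/A)$ box $\Theta^A_{j_1}\cap\mathfrak P_{N_1}$ in measure $\lesssim(L_1+L)/A$ — yields, together with its twin, $|I(f,g_1,g_2)|\lesssim\bigl(L\min\{L_1,L_2\}/A\bigr)^{1/2}$.

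Comparing each of the above with the target $\tfrac{A^{1/2}}{N_1}(L_1L_2L)^{1/2}$: the resonant volume count already suffices whenever $\max\{L_1,L_2\}\gtrsim(N_1/A)^2$, and the transverse Schr\"odinger bound suffices whenever $L\gtrsim N_1$ together with $N\gg N_1/A$. What remains — all three modulations small, so that the interaction is genuinely near-resonant, in particular the near-diagonal case $N\sim N_1/A$ — is the heart of the matter, and the crude slab/box volume counts are too lossy there. The plan for this regime is to combine the sharp resonance identity
\[
\bigl|\,|\xi_1|^2-|\xi_2|^2\mp|\xi_1-\xi_2|\,\bigr|\lesssim\max\{L,L_1,L_2\}
\]
with the geometric fact that for $\xi_1\in\Theta^A_{j_1}$, $\xi_2\in\Theta^A_{j_2}$ with $|\xi_1-\xi_2|\sim N$ the difference $\xi_1-\xi_2$ is confined to a single angular sector of aperture $\sim N_1/(AN)$: decomposing $f$ into the corresponding $\sim NA/N_1$ almost orthogonal angular pieces and estimating each by \eref{eq:str-wave-schr} is what recovers the gain $(A/N_1)^{1/2}$. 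I expect the careful coupling of this angular Whitney localization \eref{eq:dyadic_angular} with the resonance identity to be the main technical difficulty; it is also, morally, the reason for introducing the decomposition \eref{eq:dyadic_angular} in the first place.
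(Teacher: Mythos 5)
Your reductions and your treatment of the large-modulation regimes are essentially sound: the resonant volume count does give $|I|\ls (L\min\{L_1,L_2\}/A)^{1/2}$, hence the claim when $\max\{L_1,L_2\}\gs (N_1/A)^2$, and the transverse bilinear Schr\"odinger bound $\|u_1\bar u_2\|_{L^2}\ls (L_1L_2N_1/(AN))^{1/2}$ covers $L\gs N_1^3/(A^2N)$ (not ``$L\gs N_1$'' as you state: already for $N\sim N_1$ the correct threshold is $L\gs N_1^2/A^2$, which exceeds $N_1$ for moderate $A$). The genuine gap is the remaining regime, which you yourself flag as the heart of the matter: your plan for it does not work, and no iterated bilinear Cauchy--Schwarz argument of the kind you propose can work. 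Test it at $L=L_1=L_2=1$, $N\sim N_1$. Each of the three pairings then yields an upper bound no better than $A^{-1/2}$: for $\|u_1\bar u_2\|_{L^2}$ the convolution fiber has measure $\ls \min\{L_1,L_2\}\cdot\frac{L_1+L_2}{N}\cdot\frac{N_1}{A}\sim A^{-1}$, and the same $A^{-1}$ appears for the wave--Schr\"odinger pairings once the angular support of $g_k$ is exploited; your proposed refinement (restricting $f$ to the sector of aperture $\sim N_1/(AN)$ containing $\xi_1-\xi_2$ and invoking \eref{eq:str-wave-schr-gen} with $d\sim N_1/A$ after a cube decomposition) again gives exactly $A^{-1/2}(LL_k)^{1/2}=A^{-1/2}$. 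The target is $A^{1/2}/N_1=A^{-1/2}\cdot(A/N_1)$, smaller by the factor $A/N_1\ll 1$, so the whole low-modulation range $L_1,L_2\ll(N_1/A)^2$, $L\ll N_1^3/(A^2N)$ is out of reach of your toolkit.

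The missing ingredient is a genuinely \emph{trilinear} transversality gain, which is why the paper does not argue bilinearly here. In the main proof one first foliates the three modulation neighborhoods into unit slices (producing the harmless factor $(L_1L_2L)^{1/2}$), parabolically rescales, and then applies the nonlinear Loomis--Whitney inequality of Proposition \ref{cor:conv_theta} to the two paraboloids and the (almost flat) rescaled cone, whose unit normals have determinant $\theta\gs A^{-1}$ precisely because of the angular separation $|j_1-j_2|\sim 16$; the resulting constant $\theta^{-1/2}\varepsilon^{1/2}=A^{1/2}N_1^{-1}$ is exactly the per-slice bound \eref{eq:reduce} you need. The alternative proof in \ref{sect:alt_proof} achieves the same gain by the change of variables $(\xi_1,\xi_2)\mapsto(\mu,\nu,\sigma)$ with $\mu=\xi_1+\xi_2$, $\nu$ the resonance function and $\sigma$ a well-chosen component of $\xi_1$, using the Jacobian $|J|\sim N_1 A^{-1}$ or $N_1$ together with the measure of the $\sigma$-fiber. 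One of these two mechanisms (or an equivalent) must replace the last step of your argument; as written, the proposal proves the proposition only in the large-modulation regimes.
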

The following proof of Proposition \ref{prop:trans_low_mod} is based
on a quantitative, nonlinear version of
the classical Loomis-Whitney-inequality
\cite{loomis_inequality_1949}.

\begin{proposition}[see \cite{bejenaru_convolution_2008}]\label{cor:conv_theta}
  Let $C_1,C_2,C_3$ be cubes in $\R^3$ of diameter $2R>0$. Consider two
  paraboloids in $\R^3$ which are graphs of $\phi_1,\phi_2\in C^{1,1}$ within $C_1,C_2$ and a cone in $\R^3$ which is a graph of $\phi_3 \in C^{1,1}$ within $C_3$,
  such that the homogeneous semi-norms satisfy $[\phi_j]_{C^{1,1}}\ls 1$. Moreover, assume that
  they are transversal in the sense that
  the determinant of every triple of unit normals to points on the surfaces within these cubes is at least
  of size $\theta>0$ and suppose that $R\ls \theta$.
  Now, for given subsets $\Sigma_1,\Sigma_2,\Sigma_3$ of the above surfaces which are contained in
  the $\frac12$-shrinked cubes with same center
  and for each $f\in L^2(\Sigma_1)$ and $g\in L^2(\Sigma_2)$ the
  restriction of the convolution $f\ast g$ to $\Sigma_3$ is a
  well-defined $L^2(\Sigma_3)$-function which satisfies
  \begin{equation}\label{eq:conv_theta}
    \|f\ast g\|_{L^2(\Sigma_3)}\leq  \frac{C}{\sqrt{\theta}}
    \|f\|_{L^2(\Sigma_1)}\|g\|_{L^2(\Sigma_2)}.
  \end{equation}
\end{proposition}
This follows from \cite[Corollary 1.6]{bejenaru_convolution_2008}.
We also refer the interested reader to
the earlier paper \cite{bennett_nonlinear_2005} which contains a version of the aforementioned inequality in broader
generality under slightly more restrictive and
non-scalable assumptions. To keep the paper
self-contained, we provide an independent proof of Proposition \ref{prop:trans_low_mod} in \ref{sect:alt_proof} which is based on elementary geometric considerations and orthogonality.

\begin{proof} We abuse notation and replace $g_2$ by $g_2(-\cdot)$ and
  change variables $\zeta_2\mapsto -\zeta_2$ to obtain the usual
  convolution structure. From now on it holds $|\tau_2-|\xi_2|^2|\sim
  L_2$ within the support of $g_2$. We consider only the case $\supp(f)\subset \mathfrak{W}^-_L$
  since in the case $\supp(f)\subset \mathfrak{W}^+_L$ the same arguments apply.

  For fixed $\xi_1,\xi_2$ we change variables $c_1=\tau_1+|\xi_1|^2$,
  $c_2=\tau_2-|\xi_2|^2$.  By decomposing $f$ into $\sim L$ pieces and
  applying the Cauchy-Schwarz inequality, it suffices to prove
  \begin{equation}\label{eq:reduce}
    \eqalign{
      &\left| \int g_{1}(\phi^-_{c_1}(\xi_1)) g_{2}
        (\phi^+_{c_2}(\xi_2))
        f(\phi^-_{c_1}(\xi_1)+\phi^+_{c_2}(\xi_2)) d\xi_1 d\xi_2 \right| \\
      \ls{} & \frac{A^{\frac12}}{N_1} \|
      g_{1}\circ\phi^-_{c_1}\|_{L^2_\xi} \| g_{2}\circ \phi^+_{c_2}
      \|_{L^2_\xi} \| f\|_{L^2}}
  \end{equation}
  where $f$ is now supported in $c \leq \tau-|\xi| \leq c+1$ and
  $\phi^\pm_{c_k}(\xi)=(\xi,\pm|\xi|^2 + c_k)$, $k=1,2$, and the
  implicit constant is independent of $c, c_1,c_2$.

  We refine the localization on $\xi$ and $\tau$ components by
  orthogonality methods, see also Lemma \ref{L:localize}.  Since the
  support of $f$ in the $\tau$ direction is confined to an interval of
  length $\ls N_1$, $|\xi_2|^2-|\xi_1|^2$ is localized
  in a specific interval of length $\sim N_1$ which in turn localizes
  $|\xi_2|-|\xi_1|$ in an interval of size $\sim 1$. By decomposing
  the plane into annuli of size $\sim 1$ and using the Cauchy-Schwarz
  inequality, we reduce \eref{eq:reduce} further to the additional
  assumption that the support of $g_1 \circ \phi^-_{c_1}$ and 
  $g_2 \circ \phi^+_{c_2} $ is an interval of
  length $\sim 1 \ls N_1 A^{-1}$.  Recalling the additional angular
  localization, we can assume that $g_1,g_2$ and $f$ are each
  localized in cubes of size $N_1 A^{-1}$ with respect to the $\xi$
  variables.
        
  We use the parabolic scaling $(\xi,\tau)\mapsto (N_1 \xi, N_1^2
  \tau)$ to define
  \begin{equation*}
    \tilde{f}(\xi,\tau)=f(N_1 \xi,N_1^2\tau), \;
    \tilde{g}_k(\xi_k,\tau_k)=g_{k}(N_1 \xi_k,N_1^2\tau_k),\; k=1,2.
  \end{equation*}
  If we set $\tilde{c}_k=c_kN_k^{-2}$, equation \eref{eq:reduce}
  reduces to
  \begin{equation} \label{eq:reduce1}
    \eqalign{
      &\left| \int \tilde{g}_{1}(\phi^-_{\tilde{c}_1}(\xi_1))
        \tilde{g}_{2} (\phi^+_{\tilde{c}_2}(\xi_2))
        \tilde{f}(\phi^-_{\tilde{c}_1}(\xi_1)+\phi^+_{\tilde{c}_2}(\xi_2)) d\xi_1 d\xi_2 \right| \\
      \ls{}& \frac{A^{\frac12}}{N_1} \|
      \tilde{g}_{1}\circ\phi^-_{\tilde{c}_1} \|_{L^2_\xi} \|
      \tilde{g}_{2} \circ\phi^+_{\tilde{c}_2} \|_{L^2_\xi} \|
      \tilde{f} \|_{L^2},}
  \end{equation}                
  where now $\tilde{g}_k$ is supported in a cube of size $\sim A^{-1}$
  with $|\xi_k| \sim 1$ and the supports are separated by $\sim
  A^{-1}$.  $\tilde{f}$ is supported in a neighborhood of size
  $N^{-2}_1$ of the surface $S_3$ parametrized by
  $(\xi,\psi_{N_1}(\xi))$ for $\psi_{N_1}(\xi)=\frac{|\xi|}{N_1} +
  \frac{c}{N^2_1}$.  Let us put $\varepsilon=N_1^{-2}$ and denote this
  neighborhood by $S_3(\varepsilon)$.  The separation of $\xi_1$ and
  $\xi_2$ above implies also that in the support of $\tilde{f}$ we
  have $|\xi| \gs A^{-1}\geq N_1^{-1}$.

  By density and duality it is enough to consider continuous $\tilde
  g_1,\tilde g_2$ and we can further rewrite the above estimate as
  \begin{equation} \label{surface} \| \tilde g_1|_{S_1} \ast \tilde
    g_2|_{S_2} \|_{L^2(S_3(\varepsilon))} \ls A^\frac12
    \varepsilon^\frac12 \| \tilde g_1 \|_{L^2(S_1)} \| \tilde g_2
    \|_{L^2(S_2)}
  \end{equation}
  where $S_i$, $i=1,2$ are parametrized by $\phi^\pm_{\tilde c_i}$.
  The above localization properties of the support of $\tilde{g}_i$
  are inherited by $S_i$, which implies that the maximal diameter of
  the $S_1$, $S_2$ and $S_3$ is at most $R\sim A^{-1}$.  Obviously,
  the parametrizations of the paraboloids $S_1$ and $S_2$ have $C^{1,1}$
  semi-norm $\sim 1$.  Concerning
  $S_3$ we estimate
  $$
  |\nabla \psi_{N_1} (\xi)- \nabla \psi_{N_1} (\eta)| \ls N_1^{-1} |
  \frac{\xi}{|\xi|}-\frac{\eta}{|\eta|}| \ls |\xi - \eta|
  $$
  where we have used that $|\xi|,|\eta| \geq N_1^{-1}$ in
  the base of $S_3$. Therefore, the $C^{1,1}$ semi-norm for our parametrization of $S_3$ is $ \ls 1$.
  
  Finally, we need to analyze the transversality properties of our surfaces. In other words, we
  need to determine a uniform lower bound $\theta$ on the size of the
  determinant $d$ of the matrix of three unit normal vector fields.
  Intuitively it is clear that -- since the parabolically rescaled
  cone is almost flat -- this is determined by the minimal
  angular separation $\sim A^{-1}$ between the $\xi$-supports of $g_1$
  and $g_2$. In fact, we will show that $\theta\gs A^{-1}$ below.  In
  summary, we have $R \ls \theta$ and we invoke \eref{eq:conv_theta} to obtain \eref{surface}.

  Let us carefully verify the transversality condition $\theta\gs A^{-1}$ indicated above: The
  determinant of any three unit normals to $S_1$, $S_2$, and $S_3$ is given by
  \begin{equation*}
    d=\left|\begin{array}{ccc}
      \frac{2\xi_1}{\la 2\xi \ra} & \frac{2\eta_1}{\la 2\eta \ra} & \frac{\zeta_1}{|\zeta| \la N_1 \ra} \\
      \frac{2\xi_2}{\la 2\xi \ra} & \frac{2\eta_2}{\la 2\eta \ra} & \frac{\zeta_2}{|\zeta| \la N_1 \ra} \\
      \frac{1}{\la 2\xi \ra} & -\frac{1}{\la 2\eta \ra} & \frac{N_1}{\la N_1 \ra}
    \end{array}\right|
  \end{equation*}
  which we expand as $d=d_1+d_2+d_3$, with main contribution
  \begin{equation*}
    d_1= \frac{N_1}{\la N_1 \ra} \left|\begin{array}{cc} \frac{2\xi_1}{\la 2\xi \ra} & \frac{2\eta_1}{\la 2\eta \ra}  \\\frac{2\xi_2}{\la 2\xi \ra} & \frac{2\eta_2}{\la 2\eta \ra} 
      \end{array}\right|
    \end{equation*}
  and the error terms
  \begin{equation*}
    d_2=-\frac{\zeta_2}{|\zeta| \la N_1 \ra}  \left|\begin{array}{cc}
      \frac{2\xi_1}{\la 2\xi \ra} & \frac{2\eta_1}{\la 2\eta \ra}  \\
      \frac{1}{\la 2\xi \ra} & - \frac{1}{\la 2\eta \ra} \end{array}\right| ,\quad 
    d_3= \frac{\zeta_1}{|\zeta| \la N_1 \ra}  \left|\begin{array}{cc}
      \frac{2\xi_2}{\la 2\xi \ra} & \frac{2\eta_2}{\la 2\eta \ra} \\
      \frac{1}{\la 2\xi \ra} & -\frac{1}{\la 2\eta \ra} 
\end{array}\right|.
  \end{equation*}
  The contribution of the last two terms $d_2$ and $d_3$ is bounded by
  \[
  |d_2|+|d_3| \ls \frac{|\zeta_1|+|\zeta_2|}{|\zeta| \la N_1\ra} \ls
  N_1^{-1}
  \]
  The first determinant $d_1$ can be rewritten as
  $$
  d_1= \frac{N_1}{\la N_1 \ra} \frac{2|\xi|}{\la 2\xi \ra}
  \frac{2|\eta|}{\la 2\eta \ra} \left|\begin{array}{cc} \frac{\xi_1}{|\xi|} &
    \frac{\eta_1}{|\eta|} \\ \frac{\xi_2}{|\xi|} &
    \frac{\eta_2}{|\eta|}
  \end{array}\right|
  =\frac{N_1}{\la N_1 \ra} \frac{2|\xi|}{\la 2\xi \ra}
  \frac{2|\eta|}{\la 2\eta
    \ra}\sin\angle\left(\frac{\xi}{|\xi|},\frac{\eta}{|\eta|}\right)
  $$
  Recalling that $|\xi|,|\eta| \sim 1$ (since they are in the support
  of $g_1$, respectively $g_2$), it follows that $\frac{N_1}{\la N_1
    \ra} \frac{2|\xi|}{\la 2\xi \ra} \frac{2|\eta|}{\la 2\eta \ra} \gs
  1$.  By the angular separation between $S_1$ and $S_2$ we obtain $
  |d_1| \gs A^{-1}$ and by recalling that $A \gg N_1$ it follows that
  $|d| \gs A^{-1}$.
\end{proof}

In the case where the maximal modulation is high a different bound
will be favourable.

\begin{proposition}[Transverse high-high interactions, high
  modulation]\label{prop:trans_high_mod}
  Let $f,g_1,g_2\in L^2$, $\|f\|_{L^2}=\|g_1\|_{L^2}=\|g_2\|_{L^2}=1$
  such that
  \begin{equation*}
    \supp(f)\subset \mathfrak{P}_{N}\cap \mathfrak{W}^\pm_L ,\quad 
    \supp(g_k)\subset \mathfrak{Q}^{A}_{j_k} \cap \mathfrak{P}_{N_k} \cap \mathfrak{S}_{L_k}\quad (k=1,2),
  \end{equation*}
  with $64 \leq N \ls N_1 \sim N_2$ and $64\leq A \leq N_1$. Moreover,
  assume that $16\leq |j_1-j_2|\leq 32$.
 Then
  \begin{equation}\label{eq:trans_high_mod}
    |I(f,g_1,g_2)| 
    \ls \frac{L_1^\frac12 L_2^\frac12L^\frac12 N^{-\frac12}}{\max\{L,L_1,L_2\}^\frac12}\left(\frac{N_1}{A}\right)^{\frac12}
  \end{equation}
\end{proposition}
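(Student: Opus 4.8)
The plan is to treat this as a variant of the low-modulation case (Proposition \ref{prop:trans_low_mod}), now without the hypothesis $L,L_1,L_2 \ls N_1^2$, and to exploit the extra size of the largest modulation by an $L^2$-based bilinear Strichartz pairing rather than the Loomis--Whitney machinery. First I would, as usual, replace $g_2$ by $g_2(-\cdot)$ and change variables $\zeta_2 \mapsto -\zeta_2$ so that $I(f,g_1,g_2)$ becomes a genuine convolution integral $\int (g_1 \ast g_2)\,\bar f$, or equivalently a triple product $\int (G_1 G_2 F)$ after taking inverse Fourier transforms, where $G_k = \mathcal{F}^{-1} g_k$ and $F = \mathcal{F}^{-1} f$. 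By symmetry in the roles of the three modulations I would split into the three cases $\max\{L,L_1,L_2\} = L$, $= L_1$, $= L_2$; the target bound $\max\{L,L_1,L_2\}^{-1/2}$ tells us exactly which function to ``give up'' in $L^2$ via its modulation weight.

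Consider first the case $\max = L$. Then I write $|I(f,g_1,g_2)| \le \|G_1 G_2\|_{L^2} \|F\|_{L^2}$ and apply the bilinear Strichartz estimate \eref{eq:str-schr-schr} (Schrödinger--Schrödinger) to the pair $G_1, G_2$, which are localized in $\mathfrak{P}_{N_k} \cap \mathfrak{S}_{L_k}$ with $N_1 \sim N_2$: this gives $\|G_1 G_2\|_{L^2} \ls (N_1/N_2)^{1/2} L_1^{1/2} L_2^{1/2} \sim L_1^{1/2} L_2^{1/2}$. Since $\|F\|_{L^2} = 1$ this alone yields $|I| \ls L_1^{1/2} L_2^{1/2}$, which is off from \eref{eq:trans_high_mod} by the factors $L^{1/2} N^{-1/2} (N_1/A)^{1/2} / L^{1/2} = N^{-1/2}(N_1/A)^{1/2}$ — wait, in this case $\max = L$ so the target is $L_1^{1/2} L_2^{1/2} L^{1/2} N^{-1/2} L^{-1/2} (N_1/A)^{1/2} = L_1^{1/2} L_2^{1/2} (N_1/(AN))^{1/2}$, and since $A \ge 64$, $N \le N_1$, the factor $(N_1/(AN))^{1/2}$ may be smaller OR larger than $1$; so the naive Strichartz bound is not yet enough and I must instead pair $F$ with one of the $g_k$. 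The better route in the case $\max = L$ is: pair $g_1$ (say) with $f$ using the Wave--Schrödinger bilinear estimate \eref{eq:str-wave-schr}, getting $\|G_1 F\|_{L^2} \ls (\min\{N,N_1\}/N_1)^{1/2} L^{1/2} L_1^{1/2} = (N/N_1)^{1/2} L^{1/2} L_1^{1/2}$, then bound $|I| \le \|G_1 F\|_{L^2} \|G_2\|_{L^2} \ls (N/N_1)^{1/2} L^{1/2} L_1^{1/2}$. This still is not the claimed bound, so the genuinely sharp step must bring in the angular separation: I would decompose $f$ into $\sim L/\underline{l}$ slabs dual to the modulation variable, or more precisely use that, after fixing the annular and angular localizations forced by the support conditions (exactly as in the proof of Proposition \ref{prop:trans_low_mod}: the $\tau$-support of $f$ localizes $|\xi_2|^2 - |\xi_1|^2$ and hence $|\xi_2| - |\xi_1|$ to size $\sim 1$, and the angular slices have aperture $\sim A^{-1}$), the $\xi$-supports of $g_1$ and $g_2$ are essentially contained in boxes of dimensions $\sim 1 \times N_1 A^{-1}$, giving the gain $(N_1/A)^{1/2} \cdot$ (modulation factors) in the transversal convolution. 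So the real computation is: after these reductions, estimate the volume of the resolvent set $E(\xi,\tau)$ (as in the proof of Proposition \ref{prop:bilinear-str}) with the refined box geometry and with the best two modulations kept as $L^1$-in-the-convolution / $L^\infty$ weights, which produces $|E| \ls (N_1/A) \cdot \underline{l}\,\overline{l}$ type bounds with $\underline{l} = $ smallest of the two modulations being integrated — and then the factor $\max\{L,L_1,L_2\}^{-1/2}$ is simply the modulation weight that is left untouched.

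The main obstacle, and where I would spend most of the effort, is bookkeeping the three cases uniformly and making sure the angular/annular reductions (which in Proposition \ref{prop:trans_low_mod} used $L,L_1,L_2 \ls N_1^2$ to control the $\tau$-support of $f$ by $\ls N_1$) still go through when the modulations are large: here the $\tau$-support of $f$ may be as large as $\max\{L,L_1,L_2\}$ rather than $N_1^2$, so the interval localizing $|\xi_2| - |\xi_1|$ has size $\sim \max\{L_k\}/N_1$ rather than $\sim 1$, and one must either accept this larger annular thickness (absorbing it against the modulation weight) or observe that the resolvent-volume estimate is insensitive to it because the relevant direction is the short one of size $N_1 A^{-1}$. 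I expect the cleanest writeup to bypass the Loomis--Whitney proposition entirely and argue purely by the Cauchy--Schwarz / resolvent-set volume bound of Proposition \ref{prop:bilinear-str}, refined by the angular separation, since in the high-modulation regime the full strength of the nonlinear Loomis--Whitney inequality is not needed — the transversality gain is already captured by the cruder box geometry. I would also double-check the endpoint case $A \sim N_1$, where $(N_1/A)^{1/2} \sim 1$ and the estimate should degenerate gracefully to a plain application of \eref{eq:str-schr-schr} and \eref{eq:str-wave-schr}.
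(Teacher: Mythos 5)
Your overall strategy---abandoning Loomis--Whitney and arguing by Cauchy--Schwarz plus a volume count on the resonance set, refined by the angular box geometry---is exactly the paper's approach. But the quantitative heart of the argument is missing, and the volume bound you state is off by a full factor of $N$. In the subcase $\max\{L,L_1,L_2\}=L$ the paper bounds $|I(f,g_1,g_2)|$ by $\sup_{\zeta_0}|B(\zeta_0)|^{1/2}$ with
\[
B(\zeta_0)=\bigl\{\zeta_1 :\ \zeta_1\in\mathfrak{Q}^A_{j_1}\cap\mathfrak{P}_{N_1}\cap\mathfrak{S}_{L_1},\ \zeta_1-\zeta_0\in\mathfrak{Q}^A_{j_2}\cap\mathfrak{P}_{N_2}\cap\mathfrak{S}_{L_2}\bigr\},
\]
and obtains $|B|\ls L_1\cdot(L_2/N)\cdot(N_1/A)$: the factor $N_1/A$ is the short ($\xi_{1,2}$) direction of the wedge after rotating so that $j_1=0$, the factor $L_1$ is the $\tau_1$-thickness, and the crucial factor $L_2/N$ in the long ($\xi_{1,1}$) direction comes from $\partial_{\xi_{1,1}}\bigl(|\xi_1|^2-|\xi_1-\xi_0|^2\bigr)=2\xi_{0,1}$, which has size $N$. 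Your bound ``$|E|\ls(N_1/A)\,\underline{l}\,\overline{l}$'' omits this $1/N$ and therefore loses $N^{1/2}$ against \eref{eq:trans_high_mod}. Moreover, $|\xi_{0,1}|\sim N$ holds only when $N\gg N_1A^{-1}$ (otherwise $\xi_0$ may point in the short direction), so the count must be preceded by the dichotomy the paper sets up: the integral vanishes unless $N\gs N_1A^{-1}$ (since $|\xi_{1,2}-\xi_{2,2}|\sim N_1A^{-1}$), and when $N\sim N_1A^{-1}$ the factor $(N_1/(AN))^{1/2}$ is $\sim 1$, so the plain bilinear Strichartz pairing you tried first---which you correctly computed gives $(L_1L_2)^{1/2}$, with the wave--Schr\"odinger analogues when $\max=L_i$---already closes that case. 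You noticed that this naive bound can fail, but did not isolate the regime where it succeeds.

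A second point: your worry about the annular localization of $|\xi_2|-|\xi_1|$ growing to size $\max\{L_k\}/N_1$ is a red herring here. The high-modulation proof uses no annular decomposition at all; the only localizations needed are $|\xi_{1,2}|,|\xi_{2,2}|\ls N_1/A$ from the wedges together with the modulation constraints. In the remaining subcases $\max=L_1$ (or $L_2$) the same count applied to $C(\zeta_1)=\{\zeta_2:\dots\}$ gives $|C|\ls\min\{L,L_2\}\cdot\bigl(\max\{L,L_2\}/N_1\bigr)\cdot(N_1/A)=LL_2/A$, now using $\partial_{\xi_{2,1}}\bigl(|\xi_2|^2\pm|\xi_1-\xi_2|\bigr)=2\xi_{2,1}\sim N_1$; this is in fact slightly stronger than \eref{eq:trans_high_mod} because $N\ls N_1$. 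Identifying which variable to integrate and which derivative of the resonance function supplies the $1/N$ versus $1/N_1$ gain in each subcase is the actual content of the proposition, and it is precisely the step your sketch leaves open.
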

\begin{remark}
 The estimate \eref{eq:trans_high_mod} gives a better bound than
  \eref{eq:trans_low_mod} in the case where
  \begin{equation}\label{eq:high_mod_cond}
    \max\{L,L_1,L_2\}\geq \left(\frac{N_1}{A}\right)^2\frac{N_1}{N}.
  \end{equation}
\end{remark}
\begin{proof}[Proof of Proposition \ref{prop:trans_high_mod}]
  After a rotation we may assume that $j_1=0$.  Due to the
  localization of the wedges we observe that the integral vanishes
  unless $N\gs N_1A^{-1}$, since $|\xi_{2,2}-\xi_{1,2}|\sim N_1
  A^{-1}$. We consider two cases:
  \begin{enumerate}
  \item\label{it:smallN} $N \sim N_1A^{-1}$
  \item\label{it:largeN}$N\gg N_1 A^{-1}$.
  \end{enumerate}
  In case \eref{it:smallN}  we start with the subcase
  where $\max\{L,L_1,L_2\}=L$. From  the bilinear Strichartz
  estimate for the Schr\"odinger equation \eref{eq:str-schr-schr},
  using $N\sim A^{-1}N_1$, we obtain
  \begin{equation*}
     |I(f,g_1,g_2)|
    \ls 
    \left(L_1L_2\right)^{\frac12}\|f\|_{L^2}\|g_1\|_{L^2}\|g_2\|_{L^2}.
  \end{equation*}
  The subcases where $\max\{L,L_1,L_2\}=L_i$ for $i=1,2$ follow in the
  same way by using \eref{eq:str-wave-schr} instead of
  \eref{eq:str-schr-schr}.

  In Case \eref{it:largeN} we also start with the subcase where
  $\max\{L,L_1,L_2\}=L$. Without any restriction in generality
assume also that $L_1 \leq L_2$.
Denoting
\[
\chi = 1_{\mathfrak{Q}^{A}_{j_1} \cap \mathfrak{P}_{N_1} \cap \mathfrak{S}_{L_1}}
 1_{\mathfrak{Q}^{A}_{j_2} \cap \mathfrak{P}_{N_2} \cap \mathfrak{S}_{L_2}}
\]
we use Cauchy-Schwarz to estimate
\begin{eqnarray*}
\fl \left|\int f(\zeta_1-\zeta_2)g_1(\zeta_1) g_2(\zeta_2)
 d\zeta_1 d\zeta_2\right| 
&\lesssim{} & \| \chi f(\zeta_1 - \zeta_2)\|_{L^2}
\|g_1(\zeta_1) g_2(\zeta_2)\|_{L^2} \\
&\lesssim{} & \sup_{\zeta_0 \in 
\mathfrak{P}_{N}\cap \mathfrak{W}^\pm_L}
|B(\zeta_0)|^\frac12 \|f\|_{L^2}\|g_1\|_{L^2}\|g_2\|_{L^2}
\end{eqnarray*}
where
\[
B(\zeta_0) = \{ \zeta_1 \; | \; \zeta_1 \in  \mathfrak{Q}^{A}_{j_1} \cap 
\mathfrak{P}_{N_1} \cap
\mathfrak{S}_{L_1}; \zeta_1 - \zeta_0 \in \mathfrak{Q}^{A}_{j_2} \cap \mathfrak{P}_{N_2} \cap
\mathfrak{S}_{L_2}\}.
\]
To bound the size of the set $B(\zeta_0)$ we observe that 
for $\zeta_0=(\xi_0,\tau_0)$ and $\zeta_1=(\xi_1,\tau_1)$ 
as above we must have $|\xi_{0,1}| \sim N$ and
\[
|\tau_1 - \xi_1^2| \lesssim L_1, \qquad |\xi_{1,2}| \lesssim \frac{N_1}A, \qquad |\tau_1-\tau_0+|\xi_1-\xi_0|^2|\sim L_2.
\]
Since $\partial_{\xi_{1,1}}(|\xi_1|^2-|\xi_1-\xi_0|^2)=2\xi_{0,1}$
which has size $N$, it follows that
\begin{equation}
|B(\xi_0,\tau_0)|\ls L_1 \frac{L_2}N \frac{N_1}{A}
\label{bsize}\end{equation}
and the conclusion of the Proposition follows.

Let us now assume that $\max\{L,L_1,L_2\}=L_1$; the subcase
when $\max\{L,L_1,L_2\}=L_2$ is similar. Using Cauchy-Schwarz
as above we obtain
\[
  |I(f,g_1,g_2)|
    \ls 
 \sup_{\zeta_1 \in \mathfrak{Q}^{A}_{j_1} \cap 
\mathfrak{P}_{N_1} \cap \mathfrak{S}_{L_1}}
|C(\zeta_1)|^\frac12
    \|f\|_{L^2}\|g_1\|_{L^2}\|g_2\|_{L^2},
\]
where
\[
C(\zeta_1) = \{ \zeta_2 \; | \; \zeta_2 \in  \mathfrak{Q}^{A}_{j_2} \cap 
\mathfrak{P}_{N_2} \cap \mathfrak{S}_{L_2}; \zeta_1 - \zeta_2 \in 
\mathfrak{P}_{N}\cap \mathfrak{W}^\pm_L\}.
\]
Setting $\underline{l}=\min\{L,L_2\}$ and
$\overline{l}=\max\{L,L_2\}$, we observe that 
given $\xi_2$, $\tau_2$ can only range in an interval of size
$\lesssim \underline{l}$. On the other hand, for $\xi_2$ we have the
restrictions
\[
 |\xi_{2,2}|\ls \frac{N_1}{A}, \qquad
 |\tau_1+|\xi_2|^2\pm |\xi_1-\xi_2||\ls
 \overline{l}.
\]
Since $|\partial_{\xi_{2,1}}(|\xi_2|^2\pm
|\xi_1-\xi_2|)|=2|\xi_{2,1}|\gs N_1$, we obtain
\begin{equation}\label{eq:setc}
|C(\zeta_1)| \lesssim  \underline{l}
\frac{\overline{l}}{N_1}\frac{N_1}{A}= \frac{LL_2}{A}
 \end{equation}
again concluding the proof of the Proposition.
\end{proof}

Next, we consider the case where the frequencies $\xi_1$ and $\xi_2$
are almost parallel.  This can be viewed as an almost one-dimensional
interaction.

\begin{proposition}[Parallel high-high interactions]\label{prop:paral_hh}
  Let $f,g_1,g_2\in L^2$, $\|f\|_{L^2}=\|g_1\|_{L^2}=\|g_2\|_{L^2}=1$
  such that
  \begin{equation*}
    \supp(f)\subset \mathfrak{P}_{N}\cap \mathfrak{W}^\pm_L ,\quad 
    \supp(g_k)\subset \mathfrak{Q}^{A}_{j_k} \cap \mathfrak{P}_{N_k} \cap \mathfrak{S}_{L_k}\quad (k=1,2),
  \end{equation*}
  with $1 \ll N \ls N_1 \sim N_2$. Assume that $A \sim N_1$
  and $|j_1-j_2|\leq 16$.  Then for all
  $L,L_1,L_2\geq 1$ we have
  \begin{equation}\label{eq:paral_hh}
 |I(f,g_1,g_2)|   
    \ls L_{1}^{\frac{5}{12}}L_{2}^{\frac{5}{12}}L^{\frac{5}{12}}\frac{1}{N^\frac12}
    \left( \frac{N}{N_1}\right)^{\frac14}
  \end{equation}
\end{proposition}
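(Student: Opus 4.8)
The plan is to exploit that the parallel regime is effectively one-dimensional, to read off an automatic \emph{lower} bound on the modulations from the resonance relation, and to conclude with three elementary Cauchy--Schwarz volume estimates. Since the Schr\"odinger and wave symbols are rotation invariant, after a rotation we may assume that the sectors $\Theta^A_{j_1},\Theta^A_{j_2}$ (and their antipodes) are centred on the first coordinate axis. As $A\sim N_1$ and $|j_1-j_2|\le16$, every $\xi_k$ occurring in $\supp g_k$ then satisfies $|\xi_{k,2}|\ls 1$, hence every difference $\xi_1-\xi_2$ occurring in $\supp f$ satisfies $|(\xi_1-\xi_2)_2|\ls1$; combined with $|\xi_1-\xi_2|\sim N\gg1$ this gives $|(\xi_1-\xi_2)_1|\sim N$. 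Thus the spatial frequencies are confined to a slab of unit transverse width. Moreover, on the relevant supports the identity
\[
|\xi_1|^2-|\xi_2|^2\mp|\xi_1-\xi_2|
=\bigl(\tau_1+|\xi_1|^2\bigr)-\bigl(\tau_2+|\xi_2|^2\bigr)-\bigl(\tau_1-\tau_2\pm|\xi_1-\xi_2|\bigr)
\]
forces the left-hand side $R$ to satisfy $|R|\ls L+L_1+L_2$, while near-collinearity of $\xi_1,\xi_2$ forces $\bigl||\xi_1|-|\xi_2|\bigr|\sim|\xi_1-\xi_2|\sim N$ (or, in the anti-collinear case, $N\sim N_1$) and $|\xi_1|+|\xi_2|\sim N_1$; hence $|R|\sim NN_1$. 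Consequently $I(f,g_1,g_2)=0$ unless $\max\{L,L_1,L_2\}\gs NN_1$, and in any case $\max\{L,L_1,L_2\}\gs N_1/N$.

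Next I would prove three volume bounds. Freezing $\zeta_0=\zeta_1-\zeta_2\in\supp f$ and applying Cauchy--Schwarz in the $f$-variable, $|I(f,g_1,g_2)|\ls\sup_{\zeta_0}|E(\zeta_0)|^{1/2}$, where $E(\zeta_0)=\{\zeta_2\in\supp g_2:\zeta_0+\zeta_2\in\supp g_1\}$; this set lies in an $O(1)$ interval in $\xi_{2,2}$, an interval of length $\ls\min\{L_1,L_2\}$ in $\tau_2$, and---subtracting the two Schr\"odinger symbols, so that $\partial_{\xi_{2,1}}\bigl(|\xi_1|^2-|\xi_2|^2\bigr)=2(\xi_1-\xi_2)_1$ has size $\sim N$---an interval of length $\ls(L_1+L_2)/N$ in $\xi_{2,1}$, whence $|I(f,g_1,g_2)|\ls(L_1L_2/N)^{1/2}$. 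Freezing instead a variable in $\supp g_1$ or $\supp g_2$ and subtracting a wave symbol from a Schr\"odinger symbol, the decisive derivative becomes $\partial_{\xi_{1,1}}\bigl(|\xi_1|^2\mp|\xi_1-\xi_2|\bigr)=2\xi_{1,1}+O(1)$, of size $\sim N_1$, and the same counting yields
\[
|I(f,g_1,g_2)|\ls\Bigl(\frac{L_1L}{N_1}\Bigr)^{1/2}
\qquad\mbox{ and }\qquad
|I(f,g_1,g_2)|\ls\Bigl(\frac{L_2L}{N_1}\Bigr)^{1/2}.
\]

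Finally I would combine these by a short case analysis on $\max\{L,L_1,L_2\}$, recalling that $\frac1{N^{1/2}}(N/N_1)^{1/4}=(NN_1)^{-1/4}$. If $L=\max$, then $L\gs N_1/N$ and, since $L_1,L_2\le L$, one has $(L_1L_2)^{1/12}(N_1/N)^{1/4}\ls L^{5/12}$, so the first bound gives $(L_1L_2/N)^{1/2}\ls(LL_1L_2)^{5/12}(NN_1)^{-1/4}$. If $L_1=\max$ and (say) $L_2\le L_1$, then $(L_2L)^{1/12}\le L_1^{1/6}\le L_1^{5/12}$ and $N\le N_1$, so the bound $(L_2L/N_1)^{1/2}$ already dominates $(LL_1L_2)^{5/12}(NN_1)^{-1/4}$; the case $L_2=\max$ is symmetric. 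The main obstacle---indeed the only non-routine point---is the \emph{lower} bound $|R|\sim NN_1$: it is the structural fact compensating for the total loss of transversality in the parallel regime, which makes the nonlinear Loomis--Whitney estimate underlying Proposition~\ref{prop:trans_low_mod} inapplicable; once it forces the interaction into the high-modulation regime, even the crudest Cauchy--Schwarz counting comfortably beats the target exponent $\frac5{12}$. The rotation, the Fubini/Cauchy--Schwarz volume counts, and the arithmetic with the exponents are then all routine.
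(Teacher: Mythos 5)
Your argument is correct and follows essentially the same route as the paper's: the resonance dichotomy (either $N\sim N_1$ or $NN_1\ls\max\{L,L_1,L_2\}$, recorded in the paper as \eref{dich}), followed by the three Cauchy--Schwarz volume counts (the paper's \eref{bsize} and \eref{eq:setc} specialized to $A\sim N_1$) and a case analysis on the largest modulation; the exponent arithmetic checks out in each case. One intermediate assertion is false as stated, however: it is \emph{not} true that $I(f,g_1,g_2)=0$ unless $\max\{L,L_1,L_2\}\gs NN_1$. The sectors $\mathfrak{Q}^A_j$ contain antipodal wedges and $\angle(\cdot,\cdot)$ measures the angle between lines, so the anti-parallel configuration $\xi_1\approx(N_1+1,0)$, $\xi_2\approx(-N_1,0)$ is admissible here; for it $|\xi_1|^2-|\xi_2|^2-|\xi_1-\xi_2|=0$, so all three modulations can be $O(1)$ while $N\sim N_1\gg 1$. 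This is precisely the resonant interaction behind the counterexamples of Section \ref{sect:counter}, and it is not ``forced into the high-modulation regime'' --- your closing paragraph mischaracterizes the mechanism. Your proof nevertheless survives because the case analysis only invokes the correct weaker facts $\max\{L,L_1,L_2\}\gs N_1/N$ (trivial when $N\sim N_1$) and $N\ls N_1$; in the resonant configuration the estimate holds because $(N/N_1)^{1/4}\sim 1$ and the volume count $(L_1L_2/N)^{1/2}$ already supplies the full $N^{-1/2}\sim(NN_1)^{-1/4}$, with no room to spare --- consistent with the sharpness of the exponent $\frac{5}{12}$.
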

\begin{proof}
  After a rotation we may assume that $j_1=0$.  Due to the
  localization of the wedges we observe that 
$|\xi_{0,2}|,|\xi_{1,2}|,|\xi_{2,2}| \lesssim 1$.
This shows that $|\xi_{1,1} -\xi_{2,1}|= |\xi_{0,1}| \sim N$,
$|\xi_{1,1}|,|\xi_{2,1}| \sim N_1$. In addition, we must have
\[
||\xi_1 - \xi_2| \pm (|\xi_1|^2 - |\xi_2|^2)| \lesssim \max\{ L,L_1,L_2\}
\]
If $N \ll N_1$ then the above left hand side must have size
$N N_1$. Thus we have established the following dichotomy:
\begin{equation}
\mbox{ either}\; N \sim N_1 \; \mbox{ or } \;  N N_1  
\lesssim \max\{ L,L_1,L_2\}.
\label{dich}\end{equation}
Then we can use the same argument as in Case
\eref{it:largeN} of the proof of Proposition
\ref{prop:trans_high_mod}. 
 
If $L=\max\{L_1,L_2,L\}$ then the bound \eref{bsize} holds, and
corresponding to the two cases in \eref{dich} we only need to compute
\[
L_1 L_2 \frac{1}{N} \frac{N_1}{A} = L_1 L_2 \frac{1}{N} \lesssim  
L_1^\frac23 L_2^\frac23 L^\frac23 \frac{1}{N}\frac{N}{N_1}
\]
respectively
\[
L_1 L_2 \frac{1}{N} \frac{N_1}{A} = L_1^\frac56 L_2^\frac56 
L^{\frac13}  \frac{1}{N} \lesssim  L_1^\frac56 L_2^\frac56 
L^{\frac56}  \frac{1}{N} \frac{1}{(NN_1)^\frac12} 
\]
both of which are stronger than needed.

On the other hand if $L_1 =  \max\{ L,L_1,L_2\}$ then 
\eref{eq:setc} holds, and we conclude as above taking into account
the two cases in \eref{dich}. The case $L_2 =  \max\{ L,L_1,L_2\}$ is similar.
\end{proof}

The next proposition covers the case of high-low interactions.
\begin{proposition}[high-low interactions]\label{prop:high-low}
 Let  $f,g_1,g_2\in L^2$ be functions with
 $\|f\|_{L^2}=\|g_1\|_{L^2}=\|g_2\|_{L^2}=1$
  such that
  \begin{equation*}
    \supp(f)\subset \mathfrak{P}_{N}\cap \mathfrak{W}^\pm_L ,\quad 
    \supp(g_k)\subset \mathfrak{P}_{N_k} \cap \mathfrak{S}_{L_k}\quad (k=1,2),
  \end{equation*}
  with $1\leq N_1\ll N_2$ or $1\leq N_2\ll N_1$.  Then, for all
  $L,L_1,L_2\geq 1$ we have
  \begin{equation}\label{eq:high-low}
  |I(f,g_1,g_2)|  
    \ls L_{1}^{\frac{5}{12}}L_{2}^{\frac{5}{12}}L^{\frac{5}{12}}N^{-\frac12}
    \min\left\{\frac{N_1}{N_2},\frac{N_2}{N_1}\right\}^\frac{1}{6}
  \end{equation}
\end{proposition}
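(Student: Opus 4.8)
The plan is to reduce the trilinear integral $I(f,g_1,g_2)$ to a product of bilinear $L^2$-bounds of the type furnished by Proposition \ref{prop:bilinear-str}, and to interpolate the resulting estimates against the trivial $L^1$-type bound coming from measure of support, so as to produce the fractional exponents $\tfrac{5}{12}$ in $L,L_1,L_2$ and $\tfrac16$ in the frequency ratio. By symmetry we may assume $1\le N_1\ll N_2$; the localization of $f$ on $\mathfrak{P}_N\cap\mathfrak{W}^\pm_L$ and the convolution constraint $\xi=\xi_1-\xi_2$ (after the usual change of variables $\zeta_2\mapsto-\zeta_2$ to obtain a genuine convolution) force $N\sim N_2$, since $|\xi|=|\xi_1-\xi_2|$ with $|\xi_1|\sim N_1\ll N_2\sim|\xi_2|$. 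So effectively the interaction is $(\text{low }g_1)\times(\text{high }g_2)\to(\text{high }f)$.

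First I would write $|I(f,g_1,g_2)|\le \|f g_2^{\,\sharp}\|_{L^2}\,\|g_1\|_{L^2}$ after pairing $f$ with $g_2$ (here $g_2^\sharp$ denotes the appropriately reflected/conjugated copy), or alternatively $|I|\le\|f g_1^\sharp\|_{L^2}\|g_2\|_{L^2}$; and then apply the bilinear Wave$\times$Schrödinger estimate \eref{eq:str-wave-schr} to the pair $(f,g_2)$ — note $f$ is on a wave cone, $g_2$ on a Schrödinger paraboloid — which gives a gain $(\min\{N,N_2\}/N_2)^{1/2}\sim 1$, not yet what we want, but also a factor $L^{1/2}\min\{L,L_2\}^{1/2}$ wait — more usefully, pairing $g_1$ (low, Schrödinger) with $f$ (wave) via \eref{eq:str-wave-schr} or \eref{eq:str-wave-schr-gen} yields a gain $(\min\{N_1,N\}/N)^{1/2}\sim (N_1/N_2)^{1/2}$, together with $L^{1/2}L_1^{1/2}$, and then $\|g_2\|_{L^2}=1$ closes the Cauchy–Schwarz. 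This produces a bound of the form $(N_1/N_2)^{1/2}(L_1 L)^{1/2}$, which is stronger than \eref{eq:high-low} in the $L,L_1$ exponents but carries no $L_2$-decay. Symmetrically, pairing the other way (or directly estimating the volume of the resolvent set as in Case \eref{it:largeN} of Proposition \ref{prop:trans_high_mod}) gives a bound controlling a different pair of modulations. The key step is then to take geometric combinations of three such estimates — one for each choice of which modulation is "sacrificed" in the Cauchy–Schwarz — with weights $\tfrac5{6}$ assigned appropriately, together with interpolation against the crude bound $|I|\lesssim (L L_1 L_2)^{1/2} N^{-1/2}(N_1/N_2)^{1/2}$-ish obtained by a straightforward support-measure estimate; balancing exponents produces $L_1^{5/12}L_2^{5/12}L^{5/12}$ and, after optimizing the frequency ratio through the same interpolation, the power $(N_1/N_2)^{1/6}$. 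Concretely, one expects to combine two estimates of type $A_i^{1/2}$ and one of the "all-$L$'s" type $A_0^{1/2}$ via $A_0^{1/6}A_1^{5/12}A_2^{5/12}$ when $A_0 = L L_1 L_2$-weighted and $A_1, A_2$ carry $(N_1/N_2)$-gains, so that each of $L,L_1,L_2$ ends up to the power $\tfrac16\cdot\tfrac12 + \tfrac5{12}\cdot\tfrac12\cdot 2 \cdots$ — the exact bookkeeping being the routine part.

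The main obstacle I anticipate is not any single inequality but the interpolation bookkeeping: one must verify that the three (or four) elementary bilinear/volume estimates available — the two asymmetric Strichartz pairings plus the direct volume computation $|B(\zeta_0)|$ as in \eref{bsize}, plus possibly a Schrödinger$\times$Schrödinger pairing \eref{eq:str-schr-schr} when it is $g_1,g_2$ that are paired — actually span, under geometric averaging, an exponent vector that dominates $(5/12,5/12,5/12)$ in the modulations while simultaneously yielding the claimed $1/6$ power of $\min\{N_1/N_2,N_2/N_1\}$. A secondary subtlety is the low-frequency endpoint $N_1\sim 1$ (or $N_2\sim 1$): there the gains of the form $(\min\{d,N_i\}/N_i)^{1/2}$ degrade, but since then $\min\{N_1/N_2,N_2/N_1\}\sim N_2^{-1}$ (resp. $N_1^{-1}$) is itself bounded, the claimed estimate only asks for $N^{-1/2}$ up to a bounded ratio factor, and the coarsest $L^2\times L^\infty$ bound combined with \eref{eq:str-wave-schr-gen} with $d\sim 1$ suffices; this case should be dispatched separately at the start. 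Apart from these, the argument is a direct specialization of the techniques already developed for Propositions \ref{prop:trans_high_mod} and \ref{prop:paral_hh}.
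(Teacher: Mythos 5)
Your reduction to $N\sim N_2$ (in the case $N_1\ll N_2$) and your instinct to pair the factors via the bilinear estimates of Proposition \ref{prop:bilinear-str} are both correct, but the proposal is missing the one idea that actually drives the paper's proof: the resonance identity. On the support of the integrand one has
\[
\max\{L,L_1,L_2\}\gs \bigl||\xi_1|^2-|\xi_2|^2\pm|\xi_1-\xi_2|\bigr|\gs N_2^2 ,
\]
and it is this lower bound on the \emph{largest} modulation — not interpolation — that produces both the factor $N^{-1/2}$ and the exponents $\tfrac5{12}$. Concretely, in each case one applies a single bilinear Strichartz estimate to the two factors with the \emph{smaller} modulations (e.g.\ if $L=\max$, the Schr\"odinger--Schr\"odinger bound \eref{eq:str-schr-schr} gives $|I|\ls (L_1L_2)^{1/2}(N_1/N_2)^{1/2}$), and then converts the missing modulation factor into frequency decay via $1\leq L^{5/12}N_2^{-5/6}$. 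Your interpolation scheme cannot replace this: the three available bilinear bounds each carry only two modulations to the power $\tfrac12$, so any geometric average has total modulation exponent $1$, and while that can be arranged to stay below $(\tfrac5{12},\tfrac5{12},\tfrac5{12})$ coordinatewise, the accompanying frequency gain is at best $(N_1/N_2)^{1/3}$, which does \emph{not} dominate the required $N_2^{-1/2}(N_1/N_2)^{1/6}$ (take $N_1=1$). The ``crude bound'' $(LL_1L_2)^{1/2}N^{-1/2}(N_1/N_2)^{1/2}$ you propose to interpolate against is not available from any straightforward support-measure estimate — a single Cauchy--Schwarz never yields all three modulations to the power $\tfrac12$ — so the bookkeeping you defer as routine is in fact where the proof lives, and it does not close as described.

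Two smaller corrections. First, pairing the low-frequency Schr\"odinger piece $g_1$ with the wave piece $f$ via \eref{eq:str-wave-schr} gives the gain $(\min\{N,N_1\}/N_1)^{1/2}\sim 1$ (the Schr\"odinger frequency sits in the denominator), not $(N_1/N_2)^{1/2}$; the ratio gain in that configuration comes instead from \eref{eq:str-wave-schr-gen} after first tiling $f$ and $g_2$ into cubes of side $N_1$ by orthogonality (this is the paper's Case $L_1=\max$). Second, in the case $L_2=\max$ the bound $L^{1/2}L_1^{1/2}$ carries no frequency gain and only suffices when $L_1\leq N_1^2$; in the elliptic regime $L_1>N_1^2$ the paper switches to Young's inequality, $|I|\ls L^{1/2}N_1$, again closed by the resonance identity. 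Your proposal does not account for this subcase.
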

\begin{proof}
  Assume first that $N_1\ll N_2$. Then, the integral vanishes unless
  $N_2 \sim N$ and
  \begin{equation}\label{eq:res_in}
    \max\{L,L_1,L_2\}\gs ||\xi_1|^2-|\xi_2|^2\pm
 |\xi_1-\xi_2||\gs N_2^2.
  \end{equation}
We consider three cases:

{\it Case 1:} $L=\max\{L,L_1,L_2\}$. Then by the bilinear Strichartz
estimate \eref{eq:str-schr-schr} we have
  \begin{eqnarray*}
    |I(f,g_1,g_2)| 
    \ls 
    \|f\|_{L^2}\|\mathcal{F}^{-1}g_1\overline{\mathcal{F}^{-1}g_2}\|_{L^2}
    \ls   L_1^\frac12
    L_2^\frac12  \left(\frac{N_1}{N_2}\right)^\frac12
  \end{eqnarray*}
  Then the claim follows due to \eref{eq:res_in}.

  {\it Case 2:} $L_1=\max\{L,L_1,L_2\}$. Since $g_1$ is localized in
  frequency in a cube of size $N_1$, by orthogonality the estimate
  reduces to the case when $f$ and $g_2$ are frequency localized in
  cubes of size $N_1$.  Then we use bilinear $L^2$ estimate
  \eref{eq:str-wave-schr-gen} with $d=N_1$ to obtain
  \begin{eqnarray*}
   |I(f,g_1,g_2)|    
    \ls  \|g_1\|_{L^2}\|\mathcal{F}^{-1} f\mathcal{F}^{-1} g_2\|_{L^2}
    \ls    L^\frac12
    L_2^\frac12  \left(\frac{N_1}{N_2}\right)^\frac12
  \end{eqnarray*}
 and conclude again using \eref{eq:res_in}.

 {\it Case 3:} $L_2=\max\{L,L_1,L_2\}$.  On 
  one hand,  by \eref{eq:str-wave-schr} we obtain the bound
  \begin{eqnarray*}
    |I(f,g_1,g_2)|   \ls 
    \|g_2\|_{L^2}\|\overline{\mathcal{F}^{-1}f}\mathcal{F}^{-1}g_1\|_{L^2}
    \ls L^\frac12 L_1^\frac12
  \end{eqnarray*}
  which implies \eref{eq:high-low} if additionally
  $L_1\leq N_1^2$ holds.

  On the other hand, by Young's inequality we have
  \begin{equation*}
    |I(f,g_1,g_2)| 
    \leq  \|g_2\|_{L^2}\|f\|_{L^2_\xi L^1_\tau }\|g_1\|_{L^1_\xi L^2_\tau }
    \ls  L^\frac12 N_1.
  \end{equation*} which, combined with \eref{eq:res_in}, suffices
  in the elliptic regime $L_1>N_1^2$.

  The case $N_1\gg N_2$ follows by the same arguments.
\end{proof}

Finally, we deal with the case where the wave frequency is very small.
\begin{proposition}[Very small wave
  frequency]\label{prop:very_small_wave_freq}
  Let $f,g_1,g_2\in L^2$ with
  $\|f\|_{L^2}=\|g_1\|_{L^2}=\|g_2\|_{L^2}=1$ such that
  \begin{equation*}
    \supp(f)\subset \mathfrak{P}_{N}\cap \mathfrak{W}^\pm_L ,\quad 
    \supp(g_k)\subset \mathfrak{P}_{N_k} \cap \mathfrak{S}_{L_k}\quad (k=1,2),
  \end{equation*}
  and assume that $N\ls 1$. Then,
  \begin{equation}\label{eq:very_small_wave_freq}
  |I(f,g_1,g_2)|  
    \ls L^{\frac13}L_{1}^{\frac{1}{3}}L_{2}^{\frac{1}{3}}.
  \end{equation}
\end{proposition}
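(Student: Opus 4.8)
The plan is to exploit the full symmetry of the trilinear form under the two bilinear Strichartz estimates from Proposition~\ref{prop:bilinear-str}, together with a three--term interpolation. First I would undo the Fourier transforms and rewrite $I(f,g_1,g_2)$, up to the harmless Plancherel constant, as a spacetime integral $\int_{\R^2\times\R} F\,G_1\,G_2^{\sharp}$, where $F,G_1,G_2^{\sharp}$ are, respectively, the inverse Fourier transforms of $f,g_1,g_2$ (the last one up to a reflection and a complex conjugation, which are immaterial for the estimates below). By construction $F$ is a wave packet with $\supp\mathcal F F\subset\mathfrak{P}_N\cap\mathfrak{W}^{\pm}_L$, while $G_1$ and $G_2^{\sharp}$ are Schr\"odinger packets with the frequency and modulation supports dictated by $g_1$ and $g_2$. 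Grouping two of the three factors and applying the Cauchy--Schwarz inequality then yields the three bounds $|I(f,g_1,g_2)|\ls\|F\|_{L^2}\|G_1G_2^{\sharp}\|_{L^2}$, $|I(f,g_1,g_2)|\ls\|G_1\|_{L^2}\|FG_2^{\sharp}\|_{L^2}$, and $|I(f,g_1,g_2)|\ls\|G_2^{\sharp}\|_{L^2}\|FG_1\|_{L^2}$.

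Next I would insert the bilinear estimates, using the complex conjugate versions permitted by Proposition~\ref{prop:bilinear-str}. The first $L^2$ norm is a Schr\"odinger--Schr\"odinger product, so \eref{eq:str-schr-schr} gives $\|G_1G_2^{\sharp}\|_{L^2}\ls(\min\{N_1,N_2\}/\max\{N_1,N_2\})^{1/2}L_1^{1/2}L_2^{1/2}$; the other two are wave--Schr\"odinger products, so \eref{eq:str-wave-schr} gives $\|FG_2^{\sharp}\|_{L^2}\ls(\min\{N,N_2\}/N_2)^{1/2}L^{1/2}L_2^{1/2}$ and $\|FG_1\|_{L^2}\ls(\min\{N,N_1\}/N_1)^{1/2}L^{1/2}L_1^{1/2}$. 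Since $\|F\|_{L^2}=\|G_1\|_{L^2}=\|G_2^{\sharp}\|_{L^2}=1$ and every frequency ratio above is $\leq 1$, I would simply discard the ratios and record the three inequalities $|I(f,g_1,g_2)|\ls(L_1L_2)^{1/2}$, $|I(f,g_1,g_2)|\ls(LL_2)^{1/2}$, and $|I(f,g_1,g_2)|\ls(LL_1)^{1/2}$.

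Finally, multiplying the three inequalities and taking a cube root gives $|I(f,g_1,g_2)|^3\ls(L_1L_2)^{1/2}(LL_2)^{1/2}(LL_1)^{1/2}=LL_1L_2$, which is precisely \eref{eq:very_small_wave_freq}. There is essentially no serious obstacle here, as this is one of the crudest estimates of the section: the hypothesis $N\ls 1$ appears not to be needed for this particular bound and serves only to ensure, via $|\xi_1-\xi_2|\ls 1$ (hence $N_1\sim N_2$, or $N_1,N_2\ls 1$), that the discarded frequency ratios are in fact small in the regime where the proposition is later invoked. The only points that warrant a little care are the bookkeeping of the reflection and conjugation when passing from $I(f,g_1,g_2)$ to the spacetime integral, and checking that the last two Cauchy--Schwarz groupings genuinely pair the wave packet $F$ with a single Schr\"odinger packet, so that \eref{eq:str-wave-schr} applies directly rather than only the cube version \eref{eq:str-wave-schr-gen}.
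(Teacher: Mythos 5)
Your proof is correct and follows essentially the same route as the paper, which simply applies the bilinear Strichartz estimates \eref{eq:str-schr-schr} and \eref{eq:str-wave-schr} after Cauchy--Schwarz; the only cosmetic difference is that the paper selects the one grouping that omits the maximal modulation (whose bound is then $\leq (LL_1L_2)^{1/3}$), whereas you multiply all three bounds and take the cube root, which amounts to the same arithmetic. Your side remark that the hypothesis $N\ls 1$ is not needed for this particular bound is also accurate.
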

\begin{proof}
  Depending on which of $L,L_1,L_2$ is maximal we apply the bilinear
  Strichartz refinements \eref{eq:str-schr-schr} or
  \eref{eq:str-wave-schr} and the result follows.
\end{proof}

We are ready to provide a proof of our main trilinear estimate
\eref{eq:trilinear}.

\begin{proof}[Proof of Proposition \ref{prop:trilinear}]
  By definition of the norms it is enough to consider functions with
  non-negative Fourier transform. We dyadically decompose
  \begin{equation*}
    u_i=\sum_{N_i,L_i\geq 1} S_{L_i}P_{N_i}u_i \; ,\quad v=\sum_{N,L\geq 1 } W^\pm_{L}P_{N} v.
  \end{equation*}
  Setting $g^{L_i,N_i}_i=\mathcal{F}S_{L_i}P_{N_i}u_i$ and
  $f^{L,N}=\mathcal{F}W^\pm_{L}P_{N} v$, we observe
  \begin{eqnarray*}
   I( \mathcal{F}v,
    \mathcal{F}u_1, \mathcal{F}u_2)
    = {} \sum_{N,N_1,N_2\geq 1} \sum_{L,L_1,L_2\geq 1} I(
    f^{L,N}, g_1^{L_1,N_1},
    g_2^{L_2,N_2}).
  \end{eqnarray*}

  \textit{Case 1:} high-high-low interactions, i.e. $N_1\sim N_2\gs N\geq 2^{10}$.\\
  We fix $M=2^{-4}N_1$ and use the decomposition \eref{eq:dyadic_angular} to write
  \begin{eqnarray*}
 \fl   I(f^{L,N},g_1^{L_1,N_1},g_2^{L_2,N_2})&={}&\sum_{0\leq
        j_1,j_2\leq M-1 \atop |j_1-j_2|\leq 16}
    I(f^{L,N},g_1^{L_1,N_1,M,j_1},g_2^{L_2,N_2,M,j_2})\\
    &&+\sum_{64\leq A \leq M}\sum_{0\leq j_1,j_2\leq
        A-1 \atop 16\leq |j_1-j_2|\leq 32} 
    I(f^{L,N},g_1^{L_1,N_1,A,j_1},g_2^{L_2,N_2,A,j_2})
  \end{eqnarray*}
  where
  $g_i^{L_i,N_i,A,j_i}=g_i^{L_i,N_i}|_{\mathfrak{Q}_{j_i}^{A}}$. We
  apply Proposition \ref{prop:paral_hh} to the first term and 
use Cauchy-Schwarz to obtain
  \begin{eqnarray*}
\fl & \sum_{0\leq j_1,j_2\leq M-1 \atop |j_1-j_2|\leq 16}
    I(f^{L,N},g_1^{L_1,N_1,M,j_1},g_2^{L_2,N_2,M,j_2})\\
\ls{}&
    \frac{(LL_1L_2)^{\frac{5}{12}}}{N^{\frac12}}\left(\frac{N}{N_1}\right)^{\frac14}\|f^{L,N}\|_{L^2}
    \sum_{0\leq j_1,j_2\leq M-1 \atop |j_1-j_2|\leq 16}\|g_1^{L_1,N_1,M,j_1}\|_{L^2}\|g_2^{L_2,N_2,M,j_2}\|_{L^2}\\
    \ls{}&
    \frac{(LL_1L_2)^{\frac{5}{12}}}{N^{\frac12}}\left(\frac{N}{N_1}\right)^{\frac14}\|f^{L,N}\|_{L^2}\|g_1^{L_1,N_1}\|_{L^2}\|g_2^{L_2,N_2}\|_{L^2}.
  \end{eqnarray*}

  Concerning the second term, we split the sum with respect to $A$ into
  two parts according to the quantity
\[
\alpha:=2^{-4}\min\left\{\left(\frac{N_1}{N}\right)^{\frac12}N_1\max\{L,L_1,L_2\}^{-\frac12},N_1\right\}.
\]

For the part where $64\leq A\leq \alpha$ we apply Proposition
\ref{prop:trans_low_mod} and obtain
\begin{eqnarray*}
\fl S_1&:=\sum_{64\leq A \leq \alpha}\sum_{0\leq
      j_1,j_2\leq A-1 \atop 16\leq |j_1-j_2|\leq 32}
  I(f^{L,N},g_1^{L_1,N_1,A,j_1},g_2^{L_2,N_2,A,j_2})\\
 \fl &\ls \left(\frac{LL_1L_2}{N_1}\right)^{\frac{1}{2}}
  \|f^{L,N}\|_{L^2}\!\! \sum_{64\leq A \leq
    \alpha}\frac{A^{\frac12}}{N_1^{\frac12}}
  \sum_{0\leq j_1,j_2\leq A-1 \atop 16\leq |j_1-j_2|\leq
      32}  \|g_1^{L_1,N_1,A,j_1}\|_{L^2}\|g_2^{L_2,N_2,A,j_2}\|_{L^2}.
\end{eqnarray*}
Then, we use Cauchy-Schwarz with respect to $j_1,j_2$
\begin{eqnarray*}
\fl  S_1 &\ls{}\left(\frac{LL_1L_2}{N_1}\right)^{\frac{1}{2}}
  \|f^{L,N}\|_{L^2}\|g_1^{L_1,N_1}\|_{L^2}\|g_2^{L_2,N_2}\|_{L^2}\sum_{64\leq A \leq \alpha}\frac{A^{\frac12}}{N_1^{\frac12}}\\
\fl  &\ls{}(LL_1L_2)^{\frac{5}{12}}N^{-\frac12}\left(\frac{N}{N_1}\right)^{\frac14}\|f^{L,N}\|_{L^2}\|g_1^{L_1,N_1}\|_{L^2}\|g_2^{L_2,N_2}\|_{L^2},
\end{eqnarray*}
due to the property of the dyadic sum $\sum_{64\leq A\leq
  \alpha}A^{\frac12}\ls \alpha^\frac12$.

For the part where $\alpha\leq A\leq N_1$ we use Proposition
\ref{prop:trans_high_mod} and obtain
\begin{eqnarray*}
 \fl S_2&:=\sum_{\alpha\leq A \leq M}\sum_{0\leq
      j_1,j_2\leq A-1 \atop 16\leq |j_1-j_2|\leq 32}
  I(f^{L,N},g_1^{L_1,N_1,A,j_1},g_2^{L_2,N_2,A,j_2})\\
  \fl &\ls\frac{L^\frac12L_1^\frac12 L_2^\frac12
    \|f^{L,N}\|_{L^2}}{\max\{L_1,L_2,L\}^\frac12
    N^\frac12}\sum_{\alpha \leq A \leq
    M}\frac{N_1^\frac12}{A^{\frac12}} \sum_{0\leq
      j_1,j_2\leq A-1 \atop 16\leq |j_1-j_2|\leq 32}
  \|g_1^{L_1,N_1,A,j_1}\|_{L^2}\|g_2^{L_2,N_2,A,j_2}\|_{L^2}.
\end{eqnarray*}
As above, we use Cauchy-Schwarz with respect to $j_1,j_2$ and obtain
\begin{eqnarray*}
\fl  S_2&\ls{}\frac{L^\frac12L_1^\frac12 L_2^\frac12
    \|f^{L,N}\|_{L^2}\|g_1^{L_1,N_1}\|_{L^2}\|g_2^{L_2,N_2}\|_{L^2}}{\max\{L_1,L_2,L\}^\frac12
    N^\frac12}
  \sum_{\alpha \leq A \leq M}\frac{N_1^{\frac12}}{A^{\frac12}}\\
\fl  &\ls{}\frac{(LL_1L_2)^{\frac{5}{12}}}{N^{1/2}}\left(\frac{N}{N_1}\right)^{\frac14}\|f^{L,N}\|_{L^2}\|g_1^{L_1,N_1}\|_{L^2}\|g_2^{L_2,N_2}\|_{L^2},
\end{eqnarray*}
because of $\sum_{\alpha \leq A\leq M} A^{-\frac12}\ls
\alpha^{-\frac12}$.

{\it Case 2}: very small wave frequency, i.e. $N\ls 1$. In this case,
either $N_1\sim N_2$ or $N,N_1,N_2\ls 1$ and we apply Proposition
\ref{prop:very_small_wave_freq} and arrive at the bound
\eref{eq:very_small_wave_freq}

{\it Case 3}: high-low interactions, i.e. $N_1\ll N_2$ or $N_1\gg
N_2$. We apply Proposition \ref{prop:high-low} and obtain the bound
\eref{eq:high-low}.

To summarize, we obtain in any case the weakest of all three bounds,
namely
\begin{eqnarray*}
\fl  &I(f^{L,N},g_1^{L_1,N_1},g_2^{L_2,N_2})\\
\fl  \ls{}&
  (LL_1L_2)^{\frac{5}{12}}\min\left\{\frac{N}{N_1},\frac{N_1}{N_2},\frac{N_2}{N_1}\right\}^{\frac16}
  \frac{\|f^{L,N}\|_{L^2}}{N^{\frac12}}\|g_1^{L_1,N_1}\|_{L^2}\|g_2^{L_2,N_2}\|_{L^2},
\end{eqnarray*}
which we dyadically sum with respect to $L,L_1,L_2\geq 1$. Then, we
use that for non-vanishing contributions we must have $N\ls N_1 \sim
N_2$ or $N_1\ls N\sim N_2$ or $N_2\ls N$ and the prefactor enables us
to control the sum by the corresponding dyadic $\ell^2$-norms.
\end{proof}

\section{Linear estimates and the proof of Theorem \ref{main}}\label{sect:proof}
\noindent
Before we prove Theorem \ref{main} we present some linear estimates which are well-known at least in the case of standard Bourgain spaces, see e.g. \cite[Section 2]{ginibre_cauchy_1997}.

We define the 1d inhomogeneous Besov norms
\[
\|g\|_{B^b_{2,1}}=\sum_{L\geq 1}L^b \|P_L g\|_{L^2},\quad
\|g\|_{B^b_{2,\infty}}=\sup_{L\geq 1} L^b \|P_L g\|_{L^2}.
\]
For $0<T\leq 1$ we define a smooth cutoff function for the interval $[0,T]$ as $\psi_T(t)=\psi(t/T)$ and we define the Fourier localization operator $P_{\leq T^{-1}}:=\sum_{1\leq L\leq T^{-1}}P_{L}$, cp. Section \ref{sect:not}.
\begin{lemma}\label{lem:norm_equiv}
Let $0<b \leq \frac12$. For all $g \in \mathcal{S}(\R)$ and $T\in (0,1]$ we have
  \begin{equation}\label{eq:norm_equiv}
\|g\psi_T\|_{B^b_{2,1}}\sim T^{-b} \|P_{\leq T^{-1}}(g \psi_T)\|_{L^2}+\sum_{L >T^{-1}} L^{b}\|P_L (g\psi_T)\|_{L^2},
  \end{equation}
where the implicit constants are independent of $T$ and $g$.
\end{lemma}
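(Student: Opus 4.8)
The plan is to split the Littlewood--Paley pieces of $g\psi_T$ according to whether $L$ is above or below the natural frequency scale $T^{-1}$ of the cutoff, and to observe that the low-frequency tail behaves like a single block. First I would write
\[
\|g\psi_T\|_{B^b_{2,1}}=\sum_{1\leq L\leq T^{-1}}L^b\|P_L(g\psi_T)\|_{L^2}+\sum_{L>T^{-1}}L^b\|P_L(g\psi_T)\|_{L^2},
\]
so that only the first sum must be shown to be comparable to $T^{-b}\|P_{\leq T^{-1}}(g\psi_T)\|_{L^2}$. The upper bound $\sum_{1\leq L\leq T^{-1}}L^b\|P_L(g\psi_T)\|_{L^2}\ls T^{-b}\sum_{1\leq L\leq T^{-1}}\|P_L(g\psi_T)\|_{L^2}$ is immediate from $L\leq T^{-1}$; the remaining point is to absorb the extra factor coming from summing $\|P_L\|_{L^2}$ over $O(\log T^{-1})$ dyadic blocks, which is exactly where one must use that $b>0$ is \emph{not} available — here $b$ may be small — so a crude triangle inequality loses a logarithm. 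The right way is a Cauchy--Schwarz/almost-orthogonality argument: $\sum_{1\leq L\leq T^{-1}}\|P_L h\|_{L^2}$ is \emph{not} in general $\ls\|P_{\leq T^{-1}}h\|_{L^2}$ for arbitrary $h$, but it is when $h=g\psi_T$ because $\widehat{g\psi_T}=\widehat g\ast\widehat{\psi_T}$ and $\widehat{\psi_T}(\tau)=T\widehat\psi(T\tau)$ is an $L^1$-normalized bump at scale $T^{-1}$; so $g\psi_T$ is ``smooth at scale $T^{-1}$'' and its dyadic pieces below that scale are controlled by the single block.

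Concretely, I would prove the two inequalities separately. For ``$\ls$'': fix $L\leq T^{-1}$ and write $P_L(g\psi_T)=P_L\big(P_{\leq T^{-1}}(g\psi_T)\big)+P_L\big(P_{>T^{-1}}(g\psi_T)\big)$; the second term is harmless after noting that $P_L P_{L'}$ is nonzero only for $L'\sim L$, but since $L\leq T^{-1}<L'$ these are disjoint for $L'\gg T^{-1}$ and there are only $O(1)$ borderline terms, so $\sum_{L\leq T^{-1}}L^b\|P_L(g\psi_T)\|_{L^2}\ls T^{-b}\|P_{\leq T^{-1}}(g\psi_T)\|_{L^2}+(\text{a few blocks near }T^{-1})$, and the stray blocks near $T^{-1}$ are dominated by the second sum on the right-hand side of \eref{eq:norm_equiv}. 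Actually the cleanest route is: $T^{-b}\|P_{\leq T^{-1}}(g\psi_T)\|_{L^2}=T^{-b}\|\sum_{L\leq T^{-1}}P_L(g\psi_T)\|_{L^2}\leq T^{-b}\sum_{L\leq T^{-1}}\|P_L(g\psi_T)\|_{L^2}\le \sum_{L\leq T^{-1}}L^{-b}T^{-b}\cdot L^{b}\|P_L(g\psi_T)\|_{L^2}$, and since $L\le T^{-1}$ we have $L^{-b}T^{-b}\le 1$... no: we need $L^b T^{-b}$ bounded, i.e. $L\leq T^{-1}$, giving $T^{-b}\|P_{\leq T^{-1}}(g\psi_T)\|_{L^2}\leq \sum_{L\leq T^{-1}}T^{-b}\|P_L(g\psi_T)\|_{L^2}$, which needs $\sum_{L\le T^{-1}}\|P_L(g\psi_T)\|_{L^2}\ls T^{-b}\cdot(\dots)$; so the genuinely substantive estimate is the bound $\sum_{1\leq L\leq T^{-1}}\|P_L(g\psi_T)\|_{L^2}\ls \|g\psi_T\|_{B^b_{2,1}}$ up to the right-hand side, which again reduces to the smoothness-at-scale-$T^{-1}$ property.

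The main obstacle, and the step I expect to require the most care, is exactly this logarithmic-loss issue when $b$ is small: one must exploit that convolution with $\widehat{\psi_T}$ makes low-frequency dyadic blocks of $g\psi_T$ essentially identical (up to rapidly decaying tails, using that $\psi\in C_0^\infty$ so $\widehat\psi$ is Schwartz), so that $\|P_L(g\psi_T)\|_{L^2}\sim\|P_{\leq T^{-1}}(g\psi_T)\|_{L^2}$ for each $L\lesssim T^{-1}$ is false, but the summed version $\sum_{L\leq T^{-1}}\|P_L(g\psi_T)\|_{L^2}\ls T^{-b}\|P_{\leq T^{-1}}(g\psi_T)\|_{L^2}+\sum_{L>T^{-1}}L^b\|P_L(g\psi_T)\|_{L^2}$ holds because the low blocks are comparable to each other and the number of them contributes $\sum_{L\le T^{-1}} 1$ which one trades against a negative power using the $T^{-b}$ normalization — i.e. $\sum_{L\leq T^{-1}}\|P_L h\|_{L^2}\ls T^{-b}\sum_{L\le T^{-1}}(L T)^b\|P_L h\|_{L^2}\cdot(LT)^{-b}$ and $\sum_{L\le T^{-1}}(LT)^{-b}\ls 1$ since $b>0$; that is the one place $b>0$ is used, and $b\le\frac12$ is used nowhere except to make the reverse inequality $\|g\psi_T\|_{B^b_{2,1}}\ls(\text{RHS})$ cheap via $\psi_T$ being a smooth time localization (standard Bourgain-space cutoff estimate). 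I would finish by citing the routine fact that multiplication by $\psi_T$ with $0<b\le\frac12$ is bounded on the relevant modulation spaces, which handles the high-frequency part $\sum_{L>T^{-1}}$ directly, and assembling the two displayed inequalities.
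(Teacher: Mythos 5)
Your reduction to the low-frequency block is right, but both of the substantive inequalities are handled incorrectly. For the direction $\|g\psi_T\|_{B^b_{2,1}}\ls\mathrm{RHS}$ you give away the weight $L^b$ at the outset (bounding it by $T^{-b}$) and are then forced to claim $\sum_{1\leq L\leq T^{-1}}\|P_L(g\psi_T)\|_{L^2}\ls\|P_{\leq T^{-1}}(g\psi_T)\|_{L^2}$ via a ``smoothness at scale $T^{-1}$'' property of $g\psi_T$. That property is neither proved nor needed, and your bookkeeping collapses at the end: $\sum_{1\leq L\leq T^{-1}}(LT)^{-b}\sim T^{-b}$, not $\ls 1$ (the summand is largest at $L=1$, not at $L=T^{-1}$). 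The paper's argument is a one-line Cauchy--Schwarz that keeps the weight,
\[
\sum_{1\leq L\leq T^{-1}}L^b\|P_L(g\psi_T)\|_{L^2}\leq\Bigl(\sum_{1\leq L\leq T^{-1}}L^{2b}\Bigr)^{\frac12}\Bigl(\sum_{1\leq L\leq T^{-1}}\|P_L(g\psi_T)\|_{L^2}^2\Bigr)^{\frac12}\ls T^{-b}\|P_{\leq T^{-1}}(g\psi_T)\|_{L^2},
\]
where the first factor is a geometric sum dominated by its top term (the only place $b>0$ is used) and the second is controlled by orthogonality of the $P_L$; no structure of $g\psi_T$ enters at all.

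For the converse direction the genuinely nontrivial point is $T^{-b}\|P_{\leq T^{-1}}(g\psi_T)\|_{L^2}\ls\|g\psi_T\|_{B^b_{2,1}}$. Your ``cleanest route'' for this fails exactly where you notice it fails (the factor $(LT)^{-b}\geq 1$ for $L\leq T^{-1}$ goes the wrong way), and no valid replacement is supplied; invoking boundedness of multiplication by $\psi_T$ on Bourgain spaces does not help, since both sides already contain $g\psi_T$. The correct mechanism is different: drop the projection, $\|P_{\leq T^{-1}}(g\psi_T)\|_{L^2}\leq\|g\psi_T\|_{L^2}$, use H\"older on the support of $\psi_T$ (an interval of length $\ls T$) to get $T^{-b}\|g\psi_T\|_{L^2}\ls\|g\psi_T\|_{L^{2/(1-2b)}}$, and conclude with the embedding $B^b_{2,1}\subset L^{2/(1-2b)}$. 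This is where the time localization and the hypothesis $b\leq\frac12$ are actually used (you attribute $b\leq\frac12$ to the other direction). Without this step the proposal does not prove the lemma.
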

\begin{proof}
On the one hand we have
  \begin{eqnarray*}
    \sum_{1\leq L \leq T^{-1}} L^b \|P_L(g\psi_T)\|_{L^2}&\leq &
    2  \left(\sum_{1\leq L\leq T^{-1}} L^{2b}\right)^{\frac12} \|P_{\leq T^{-1}}(g\psi_T)\|_{L^2}\\
   &\ls&  T^{-b} \|P_{\leq T^{-1}}(g\psi_T)\|_{L^2},
 \end{eqnarray*}
 and on the other hand
\[
T^{-b} \|P_{\leq T^{-1}}(g\psi_T)\|_{L^2}\leq  T^{-b} \|g\psi_T\|_{L^2}
\leq \|g \psi_T\|_{L^{\frac{2}{1-2b}}}
\ls \|g \psi_T\|_{B^b_{2,1}}
\]
where we have used the embedding $B^{b}_{2,1}\subset L^{\frac{2}{1-2b}}$ in the last step.
\end{proof}
In the following, let
$X_{s,b,p}(T)$ denote either $X^S_{s,b,p}(T)$ or
$X^{W\pm}_{s,b,p}(T)$.
\begin{proposition}\label{prop:emb}
  Let $s,b \in \R$, $0< b <\frac12$. There
  exists a constant $C>0$ such that for all $T \in (0,1]$ the estimate
  \begin{equation}
    \|f\|_{X_{s,b,1}(T)}\leq C T^{\frac12-b} \|f\|_{X_{s,\frac12,1}(T)}\label{eq:emb_bp}
  \end{equation}
  holds for all $f \in X_{s,\frac12,1}(T)$.  Moreover, the embedding  $X_{s,\frac12,1}(T) \subset C([0,T];H^s)$ is continuous, i.e. there
  exists a constant $C>0$ such that for all $T \in (0,1]$ it holds
  \begin{equation}\label{eq:cont_emb}
    \sup_{0\leq t \leq T}\|f(t)\|_{H^s}\leq C \|f\|_{X_{s,\frac12,1}(T)}
  \end{equation}
  for all $f \in X_{s,\frac12,1}(T)$. 
\end{proposition}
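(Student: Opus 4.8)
The plan is to prove the continuous embedding \eref{eq:cont_emb} first and then to deduce the gain \eref{eq:emb_bp} from it. In both parts one reduces at once from the restriction space to the full space: given $f\in X_{s,\frac12,1}(T)$, fix an extension $\tilde f\in X_{s,\frac12,1}$ with $\|\tilde f\|_{X_{s,\frac12,1}}\le 2\|f\|_{X_{s,\frac12,1}(T)}$ and argue with $\tilde f$. It also helps to transfer to free solutions: if $U(t)$ denotes the propagator attached to the relevant phase ($e^{it\Delta}$ in the Schr\"odinger case, the half-wave propagator $e^{\mp it|\nabla|}$ in the wave case), then the substitution $\tau\mapsto\tau\mp|\xi|^2$ (resp. $\tau\mapsto\tau\mp|\xi|$) in the space-time Fourier transform gives $\|S_LP_Nu\|_{L^2}=\|P_L^{\,t}P_N(U(-\cdot)u)\|_{L^2}$, where $P_L^{\,t}$ is a Littlewood--Paley projection in the time variable alone. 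Since $U(t)$ is an $L^2_x$-isometry, the Schr\"odinger and wave cases are then handled identically.

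For \eref{eq:cont_emb} I would start from the Bernstein-type bound $\sup_t\|S_LP_N\tilde f(t)\|_{L^2_x}\ls L^{\frac12}\|S_LP_N\tilde f\|_{L^2}$, obtained by Cauchy--Schwarz in $\tau$ over the set $\{|\tau|\sim L\}$, which has measure $\ls L$; each block $S_LP_N\tilde f$ has compact space-time Fourier support, hence lies in $C(\R;H^s)$. Summing in $L$ --- here the $\ell^1$-summation in the modulation and the value $b=\frac12$ are used essentially --- shows that $P_N\tilde f=\sum_LS_LP_N\tilde f$ converges in $C(\R;H^s)$ with $\sup_t\|P_N\tilde f(t)\|_{H^s}\ls N^s\sum_LL^{\frac12}\|S_LP_N\tilde f\|_{L^2}$, the right-hand side being exactly the $X_{s,\frac12,1}$-ingredient at frequency $N$. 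Squaring, inserting $N^{2s}$ and summing in $N$ gives
\[
\Big(\sum_NN^{2s}\big(\sup_{t\in\R}\|P_N\tilde f(t)\|_{L^2_x}\big)^2\Big)^{\frac12}\ls\|\tilde f\|_{X_{s,\frac12,1}}.
\]
With $\sup_t\sum_N\le\sum_N\sup_t$ and the finite overlap of the $P_N$, the left-hand side dominates $\sup_t\|\tilde f(t)\|_{H^s}$; the same finite overlap and the decay of this $\ell^2_N$-tail show that $\tilde f=\sum_NP_N\tilde f$ converges in $C(\R;H^s)$, so $\tilde f$ has a representative in $C(\R;H^s)$. Restricting to $[0,T]$ and optimizing over $\tilde f$ yields \eref{eq:cont_emb}.

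For \eref{eq:emb_bp} the idea is to take $\psi_T\tilde f$ as the extension of $f$; it still agrees with $f$ on $(0,T)$ because $\psi_T\equiv1$ on $[-T,T]$, so it is enough to show $\|\psi_T\tilde f\|_{X_{s,b,1}}\ls T^{\frac12-b}\|\tilde f\|_{X_{s,\frac12,1}}$. I would split the modulation sum at $L=T^{-1}$. For $L\le T^{-1}$, Cauchy--Schwarz in $L$ bounds $\sum_{L\le T^{-1}}L^b\|S_LP_N(\psi_T\tilde f)\|_{L^2}$ by $\ls T^{-b}\|\psi_TP_N\tilde f\|_{L^2}\le T^{-b}T^{\frac12}\sup_t\|P_N\tilde f(t)\|_{L^2_x}$; squaring, weighting by $N^{2s}$, summing in $N$ and invoking the displayed inequality of the previous step converts this into $\ls T^{\frac12-b}\|\tilde f\|_{X_{s,\frac12,1}}$. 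For $L>T^{-1}$, I would expand $P_N(\psi_T\tilde f)=\sum_{L'}\psi_T\,S_{L'}P_N\tilde f$ and use that $\widehat{\psi_T}$ is Schwartz, has $L^1$-norm $\ls1$, and is concentrated in $\{|\tau|\ls T^{-1}\}$: for $L\not\sim L'$ this gives, for every $k$, $\|S_L(\psi_T\,S_{L'}P_N\tilde f)\|_{L^2}\ls(LL')^{\frac12}\,T\,(T\max\{L,L'\})^{-k}\|S_{L'}P_N\tilde f\|_{L^2}$, while for $L\sim L'$ the crude bound $\|S_L(\psi_T\,S_{L'}P_N\tilde f)\|_{L^2}\le\|S_{L'}P_N\tilde f\|_{L^2}$ is enough. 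Summing this kernel against $L^b$ over $L>T^{-1}$ --- on the diagonal using $L^b=L^{\frac12}L^{b-\frac12}\le L^{\frac12}T^{\frac12-b}$, off the diagonal using the geometric decay --- produces $\sum_{L>T^{-1}}L^b\|S_LP_N(\psi_T\tilde f)\|_{L^2}\ls T^{\frac12-b}\sum_{L'}(L')^{\frac12}\|S_{L'}P_N\tilde f\|_{L^2}$, and the usual weighting and $\ell^2_N$-summation once more give $\ls T^{\frac12-b}\|\tilde f\|_{X_{s,\frac12,1}}$. Combining the two ranges and optimizing over $\tilde f$ finishes the proof.

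The step I expect to be the main obstacle is the off-diagonal estimate for $S_L(\psi_T\,\cdot)$ in the high-modulation regime: one must keep careful track of the measure factors $(L')^{\frac12}$ and $L^{\frac12}$ (coming from $\|\psi_{L'}\widehat h\|_{L^1_\tau}\ls(L')^{\frac12}\|\psi_{L'}\widehat h\|_{L^2_\tau}$ and from the support of $\psi_L$) so that, after the dyadic sums in $L$ and $L'$ are carried out, precisely the power $T^{\frac12-b}$ survives with no negative power of $T$. Everything else is routine Bourgain-space bookkeeping, done with values in $L^2_x$ via Minkowski's inequality; alternatively, the high-modulation estimate can be organized through a vector-valued version of the norm equivalence in Lemma \ref{lem:norm_equiv}.
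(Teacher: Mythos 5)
Your proposal is correct, and both halves land on the right estimates, but the route differs from the paper's in the high-modulation regime. For \eref{eq:cont_emb} you essentially reprove the embedding $B^{\frac12}_{2,1}\subset C\cap L^\infty$ block by block (Cauchy--Schwarz in $\tau$ giving the $L^{\frac12}$ factor, then $\ell^1$-summation in $L$); the paper simply invokes that embedding after conjugating with the group, so this part is the same argument written out. For \eref{eq:emb_bp} the paper also multiplies by $\psi_T$ and splits at $L=T^{-1}$ (that split is exactly the content of Lemma \ref{lem:norm_equiv}), but then it rescales $t\mapsto Tt$ to reduce to the case $T=1$ and closes the estimate with the one-dimensional Sobolev multiplication bound $\|g_T\psi\|_{\dot H^{1/2}}\ls\|g_T\|_{\dot H^{1/2}}+\|g_T\|_{L^\infty}$. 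You instead keep $T$ fixed and control the high modulations by an explicit almost-orthogonality computation for multiplication by $\psi_T$, using the rapid decay of $\widehat{\psi_T}$ away from $\{|\tau|\ls T^{-1}\}$; your off-diagonal kernel bound $(LL')^{\frac12}\,T\,(T\max\{L,L'\})^{-k}$ is correct, and the dyadic sums do return exactly $T^{\frac12-b}$ (on the diagonal via $L^{b-\frac12}\leq T^{\frac12-b}$ for $L>T^{-1}$, off the diagonal via the geometric decay in $T\max\{L,L'\}$). What the paper's rescaling buys is brevity — a single product estimate replaces the kernel bookkeeping — while your version is more self-contained (no appeal to a Sobolev multiplication theorem) and makes transparent exactly where the hypothesis $b<\frac12$ and the $\ell^1$ modulation summation enter. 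One small point worth making explicit if you write this up: multiplication by $\psi_T(t)$ commutes with the conjugation by the free group and only translates the $\tau$-variable, so the $S_L$ (resp. $W^\pm_L$) projections really do behave like pure time-frequency projections in your almost-orthogonality argument; you indicate this at the outset and it is the reason the Schr\"odinger and wave cases are identical.
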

\begin{proof}
  We show \eref{eq:emb_bp} first. By the definition of the restriction norm it suffices to prove
  \[
  \|f\psi_T\|_{X_{s,b,1}}\leq C T^{\frac12-b} \|f\|_{X_{s,\frac12,1}}
  \]
  for all $f \in \mathcal{S}(\R^n \times \R)$.
  After conjugating $f$ with the linear group the claim is reduced to the estimate
  \[
     \|g\psi_T \|_{B^b_{2,1}}\leq C T^{\frac12-b} \|g\|_{B^{\frac12}_{2,1}}
  \]
  for $g \in \mathcal{S}(\R)$.
Then, with $g_T(t)=g(Tt)$ we use \eref{eq:norm_equiv} and obtain
\begin{eqnarray*}
\|g \psi_T\|_{B^b_{2,1}}&\ls& T^{-b} \|P_{\leq T^{-1}}(g \psi_T)\|_{L^2}+\sum_{L >T^{-1}} L^{b}\|P_L (g\psi_T)\|_{L^2}\\&\ls&
T^{\frac12-b} \sum_{L \geq 1} L^{b}\|P_L (g_T\psi)\|_{L^2}\\
&\ls& T^{\frac12-b}(\|g_T \psi\|_{L^2}+\|g_T \psi\|_{\dot{H}^\frac12})
\end{eqnarray*}
by rescaling. Obviously,
\[
\|g_T \psi\|_{L^2}\leq \|g_T\|_{L^\infty}\ls \|g\|_{B^\frac12_{2,1}}
\]
and by the 1d Sobolev Multiplication Theorem
\[
\|g_T \psi\|_{\dot{H}^{\frac12}}\ls \|g_T\|_{\dot{H}^{\frac12}}+\|g_T\|_{L^\infty}\ls \|g\|_{B^{\frac12}_{2,1}}.
\]

The second claim, including formula \eref{eq:cont_emb}, follows from the continuous embedding
$B^{\frac12}_{2,1}\subset C(\R;\R)$.
\end{proof}

For $f \in \mathcal{S}(\R^2\times \R)$ and $t\in \R$ let
\begin{eqnarray}
  \mathcal{I}^S(f)(t)&:=\int_0^te^{i(t-s)\Delta}f(s)ds,\label{eq:duhamel_schr}\\
  \mathcal{I}^{W+}(f)(t)&:=\int_0^te^{-i(t-s)\langle \nabla\rangle}f(s)ds.\label{eq:duhamel_wave}
\end{eqnarray}

The following Proposition corresponds to \cite[Lemma 2.1]{ginibre_cauchy_1997}.
\begin{proposition} \label{lest} Let $s \in \R$. There exists $C>0$ such that for all $0<T\leq 1$ and $\phi \in H^s$ the estimates
  \begin{eqnarray}
    \|e^{it\Delta}\phi \|_{X^S_{s,\frac12,1}(T)} &\leq  C  \|\phi\|_{H^s} \label{eq:hom_est_s},\\
    \|e^{-it\langle \nabla\rangle}\phi \|_{X^{W+}_{s,\frac12,1}(T)} &\leq C  \|\phi\|_{H^s},\label{eq:hom_est_w}
  \end{eqnarray}
 are true, and moreover the estimates
  \begin{eqnarray}
    \| \mathcal{I}^S(f)\|_{X^S_{s,\frac12,1}(T)} &\leq  C  T^{\frac{1}{12}} \| f  \|_{X^S_{s,-\frac5{12},\infty}(T)} ,\label{eq:duhamel_schr_est}\\
    \| \mathcal{I}^{W+}(f)\|_{X^{W+}_{s,\frac12,1}(T)} & \leq C  T^{\frac{1}{12}} \| f
    \|_{X^{W+}_{s,-\frac5{12},\infty}(T)}, \label{eq:duhamel_wave_est}
  \end{eqnarray}
  are true for all sufficiently smooth $f$. Therefore, $\mathcal{I}^S$ and $\mathcal{I}^{W+}$ can be
  extended to continuous linear operators on these spaces, which
  satisfy the same bounds.
\end{proposition}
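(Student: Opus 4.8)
The four estimates fall into two homogeneous bounds, \eref{eq:hom_est_s}--\eref{eq:hom_est_w}, and two inhomogeneous (Duhamel) bounds, \eref{eq:duhamel_schr_est}--\eref{eq:duhamel_wave_est}; in each pair the wave statement is proved exactly like the Schr\"odinger one, so I describe only the Schr\"odinger case and indicate the differences for the wave case. Throughout I use that $\psi_T(t)=\psi(t/T)$ equals $1$ on $[0,T]$, so that multiplying any distribution on $\R^{2+1}$ by $\psi_T$ produces an admissible competitor for the corresponding restriction norm on $(0,T)$.

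For \eref{eq:hom_est_s} I would pass to the extension $\psi_T\,e^{it\Delta}\phi$, whose space-time Fourier transform $\widehat{\psi_T}(\tau+|\xi|^2)\widehat\phi(\xi)$ factorizes in each fixed-$\xi$ slice. This gives $\|S_LP_N(\psi_T e^{it\Delta}\phi)\|_{L^2}=\|\psi_L\widehat{\psi_T}\|_{L^2(\R)}\|P_N\phi\|_{L^2}$, hence
\[
\|\psi_T e^{it\Delta}\phi\|_{X^S_{s,1/2,1}}=\left(\sum_{L\ge1}L^{1/2}\|\psi_L\widehat{\psi_T}\|_{L^2}\right)\|\phi\|_{H^s},
\]
and it remains to see that the bracketed sum is $\ls1$ uniformly for $0<T\le1$. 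That is a short scaling computation using $\widehat{\psi_T}(\sigma)=T\widehat\psi(T\sigma)$: the shells $L\le T^{-1}$ contribute $\sum_{L\le T^{-1}}TL\sim1$, and the shells $L>T^{-1}$ a rapidly summable tail of total size $\sim1$. For \eref{eq:hom_est_w} the only change is that the transform of $\psi_T e^{-it\langle\nabla\rangle}\phi$ carries the weight $\tau+\langle\xi\rangle$ while $W^\pm_L$ localizes $\tau+|\xi|$; since $\langle\xi\rangle-|\xi|\in(0,1]$, this merely shifts the modulation window by a bounded amount, which is harmless in the dyadic sum.

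For the inhomogeneous estimates the plan is to run the standard Bourgain-space energy argument -- this is the $\ell^1/\ell^\infty$-Besov analogue of \cite[Lemma~2.1]{ginibre_cauchy_1997} -- while tracking the power of $T$, the target gain being $T^{1/12}=T^{1-\frac12-\frac5{12}}$. Choosing a near-optimal extension $\tilde f$ of $f$, one checks that $\psi_T\,\mathcal{I}^S(\tilde f)$ extends $\mathcal{I}^S(f)|_{(0,T)}$, so it suffices to prove $\|\psi_T\,\mathcal{I}^S(\tilde f)\|_{X^S_{s,1/2,1}}\ls T^{1/12}\|\tilde f\|_{X^S_{s,-5/12,\infty}}$. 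I would split $\tilde f=F_{\mathrm{hi}}+F_{\mathrm{lo}}$ into Schr\"odinger modulation $\gs T^{-1}$ and modulation $\ls T^{-1}$, and use the Fourier-side Duhamel identity $\int_0^tG(s)\,ds=c\int\frac{e^{it\tau'}-1}{i\tau'}\widehat G(\tau')\,d\tau'$ to decompose
\[
\psi_T\,\mathcal{I}^S(\tilde f)=\psi_T\,\mathcal{F}^{-1}\!\left[\frac{\widetilde F_{\mathrm{hi}}}{i(\tau+|\xi|^2)}\right]+\psi_T\, e^{it\Delta}\Phi_{\mathrm{hi}}+\psi_T(t)\sum_{k\ge1}c_k\,t^k\, e^{it\Delta}\Phi^{(k)}_{\mathrm{lo}},
\]
where $\widehat{\Phi_{\mathrm{hi}}}(\xi)=\int(i(\tau+|\xi|^2))^{-1}\widetilde F_{\mathrm{hi}}(\tau,\xi)\,d\tau$ and $\widehat{\Phi^{(k)}_{\mathrm{lo}}}(\xi)=\int(\tau+|\xi|^2)^{k-1}\widetilde F_{\mathrm{lo}}(\tau,\xi)\,d\tau$; the series converges because on $\supp\widetilde F_{\mathrm{lo}}$ one has $|\tau+|\xi|^2|\ls T^{-1}$ while $\psi_T$ confines $|t|\ls T$, so $\frac{e^{it\tau'}-1}{i\tau'}$ is there analytic in $t\tau'$ with $|t\tau'|\ls1$. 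The three terms are estimated separately. The first obeys $\|S_L(\cdot)\|_{L^2}\ls L^{-1}\|S_L\tilde f\|_{L^2}$ for $L\gs T^{-1}$ and vanishes for $L\ls T^{-1}$, so by Lemma \ref{lem:norm_equiv} its $X^S_{s,1/2,1}$ norm is $\ls T^{-1/2}\|\cdot\|_{L^2}+\sum_{L>T^{-1}}L^{-1/2}\|S_L\tilde f\|$, both pieces being $\ls T^{1/12}\|\tilde f\|_{X^S_{s,-5/12,\infty}}$ because $\sum_{L>T^{-1}}L^{-1/12}\sim T^{1/12}$ and $\sum_{L>T^{-1}}L^{-7/6}\sim T^{7/6}$. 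The second is bounded by \eref{eq:hom_est_s} together with the Cauchy--Schwarz estimate $\|\Phi_{\mathrm{hi}}\|_{H^s}\ls\sum_{L>T^{-1}}L^{-1/2}\|S_L\tilde f\|\ls T^{1/12}\|\tilde f\|_{X^S_{s,-5/12,\infty}}$. For the $k$-th piece of the third I would combine the factorization identity from the homogeneous step with the scaling bound $\|t^k\psi_T\|_{B^{1/2}_{2,1}(\R)}\ls k^2 2^k T^k$ (as $t^k\psi_T=T^k(r^k\psi)(t/T)$) and the Cauchy--Schwarz estimate $\|\Phi^{(k)}_{\mathrm{lo}}\|_{H^s}\ls\sum_{L\ls T^{-1}}L^{k-1/2}\|S_L\tilde f\|\ls T^{1/12-k}\|\tilde f\|_{X^S_{s,-5/12,\infty}}$, so that summing the convergent series in $k$ again yields $T^{1/12}\|\tilde f\|_{X^S_{s,-5/12,\infty}}$. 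Collecting the three contributions gives \eref{eq:duhamel_schr_est}, and \eref{eq:duhamel_wave_est} follows identically, the $\langle\xi\rangle$ versus $|\xi|$ discrepancy being absorbed as before. Since all estimates hold on the dense subspace of Schwartz functions, $\mathcal{I}^S$ and $\mathcal{I}^{W+}$ finally extend to continuous operators with the same bounds.

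The step I expect to be the main obstacle is the resonant contribution -- the neighbourhood of the characteristic surface $\tau+|\xi|^2=0$, where the Duhamel multiplier $(i(\tau+|\xi|^2))^{-1}$ is neither bounded nor integrable. It is exactly here that the $\ell^1$-Besov structure of $X^S_{s,1/2,1}$ is needed to reach the endpoint $b=\frac12$, and here that the output cutoff $\psi_T$ must be exploited -- through Lemma \ref{lem:norm_equiv} and the scaling identities above -- to convert an otherwise logarithmically divergent modulation sum into the gain $T^{1/12}$. Keeping this power uniform in $T\le1$, and simultaneously pairing it against the $\ell^\infty$ Besov index on the source side (forced by the form of the output in Theorem \ref{thm:tri}), is the part demanding the most care.
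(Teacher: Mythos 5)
Your argument is correct in substance, but it takes a genuinely different route from the paper for the inhomogeneous estimates. The paper's proof is a two-line reduction: after conjugating with the linear group the matter becomes the one-dimensional temporal estimate $\|\psi_T\mathcal{I}(g)\|_{B^{1/2}_{2,1}}\ls T^{1/12}\|g\|_{B^{-5/12}_{2,\infty}}$, which is then obtained by rescaling $t\mapsto Tt$ (using Lemma \ref{lem:norm_equiv} to control the scaling of the $B^{1/2}_{2,1}$ norm, giving a factor $T$), invoking the known $T=1$ estimate from Ginibre--Tsutsumi--Velo (their formula (2.24)), and paying $T^{-11/12}$ for the scaling of the $B^{-5/12}_{2,\infty}$ norm, so that $T\cdot T^{-11/12}=T^{1/12}$. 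You instead re-derive that $T=1$ lemma from scratch with the $T$-dependence tracked throughout: the splitting of $\tilde f$ at modulation $T^{-1}$, the division by the symbol off the characteristic surface, the free-evolution correction term, and the Taylor expansion of $(e^{it\tau'}-1)/(i\tau')$ on the low-modulation piece. This buys a self-contained proof at the cost of length; the paper's rescaling argument is shorter but leans on the external reference and on Lemma \ref{lem:norm_equiv} in exactly the place where you lean on it too (the near-resonant region, which you correctly identify as the crux). Your treatment of the homogeneous estimates via the factorization $\widehat{\psi_T}(\tau+|\xi|^2)\widehat\phi(\xi)$ is precisely the computation the paper delegates to GTV (2.19) with the Besov modification.

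Two small points to tidy up. First, in the bound for the low-modulation output of your first term the exponent should be $\sum_{L>T^{-1}}L^{-7/12}\sim T^{7/12}$ (from $L^{-1}\cdot L^{5/12}$), not $L^{-7/6}$; the product with $T^{-1/2}$ then gives exactly $T^{1/12}$, so the conclusion is unaffected but the displayed arithmetic is off. Second, when you apply $S_L$ to $\psi_T\,\mathcal{F}^{-1}\bigl[\widetilde F_{\mathrm{hi}}/(i(\tau+|\xi|^2))\bigr]$ you should not claim exact modulation localization of the product: multiplication by $\psi_T$ smears modulations by $O(T^{-1})$, and one must either run Lemma \ref{lem:norm_equiv} on the product as a whole (as you ultimately do) together with the rapid decay of $\widehat{\psi_T}$ to control the high-modulation tail, or perform the standard paraproduct splitting; as written, the phrase ``vanishes for $L\ls T^{-1}$'' is not literally true of the product.
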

\begin{proof}
  We use the notation as in Proposition \ref{prop:emb} above.

  First, \eref{eq:hom_est_s} and \eref{eq:hom_est_w} are proved as in \cite[equation (2.19)]{ginibre_cauchy_1997}
  upon replacing the Sobolev space $H^{\frac12}_t$ by the Besov space $B^{\frac12}_{2,1}$.

  Second, by choosing appropriate extensions and conjugating with the linear group,
  the estimates \eref{eq:duhamel_schr_est}
  and \eref{eq:duhamel_wave_est}
  easily reduce to the estimate
  \[
  \|\psi_T \mathcal{I}(g)\|_{B^{\frac12}_{2,1}}\leq C T^{\frac1{12}}\|g\|_{B^{-\frac{5}{12}}_{2,\infty}}
  \]
  for all $g \in \mathcal{S}(\R)$, $T \in (0,1]$, where $\mathcal{I}(g)=\int_0^t g(t')dt'$. With $g_T(t)=g(Tt)$ we calculate
  \[
  (\psi_TI(g))(Tt)=T \psi(t) I(g_T)(t).
  \]
  Now, \eref{eq:norm_equiv} and rescaling yields
  \[
  \|\psi_T \mathcal{I}(g)\|_{B^{\frac12}_{2,1}}\leq C T \|\psi I(g_T)\|_{B^{\frac12}_{2,1}}.
  \]
  From estimate \cite[formula (2.24)]{ginibre_cauchy_1997} with $T=1$ and trivial embeddings we deduce
  \[
  \|\psi I(g_T)\|_{B^{\frac12}_{2,1}} \leq C \|g_T\|_{B^{-\frac{5}{12}}_{2,\infty}}.
  \]
  Finally, rescaling shows that
  \[
  \|g_T\|_{B^{-\frac{5}{12}}_{2,\infty}}\leq C T^{-\frac{11}{12}}\|g\|_{B^{-\frac{5}{12}}_{2,\infty}}
  \]
  for all $0<T\leq 1$, which concludes the proof.
\end{proof}

\begin{definition}\label{def:sol}
  We call $(u,v)\in X^{S}_{s,\frac12,1}(T)\times
  X^{W+}_{s',\frac12,1}(T)$ a solution of \eref{eq:zs-red} with
  initial data $(u_0,v_0)\in H^{s}\times H^{s'}$, if it solves
  \begin{equation}\label{eq:sol} \left(\begin{array}{c}u(t)\\v(t)\end{array}\right)=\left(\begin{array}{c}e^{it\Delta}u_0\\e^{-it\langle\nabla\rangle}v_0\end{array}\right)-i\left(\begin{array}{c}\mathcal{I}^{S}(2\Re(v)u)(t)\\\mathcal{I}^{W+}(-\frac{\Delta}{\langle\nabla\rangle}|u|^2-\frac{1}{\langle \nabla \rangle}\Re v)(t)
      \end{array}\right)
    \end{equation}
  for all $t \in [0,T]$.
\end{definition}

Now we are ready to proceed with the proof of our main result.

\begin{proof}[Proof of Theorem \ref{main}]
  Let $R=\| u_0 \|_{L^2}+\|v_0 \|_{H^{-\frac12}}$. Since the time 
  of existence claimed in Theorem \ref{main} is smaller than $1$
  it is enough  to discuss only the case $1 \ls R$.  

  The estimates \eref{eq:duhamel_schr_est} and \eref{eq:sws1}, \eref{eq:sws2}
  yield
  \begin{eqnarray*}
    \| \mathcal{I}^{S}(2\Re(v)u)\|_{X^S_{0,\frac12,1}(T)} 
    &\ls& T^\frac{1}{12} ( \| uv\|_{X^S_{0,-\frac{5}{12},\infty}(T)} + \| u \bar{v}\|_{X^S_{0,-\frac{5}{12},\infty}(T)}) \\
    &\ls& T^\frac{1}{12} \| u \|_{X^S_{0,\frac{5}{12},1}(T)} \|v\|_{X^{W+}_{-\frac12,\frac{5}{12},1}(T)},
  \end{eqnarray*}
  and \eref{eq:emb_bp} implies
  \begin{equation}\label{nonlschr}
    \| \mathcal{I}^{S}(2\Re(v)u)\|_{X^S_{0,\frac12,1}(T)} \ls T^\frac{1}{4} \| u \|_{X^S_{0,\frac12,1}(T)} \| v\|_{X^{W+}_{-\frac12,\frac12,1}(T)}.
  \end{equation}
  In a similar manner, using \eref{eq:wss}, we estimate
  \begin{eqnarray*}
    \left\|\mathcal{I}^{W+}(\frac{\Delta}{\langle\nabla\rangle}|u|^2)\right\|_{X^{W+}_{-\frac12,\frac12,1}(T)}
&\ls & T^{\frac{1}{12}}\left\|\frac{\Delta}{\langle\nabla\rangle}|u|^2\right\|_{X^{W+}_{-\frac{1}{2},-\frac{5}{12},\infty}(T)} \\
    &\ls& T^\frac{1}{12} \| u \|^2_{X^S_{0,\frac{5}{12},1}(T)}
  \end{eqnarray*}
  and obtain
  \begin{equation} \label{nonlwave1}
  \left\|\mathcal{I}^{W+}(\frac{\Delta}{\langle\nabla\rangle}|u|^2)\right\|_{X^{W+}_{-\frac12,\frac12,1}(T)}
    \ls T^\frac14 \|u_1\|_{X^S_{0,\frac12,1}(T)}
    \|u_2\|_{X^S_{0,\frac12,1}(T)}.
  \end{equation}
  Additionally we obtain
        \begin{equation} \label{nonlwave2}
        \eqalign{
    \|\mathcal{I}^{W+}(\langle \nabla \rangle^{-1} \Re v)\|_{X^{W+}_{-\frac12,\frac12,1}(T)}
    &\ls \|\langle \nabla \rangle^{-1} \Re v \|_{L^2([0,T] \times \R^2)} \\
    &\ls T^\frac12 \| v \|_{L^{\infty}_t H^{-\frac12}_x} \ls T^\frac12 \| v \|_{X_{-\frac12,\frac12,1}^{W+}},}
  \end{equation}
  which easily follows from \eref{eq:duhamel_wave_est}.
  The analoguos estimates for differences can be shown by the same arguments.
  Using these nonlinear estimates and the linear estimates in
  Proposition \ref{lest}, a standard iteration argument constructs a
  unique solution
  \[(u,v) \in B_{X^S_{0,\frac12,1}(T)}(0,C \|u_0\|_{L^2}) \times B_{X^{W+}_{-\frac12,\frac12,1}(T)}(0, C \|v_0\|_{H^{-\frac12}})\]
  for \eref{eq:sol}, provided that $T \sim R^{-4}$. In addition, one can show local Lipschitz continuity of the
  induced map $(u_0,v_0)\mapsto (u,v)$.
  
  Next we seek to boost the time of existence based on the technique
  described in \cite{colliander_low_2008}. This is possible due to the
  $L^2$ norm conservation for $u$ and to the fact that the
  nonlinearity for $v$ depends only on $u$. We claim that the time of
  existence can be improved to $T \sim \min\{R^{-2} \| u_0 \|_{L^2}^{-2},1\}$.

  Without restricting the generality of the argument we can assume
  that $\| v_0 \|_{H^{-\frac12}} \geq \| u_0 \|_{L^2}$. Then by the
  above argument we are able to construct solutions on the time interval
  $\delta \sim \| v_0 \|_{H^{-\frac12}}^{-4}$.

  On the other hand, using \eref{eq:sol}, \eref{nonlwave1},  \eref{nonlwave2} and that
  $e^{-it\la \nabla \ra}$ is unitary we obtain
  \begin{eqnarray*}
  \fl  \| v \|_{L^\infty_t H_x^{-\frac12}([0,\delta] \times \R^2)} &
    \leq \| v_0 \|_{H^{-\frac12}(\R^2)} + C \delta^\frac14 \| u \|^2_{X^S_{0,\frac12,1}(\delta)} + \| v \|_{L^1_t H_x^{-\frac12}([0,\delta] \times \R^2)}\\
\fl    &\leq \| v_0 \|_{H^{-\frac12}(\R^2)} + C \delta^\frac14 \| u_0
    \|^2_{L^2} +  \delta \| v \|_{L^\infty_t H_x^{-\frac12}([0,\delta] \times \R^2)}.
  \end{eqnarray*}
  This allows us to keep reiterating the problem on intervals
  $[j\delta,(j+1)\delta]$ for $j=0,1,\ldots,m$ until we double the
  size of the wave data, i.e. up to the first time when
  $\|v(t_0)\|_{H^{-\frac12}}=2\|v_0\|_{H^{-\frac12}}$ (after this time
  the value of $\delta$ has to be adjusted). After $m$ iterations we obtain
  \[\fl
  \| v \|_{L^\infty_t H_x^{-\frac12}([0,m\delta] \times \R^2)} \leq \| v_0 \|_{H^{-\frac12}(\R^2)} + C m \delta^\frac14 \| u_0
    \|^2_{L^2} +  m \delta \| v \|_{L^\infty_t H_x^{-\frac12}([0,m\delta] \times \R^2)}.
  \]
  A direct computation
  gives $m \sim \min{(\| v_0 \|_{H^{-\frac12}}\delta^{-1/4} \| u_0
  \|^{-2}_{L^2},\delta^{-1})}$ and this improves the time of existence for solutions
  to
  \[
  m \delta \sim \min{(\frac{R}{C R^{-1} \| u_0 \|^2_{L^2}} R^{-4},1)} \sim \min{(R^{-2}\| u_0 \|^{-2}_{L^2},1)}.
  \]
  Therefore we are able to improve the life-span of
  solution to a time $T \sim \min{(R^{-2} \| u_0 \|^{-2}_{L^2},1)}$ which implies the claim
  in Theorem \ref{main}.

  Then a standard argument also establishes the uniqueness of
  solutions in $X^S_{0,\frac12,1}(T) \times X^{W+}_{-\frac12,\frac12,1}(T)$ and the Lipschitz dependence with
  respect to the initial data.
\end{proof}

\section{Counterexamples}\label{sect:counter}
\noindent
We first show that the time of existence provided in Theorem \ref{main} is
optimal up to the multiplicative constant.
\begin{proof}[Proof of Theorem \ref{thm:blowup}]
Fix $r>\|Q\|_{L^2}$. There exists $\omega\gg 1$ such that the Glangetas--Merle \cite{glangetas_existence_1994,glangetas_concentration_1994} solution $P_\omega$, see \eref{eq:blowup_sol}, satisfies $\|P_{\omega}\|_{L^2}<r$. We fix such $\omega\gg 1$ and calculate for the corresponding solution \eref{eq:blowup_sol}
\begin{equation*}
\|u(t)\|_{L^2}=\|P_\omega\|_{L^2}<r,
\end{equation*}
and
\begin{eqnarray*}
\|n(t)\|_{H^{-\frac12}}+\|\partial_t n(t)\|_{H^{-\frac32}}\sim |T-t|^{-\frac12}.
\end{eqnarray*}
Theorem \ref{thm:blowup} follows.
\end{proof}

Next, we show that our multilinear estimates in Theorem \ref{thm:tri}
are sharp.  We follow the approach which has been pioneered by Bourgain \cite{bourgain_periodic_1997} to show non-smoothness of the flow map. We also refer the reader to \cite{holmer_local_2007} where related counterexamples in the 1d case have been constructed.

In order to avoid unnecessary technicalities, we write
$X^S_{k,b}$ to denote $X^S_{k,b,2}$ and $X^{W\pm}_{\ell,b}$ to denote
$X^{W\pm}_{\ell,b,2}$ and provide counterexamples for this scale of
norms.  We remark that the arguments remain valid for any choice of
$1\leq p\leq \infty$ instead of $2$.  The reason is that the norms
$X^S_{k,b,p}$ for distinct $p$ are equivalent up to logarithms of the
size of the modulation (same for $X^{W\pm}_{k,b,p}$), but our
counterexamples will always involve powers of the modulation.

Moreover, in Proposition \ref{prop:c1} we restrict the exposition to
the case of the $X^{W+}_{k,b}$ space, i.e. the sharpness of
\eref{eq:sws2}; the case $X^{W+}_{k,b}$, i.e. the sharpness of
\eref{eq:sws1}, follows by the same argument up to obvious
modifications.
\begin{proposition}\label{prop:c1}
  The inequality
$$\|u v\|_{X^S_{k,-b'}} \lesssim \|v\|_{X^{W+}_{\ell,b_1}}\|u\|_{X^{S}_{k,b_2}}$$
is false in either of the following two situations:
\begin{enumerate}
\item if $\ell<-\frac12$, for any $b'$, $b_1$, and $b_2$,
\item if $\ell=-\frac12$ and $b'+b_1+b_2< \frac54$.
\end{enumerate}
\end{proposition}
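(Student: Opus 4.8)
The plan is to run the standard Bourgain-type argument: exhibit explicit test functions whose space--time Fourier transforms are characteristic functions of boxes adapted to the Schr\"odinger paraboloid $\{\tau=-|\xi|^2\}$ and to the cone $\{\tau=-|\xi|\}$, and then show that the ratio of the two sides of the claimed inequality is unbounded along a sequence of such functions. A single two-parameter family will handle both cases. Fix dyadic $N$ and $\Lambda$ with $1\le\Lambda\le N$ and set $\mathcal{F}v=\mathbf{1}_{B_v}$, $\mathcal{F}u=\mathbf{1}_{B_u}$, where $B_v$ consists of the $(\xi,\tau)$ with $\xi_1\in[N,N+\Lambda N^{-1}]$, $|\xi_2|\le\Lambda^{1/2}$ and $|\tau+|\xi||\sim\Lambda$, and $B_u$ consists of the $(\xi,\tau)$ with $\xi_1\in -\frac{N}{2}+c_0\Lambda N^{-1}+[0,\Lambda N^{-1}]$, $|\xi_2|\le\Lambda^{1/2}$ and $|\tau+|\xi|^2|\sim\Lambda$, with $c_0$ a suitable absolute constant. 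The radial width $\Lambda N^{-1}$ and transverse width $\Lambda^{1/2}$ are exactly the amount of slack compatible with a modulation of size $\Lambda$, and $c_0$ is chosen so that on $\mathcal{F}u\ast\mathcal{F}v$ the Schr\"odinger resonance function $2\xi_u\cdot\xi_v+|\xi_v|^2-|\xi_v|$ has size $\sim\Lambda$ throughout. One checks that $u$, $v$ and $uv$ are then each localized to a single dyadic frequency ($\sim N$ for $v$, $\sim N/2$ for $u$ and for $uv$) and a single dyadic modulation $\sim\Lambda$.

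The next step is pure bookkeeping. Since $u$, $v$, $uv$ each sit at a single frequency--modulation dyadic block, for every Besov exponent $p$ and every sign of the exponents one has $\|u\|_{X^S_{k,b_2,p}}\sim N^{k}\Lambda^{b_2}\|u\|_{L^2}$, $\|v\|_{X^{W+}_{\ell,b_1,p}}\sim N^{\ell}\Lambda^{b_1}\|v\|_{L^2}$ and $\|uv\|_{X^S_{k,-b',p}}\sim N^{k}\Lambda^{-b'}\|uv\|_{L^2}$, so the dependence on $p$ is vacuous here. A volume count gives $|B_u|\sim|B_v|\sim\Lambda^{5/2}N^{-1}$, hence $\|u\|_{L^2}\sim\|v\|_{L^2}\sim\Lambda^{5/4}N^{-1/2}$. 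The one genuine estimate is the lower bound on $\|uv\|_{L^2}=\|\mathbf{1}_{B_u}\ast\mathbf{1}_{B_v}\|_{L^2}$: this convolution is nonnegative, supported in $B_u+B_v$, and has integral $|B_u||B_v|$, so Cauchy--Schwarz yields $\|\mathbf{1}_{B_u}\ast\mathbf{1}_{B_v}\|_{L^2}\ge |B_u|\,|B_v|\,|B_u+B_v|^{-1/2}$; a second elementary volume count gives $|B_u+B_v|\lesssim\Lambda^{5/2}N^{-1}$, whence $\|uv\|_{L^2}\gtrsim\Lambda^{15/4}N^{-3/2}$. Assembling the pieces, the quotient of the left side over the right side of the claimed inequality is $\gtrsim N^{-\ell-\frac12}\,\Lambda^{\frac54-b'-b_1-b_2}$.

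Both conclusions follow at once. For (i), when $\ell<-\frac12$ fix $\Lambda=1$ and let $N\to\infty$: the quotient grows like $N^{-\ell-\frac12}$, uniformly in $b',b_1,b_2$ (precisely because all modulations were kept at scale $1$). For (ii), when $\ell=-\frac12$ and $b'+b_1+b_2<\frac54$, let $\Lambda\to\infty$ with $N$ at least as large (e.g.\ $N=\Lambda$): the quotient grows like $\Lambda^{\frac54-b'-b_1-b_2}$. The corresponding statement with $u\bar v$ in place of $uv$ (the sharpness of the other mixed estimate in Theorem \ref{thm:tri}) follows verbatim after replacing $B_v$ by $-B_v$.

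The crux, and the only place where real care is needed, is calibrating the box geometry so that $|B_u|$, $|B_v|$ and $|B_u+B_v|$ all come out equal to $\Lambda^{5/2}N^{-1}$ and so that $u$, $v$, $uv$ are genuinely localized to a single frequency--modulation block (which is what makes the norm identities hold for all $p$ and all signs of $b',b_1,b_2$). Getting the radial width, the transverse width, and the shift $c_0\Lambda N^{-1}$ right is exactly what produces the exponent $\frac54$ on $\Lambda$; if any of these volume counts were off by a power of $N$ or $\Lambda$ the threshold would change. Everything else is routine.
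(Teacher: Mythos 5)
Your overall strategy is exactly the paper's (your $(N,\Lambda)$ family is the paper's Lemma \ref{L:counter1} with $\Lambda=N^{1+\sigma}$, and your Cauchy--Schwarz lower bound $\|\mathbf{1}_{B_u}\ast\mathbf{1}_{B_v}\|_{L^2}\ge |B_u||B_v||B_u+B_v|^{-1/2}$ is a clean substitute for the paper's pointwise bound $\widehat{vu}\gtrsim N^{\frac32+\frac52\sigma}\chi_G$), and all of your volume bookkeeping is correct. But there is a genuine error in the box placement, at precisely the point you flag as the crux. With $\xi_v\approx(N,0)$ and $\xi_u\approx(-\frac N2+c_0\Lambda N^{-1},0)$, the resonance function is
\begin{equation*}
2\xi_u\cdot\xi_v+|\xi_v|^2-|\xi_v|=\xi_{v,1}\bigl(\xi_{v,1}+2\xi_{u,1}\bigr)+O(\Lambda)-|\xi_v|
=(2c_0+O(1))\Lambda-N+O(\Lambda),
\end{equation*}
since $\xi_{v,1}+2\xi_{u,1}=(2c_0+O(1))\Lambda N^{-1}$ and $|\xi_v|=N+O(\Lambda N^{-1})$. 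The term $-|\xi_v|\approx-N$ coming from the wave dispersion relation is \emph{not} cancelled: for $\Lambda\ll N$ no absolute constant $c_0$ in a shift of size $c_0\Lambda N^{-1}$ can produce the $O(1)$ correction (about $+\frac12$) needed to make $\xi_{v,1}(\xi_{v,1}+2\xi_{u,1})\approx N$. Hence $uv$ is localized at Schr\"odinger modulation $\sim N$, not $\sim\Lambda$, the factor $\Lambda^{-b'}$ in your quotient is really $N^{-b'}$, and the ratio is $N^{-\ell-\frac12-b'}\Lambda^{\frac54-b_1-b_2}$. Part (ii) survives (there you take $N=\Lambda$, so $O(N)=O(\Lambda)$ and the distinction disappears), but part (i) — which takes $\Lambda=1$, $N\to\infty$ and must defeat \emph{arbitrary} $b'$ — is lost: for $b'>-\ell-\frac12$ your quotient stays bounded.

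The fix is exactly the paper's choice of centers: take the wave frequency $2N+1$ and the Schr\"odinger input frequency $-N$, so that the output sits at $N+1$ and $|\xi_u+\xi_v|^2-|\xi_u|^2-|\xi_v|=(N+1)^2-N^2-(2N+1)=0$ identically at the centers; the box widths $\Lambda N^{-1}\times\Lambda^{1/2}\times\Lambda$ then perturb the resonance by only $O(\Lambda)$ and your computation goes through verbatim. Equivalently, in your normalization, replace the shift $c_0\Lambda N^{-1}$ by $\frac12+O(\Lambda N^{-1})$. With that single correction the rest of your argument (the norm identities on single dyadic blocks, the volume counts, and the two limits $N\to\infty$ with $\Lambda=1$ and $\Lambda=N\to\infty$) is sound and reproduces the paper's conclusion.
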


This follows from applying Lemma \ref{L:counter1} with $\sigma=-1$ to
establish the first claim, and any $\sigma$ such that $-1<\sigma<0$ to
establish the second claim.

\begin{proposition}\label{prop:c2}
  The inequality
$$\left\|\frac{\Delta}{\langle \nabla\rangle} (u \bar w)\right\|_{X^{W+}_{\ell,-b'}} \lesssim \|u\|_{X^S_{k,b_1}}\|w\|_{X^S_{k,b_2}}$$
is false in either of the following two situations:
\begin{enumerate}
\item if $\ell-2k+\frac12>0$ for any $b'$, $b_1$, and $b_2$,
\item if $\ell-2k+\frac12=0$ and $b'+b_1+b_2< \frac54$
\end{enumerate}
\end{proposition}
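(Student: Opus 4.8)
\emph{Strategy.} The plan is to disprove the estimate in Proposition \ref{prop:c2} by a Bourgain-type argument \cite{bourgain_periodic_1997}: for each large dyadic $N$ one exhibits functions supported near the characteristic surfaces for which the quotient of the two sides tends to infinity as $N\to\infty$. As with Proposition \ref{prop:c1}, I would package the relevant configuration into a scale of counterexamples depending on an auxiliary modulation scale $\sigma$ (in the spirit of Lemma \ref{L:counter1}), and then obtain the two parts of Proposition \ref{prop:c2} by specializing $\sigma$: the endpoint assertion comes from intermediate $\sigma$, while a degenerate value (all modulations bounded) yields the first, more robust assertion. The operative mechanism is the resonant \emph{high $\times$ high $\to$ high} interaction, dual to the hardest cases of the multilinear estimates, Propositions \ref{prop:trans_low_mod} and \ref{prop:paral_hh}.

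\emph{The endpoint construction.} Fix $N\gg1$ and choose $A,B\in\R^2$ with $|A|\sim|B|\sim N$, $|A-B|\sim N$ and $|A|^2-|B|^2=|A-B|$ (for instance $A=(\frac{N+1}{2},N)$, $B=(\frac{1-N}{2},N)$, so $A-B=(N,0)$). Let $u,w$ have space-time Fourier transforms equal to the indicators of the slabs $\{\,|\xi-A|\le cN\,\}\times I$ and $\{\,|\xi-B|\le cN\,\}\times I'$, with $c$ a small absolute constant and $I,I'$ intervals of length $\sim N^2$ centred at $-|A|^2$, $-|B|^2$. Then $u$ is frequency localized at $|\xi|\sim N$ and supported where $|\tau+|\xi|^2|\ls N^2$, and similarly for $w$. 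Because $A,B$ lie on the resonant set, $\mathcal F(u\bar w)=\mathcal F u * \mathcal F\bar w$ is supported at output frequency $\sim N$ and wave modulation $\ls N^2$, and a direct volume count (the two balls and the two modulation windows overlap efficiently over the region where $\tau+|\xi|$ is small) gives $|\mathcal F(u\bar w)|\gs N^4$ on a set of $(\xi,\tau)$ of measure $\sim N^4$, with most of this mass at wave modulation $\sim N^2$; hence $\|u\bar w\|_{L^2}\gs N^6$.

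\emph{Arithmetic.} Since $u$ lives at frequency $\sim N$ with modulations spread up to $\sim N^2$, one gets $\|u\|_{X^S_{k,b_1}}\sim N^{k+2+2b_1}$ and $\|w\|_{X^S_{k,b_2}}\sim N^{k+2+2b_2}$ (for $b_i>-\frac12$; otherwise the $X^S$-norms only shrink and the estimate fails the more easily), while, using that $\frac{\Delta}{\la \nabla \ra}$ has symbol $\sim N$ at frequency $\sim N$ and keeping the dominant wave-modulation-$\sim N^2$ piece,
\[
\left\|\frac{\Delta}{\la \nabla \ra}(u\bar w)\right\|_{X^{W+}_{\ell,-b'}} \gs N^{\ell}\cdot N\cdot (N^2)^{-b'}\cdot N^6 = N^{\ell+7-2b'}.
\]
Thus the quotient of the two sides is $\gs N^{\,\ell-2k+3-2(b'+b_1+b_2)}$, which $\to\infty$ precisely when $\ell-2k+3>2(b'+b_1+b_2)$; at $\ell-2k+\frac12=0$ this is $b'+b_1+b_2<\frac54$, proving part (ii). For part (i), where $\ell-2k+\frac12=\varepsilon>0$, this family already covers $b'+b_1+b_2<\frac54+\frac{\varepsilon}{2}$; the remaining large-exponent range is reached by the other members of the family --- taking all modulations bounded (so that $b',b_1,b_2$ drop out and only an $N^{\ell-2k}$-type quotient remains), together with an ``elliptic'' variant in which the input modulation $\Lambda\gg N^2$ is taken large (this forces the $X^S$-norms to be small when $b_1+b_2$ is large, while the $N$-power of the output is unaffected). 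Optimizing $\sigma$ (the modulation scale) over these variants disposes of an arbitrary triple $(b',b_1,b_2)$ once $\ell-2k+\frac12>0$.

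\emph{Main obstacle.} For the endpoint family the lower bound $\|u\bar w\|_{L^2}\gs N^6$ is an elementary volume count, since the product is allowed to smear out over wave modulations $\ls N^2$; the only subtlety is to place $A,B$ and the windows $I,I'$ so that $\operatorname{supp}\mathcal F(u\bar w)$ genuinely meets the region where $\tau+|\xi|$ is small. For the bounded-modulation members needed in part (i) the product must instead be studied close to the cone, and there one needs the lower-bound counterpart of the transversal-surfaces convolution estimate (Proposition \ref{cor:conv_theta}): one checks that the two paraboloid plates and the cone plate meet transversally with angle $\gs1$ (as in the proof of Proposition \ref{prop:trans_low_mod}), so that no power of $N$ is lost to oscillation. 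Everything else is bookkeeping with dyadic frequency/modulation weights, together with the routine optimization over $\sigma$ in part (i).
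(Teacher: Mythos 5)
Your part (ii) argument is essentially the paper's: Fourier-side indicator functions placed at a resonant high$\times$high$\to$high configuration (centres satisfying $|A|^2-|B|^2=|A-B|$ so that the output sits on the cone $\tau=-|\xi|$), followed by a pure volume count. The paper's Lemma \ref{L:counter2} uses thin anisotropic rectangles of widths $N^{\sigma}\times N^{\frac12(1+\sigma)}\times N^{1+\sigma}$ with $-1\le\sigma<0$ (so all modulations are $\ls N^{1+\sigma}\le N$), whereas you use isotropic balls of radius $cN$ with modulation window $N^2$; this is the degenerate ``fattest'' member of the same family (formally $\sigma=1$), and your arithmetic does reproduce the threshold $b'+b_1+b_2<\frac54$. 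So part (ii) is a legitimate variant, provided you verify the asserted distribution of mass over dyadic modulations for inputs and output (e.g.\ that a set of measure $\sim N^4$ on which the convolution is $\gs N^4$ really lies at wave modulation comparable to $N^2$ -- at the very top scale the $\tau$-overlap degenerates, so one should work at $L\sim c'N^2$).

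Part (i), however, is not actually proved, and your stated ``main obstacle'' reveals a misconception. The bounded-modulation example you allude to is indeed all that is needed (it kills the dependence on $b',b_1,b_2$ entirely and leaves a quotient $\gs N^{\ell-2k+\frac12}$), but to make it work you must take \emph{anisotropic} rectangles -- radial width $N^{-1}$, transverse width $1$, $\tau$-width $1$, exactly the $\sigma=-1$ case of Lemma \ref{L:counter2} -- since an $O(1)$ radial width at frequency $N$ already smears $|\xi|^2$ over an interval of length $N$ and destroys the $O(1)$ Schr\"odinger modulation. You do not specify this, and your ``fat ball'' template does not adapt. More importantly, there is no need for ``the lower-bound counterpart of the transversal-surfaces convolution estimate'' (Proposition \ref{cor:conv_theta}) nor any check that ``no power of $N$ is lost to oscillation'': the construction lives entirely on the Fourier side, so $\widehat{u\bar w}=\hat u*\widehat{\bar w}$ is pointwise nonnegative and the lower bound $\widehat{u\bar w}\gs |E|$ on a translate of $E$ is elementary measure theory; the only geometric input is the algebraic identity $(N+1)^2-N^2=|(2N+1,0)|$ placing the output rectangle on the cone. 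The auxiliary ``elliptic variant'' with input modulation $\Lambda\gg N^2$ is likewise unnecessary. As written, part (i) rests on an unexecuted construction plus an appeal to machinery that is neither available as a lower bound nor relevant, so this part has a genuine gap.
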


This follows from applying Lemma \ref{L:counter2} with $\sigma=-1$ to
establish the first claim, and any $\sigma$ such that $-1<\sigma<0$ to
establish the second claim.

\begin{lemma}
  \label{L:counter1}
  For each $N\gg 1$, there exist $v_N$ and $u_N$ such that
$$\frac{\|v_N u_N \|_{X^S_{k,-b'}}}{\|v_N\|_{X^{W+}_{\ell,b_1}} \|u_N\|_{X^S_{k,b_2}}} \gtrsim N^{-\ell-\frac12 + (1+\sigma)[ \frac54-(b'+b_1+b_2)]}$$
for all $k,\ell \in \mathbb{R}$ and $ b', b_1, b_2 \geq 0$, and any
$-1\leq \sigma <0$, with the implicit constant independent of all of
$k,\ell, b', b_1, b_2, \sigma$, and $N$.
\end{lemma}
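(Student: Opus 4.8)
The plan is to exhibit such $u_N$ and $v_N$ via a Bourgain-type counterexample built from explicit frequency- and modulation-localized functions. Fix $N\gg 1$ and set $\Lambda:=N^{1+\sigma}$, so that $1\le \Lambda\le N$ since $-1\le\sigma<0$. I would take $u_N$ and $v_N$ to be (smoothed) characteristic functions of boxes $R_u,R_v\subset\R^2\times\R$ adapted respectively to the Schr\"odinger paraboloid $\tau=-|\xi|^2$ and the wave cone $\tau=-|\xi|$, both localized at frequency $\sim N$ and at modulation $\sim\Lambda$, i.e.\ $\supp\widehat{u_N}=R_u\subset\mathfrak P_N\cap\mathfrak S_\Lambda$ and $\supp\widehat{v_N}=R_v\subset\mathfrak P_N\cap\mathfrak W^+_\Lambda$. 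The centres of $R_u,R_v$ are placed in the \emph{resonant configuration} in which $\angle(\xi_1,\xi_2)$ is chosen so that the Schr\"odinger modulation of the product,
\[
\tau+|\xi|^2=2\xi_1\cdot\xi_2+|\xi_2|^2-|\xi_2|+O(\Lambda)\qquad(\tau_1=-|\xi_1|^2+O(\Lambda),\ \tau_2=-|\xi_2|+O(\Lambda)),
\]
is of size $\ls\Lambda$ on $R_u+R_v$. The side-lengths of $R_u$ and $R_v$ in the radial, angular and temporal directions of the two surfaces are then chosen as suitable powers of $N$ and $\Lambda$ (depending on $\sigma$) so that, in addition, $u_Nv_N$ is genuinely concentrated at modulation $\sim\Lambda$, while $|R_u|$ and $|R_v|$ are kept as large as these constraints allow.

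Granting such a choice, the rest is a mechanical computation. For a function whose Fourier transform is the characteristic function of a box localized at a single dyadic frequency $\sim M$ and single dyadic modulation $\sim L\ge 1$ one has $\|\cdot\|_{X^S_{s,b,2}}\sim M^sL^b|R|^{1/2}$ (and likewise for $X^{W+}_{s,b,2}$); in particular $\|v_N\|_{X^{W+}_{\ell,b_1}}\sim N^\ell\Lambda^{b_1}|R_v|^{1/2}$, $\|u_N\|_{X^S_{k,b_2}}\sim N^k\Lambda^{b_2}|R_u|^{1/2}$, and $\|u_Nv_N\|_{X^S_{k,-b'}}\gs N^k\Lambda^{-b'}\|u_Nv_N\|_{L^2}$ because $u_Nv_N$ lives at frequency $\sim N$ and modulation $\sim\Lambda$. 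Finally $\|u_Nv_N\|_{L^2}=\|\widehat{u_N}\ast\widehat{v_N}\|_{L^2}$ is bounded below by observing that $\widehat{u_N}\ast\widehat{v_N}(\zeta)=|R_u\cap(\zeta-R_v)|\gs\min\{|R_u|,|R_v|\}$ on a set of $\zeta$ of measure $\gs|R_u+R_v|$. Substituting the chosen dimensions and $\Lambda=N^{1+\sigma}$, the $N^k$ in the numerator cancels the one from $\|u_N\|_{X^S_{k,b_2}}$ (so the bound is $k$-independent), the $N^\ell$ from $\|v_N\|_{X^{W+}_{\ell,b_1}}$ produces the factor $N^{-\ell}$, and collecting the remaining powers of $N$ yields exactly $N^{-\ell-\frac12+(1+\sigma)[\frac54-(b'+b_1+b_2)]}$. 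The hypotheses $b',b_1,b_2\ge 0$ enter only to guarantee that the dyadic sums over modulation are dominated by the single scale $\sim\Lambda$, which legitimizes the lower bounds above.

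The hard part is the geometric step. The tension is between making $|R_u|$ and $|R_v|$ as large as possible --- this is what forces $\|u_Nv_N\|_{L^2}$ to be large relative to $\|u_N\|_{L^2}\|v_N\|_{L^2}$ --- and two sets of constraints: first, that $R_u$ and $R_v$ genuinely lie within modulation $\Lambda$ of their hypersurfaces, which caps the angular side-lengths through the curvature of the cone and paraboloid; and second, that the product remains at modulation $\sim\Lambda$, which caps the side-lengths through the gradients $\partial_{\xi_1}(2\xi_1\cdot\xi_2+|\xi_2|^2-|\xi_2|)=2\xi_2$ and $\partial_{\xi_2}(\cdots)=2(\xi_1+\xi_2)-\xi_2/|\xi_2|$ of the resonance function. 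Exploiting that the cone is flat in the radial direction (so that $R_v$ may be taken long there) is what ultimately produces the exponent $\tfrac54$; checking that the resulting arithmetic reproduces the claimed power of $N$ uniformly in $k,\ell,b',b_1,b_2$ and $\sigma$ is the only delicate point, the remainder being bookkeeping with characteristic functions of boxes.
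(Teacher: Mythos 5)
You have the right family of counterexamples, and it is the same one the paper uses: take $\widehat{u_N},\widehat{v_N}$ to be characteristic functions of boxes at frequency $\sim N$ and modulation $\sim N^{1+\sigma}$ adapted to the paraboloid and the cone, place them in a resonant configuration, and run the norm computation. The problem is that the entire content of the lemma lies in the step you explicitly defer: exhibiting the boxes and checking that their side-lengths are simultaneously compatible with the two input-modulation constraints and with the output staying at modulation $\lesssim N^{1+\sigma}$, and that the resulting volumes produce the exponent $\frac54(1+\sigma)$. As written, you assert that ``suitable powers of $N$ and $\Lambda$'' exist without producing them, so the proof is incomplete exactly where it is nontrivial.

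Moreover, the one concrete mechanism you offer for where the $\frac54$ comes from is not the right one. The paper centres the wave box at $(2N+1,0,-2N-1)$ on the cone and the Schr\"odinger box at $(-N,0,-N^2)$ on the paraboloid (a collinear configuration, so the output sits at $(N+1,0,-(N+1)^2)$, exactly on the paraboloid), and gives \emph{both} boxes the dimensions $N^{\sigma}\times N^{\frac12(1+\sigma)}\times N^{1+\sigma}$. The wave box cannot be ``taken long in the radial direction where the cone is flat'': by your own formula the resonance function $2\xi_1\cdot\xi_2+|\xi_2|^2-|\xi_2|$ has $\xi_2$-gradient $2(\xi_1+\xi_2)-\xi_2/|\xi_2|$, which in the resonant configuration points along the common axis and has size $\sim N$; hence the requirement that the product stay at Schr\"odinger modulation $\lesssim N^{1+\sigma}$ caps the radial width of the wave box at $N^{\sigma}$, the same as for the Schr\"odinger box. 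The actual gain is in the direction perpendicular to the common axis, where both the paraboloid and the resonance function are quadratically degenerate, allowing width $N^{\frac12(1+\sigma)}$ instead of $N^{\sigma}$. Each box then has volume $N^{\frac32+\frac52\sigma}=N^{-1}\cdot N^{\frac52(1+\sigma)}$, and the square root of this volume surviving in the norm ratio is precisely the source of $\frac54=\frac12\bigl(1+\frac12+1\bigr)$. So the ``only delicate point'' you identify is both unresolved and guided by an inaccurate heuristic.
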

\begin{proof}
  Denote $\xi=(\xi_1,\xi_2)$ (i.e. $\xi_j$ now denotes the $j$th
  component of $\xi$).  Let $\hat v = \chi_E$, where $E$ is the
  rectangle centered at $(\xi_1,\xi_2,\tau)=(2N+1,0,-2N-1)$ and width
  $N^{\sigma}\times N^{\frac12(1+\sigma)}\times N^{1+\sigma}$, so that
  on $E$, we have $|\tau+|\xi|| \lesssim N^{1+\sigma}$.  Let $\hat u
  =\chi_F$, where $F$ is the rectangle centered at
  $(\xi_1,\xi_2,\tau)= (-N,0,-N^2)$ and width $N^{\sigma}\times
  N^{\frac12(1+\sigma)} \times N^{1+\sigma}$, so that on $F$, we have
  $|\tau+|\xi|^2|\lesssim N^{1+\sigma}$.  Then $\widehat{vu} \gs
  N^{\frac32+\frac52\sigma} \chi_G$, where $G$ is a rectangle centered
  at $(\xi_1,\xi_2,\tau) = (N+1,0,-(N+1)^2)$ and width
  $N^{\sigma}\times N^{\frac12(1+\sigma)} \times N^{1+\sigma}$. Note
  that on $G$, we have $|\tau+|\xi|^2| \lesssim N^{1+\sigma}$.
  Then
  \begin{eqnarray*}
    \|vu\|_{X^S_{k,-b'}} &\gtrsim N^{\frac32+\frac52\sigma} N^kN^{-(1+\sigma)b'} \|\chi_G\|_{L^2} \\
    &= N^{\frac32+\frac52\sigma}
    N^kN^{-(1+\sigma)b'}N^{\frac12(\frac32+\frac52\sigma)}\; ,
  \end{eqnarray*}
  and
$$\|v\|_{X^{W+}_{\ell,b_1}} \lesssim N^\ell N^{(1+\sigma)b_1}\|\chi_E\|_{L^2} = N^\ell N^{(1+\sigma)b_1}N^{\frac12(\frac32+\frac52\sigma)} \,,$$
$$\|u\|_{X^S_{k,b_2}} \lesssim N^k N^{(1+\sigma)b_2} \|\chi_F\|_{L^2} = N^k N^{(1+\sigma)b_2} N^{\frac12(\frac32+\frac52\sigma)} \,,$$
which proves the claim.
\end{proof}

\begin{lemma}
  \label{L:counter2}
  For each $N\gg 1$, there exists $u_N$ and $w_N$ such that
$$\frac{\|\frac{\Delta}{\langle \nabla\rangle} (u_N \bar w_N)\|_{X^{W+}_{\ell,-b'}}}{\|u_N\|_{X^S_{k,b_1}} \|w_N\|_{X^S_{k,b_2}}} \gtrsim N^{\ell-2k+\frac12+(1+\sigma)[\frac54-(b'+b_1+b_2)]}$$
for all $k,\ell \in \mathbb{R}$, any $b', b_1, b_2\geq 0$, and any
$-1\leq \sigma <0$, with the implicit constant independent of all of
$k,\ell, b', b_1, b_2, \sigma$, and $N$.
\end{lemma}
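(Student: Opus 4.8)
The plan is to dualise the construction from the proof of Lemma~\ref{L:counter1}: there the wave bump is an input and the output is measured in a Schr\"odinger norm, whereas here the roles are reversed and the two inputs are Schr\"odinger bumps whose ``difference frequency'' is forced to sit on the wave cone at height $\sim N$.

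First I would fix the bumps. Let $\widehat{u_N}=\chi_F$, where $F$ is the box centered at $(\xi_1,\xi_2,\tau)=(N,0,-N^2)$, and $\widehat{w_N}=\chi_{F'}$, where $F'$ is the box centered at $(1-N,0,-(1-N)^2)$; both have side lengths $N^{\sigma}\times N^{\frac12(1+\sigma)}\times N^{1+\sigma}$. By the same direct computation as in the proof of Lemma~\ref{L:counter1} one checks $F,F'\subset\mathfrak P_N$ and $|\tau+|\xi|^2|\ls N^{1+\sigma}$ on each box, which yields $\|u_N\|_{X^S_{k,b_1}}\ls N^{k}N^{(1+\sigma)b_1}|F|^{1/2}$ and $\|w_N\|_{X^S_{k,b_2}}\ls N^{k}N^{(1+\sigma)b_2}|F|^{1/2}$, with $|F|^{1/2}=N^{\frac12(\frac32+\frac52\sigma)}$.

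Next I would analyse the output. Because $\widehat{w_N}=\chi_{F'}$ is real, $\widehat{\bar w_N}(\zeta)=\widehat{w_N}(-\zeta)=\chi_{-F'}(\zeta)$, and $-F'$ is the box centered at $(N-1,0,(N-1)^2)$, which lies near the conjugate paraboloid $\tau=|\xi|^2$. Hence $\widehat{u_N\bar w_N}=\chi_F\ast\chi_{-F'}$ is nonnegative and $\gs|F|$ on the box $G$ of side lengths $N^{\sigma}\times N^{\frac12(1+\sigma)}\times N^{1+\sigma}$ centered at $(2N-1,0,1-2N)$. The centers are chosen so that this center lies exactly on the cone: $\tau+|\xi|=(1-2N)+(2N-1)=0$. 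A short Taylor expansion over $G$ --- using $|\xi_2|^2\ls N^{1+\sigma}$, that the terms linear in the perturbation contribute $\ls N\cdot N^{\sigma}=N^{1+\sigma}$, and that $|\xi|\sim N$ on $G$ --- then gives $|\tau+|\xi||\ls N^{1+\sigma}$ on all of $G$, while the multiplier of $\Delta\langle\nabla\rangle^{-1}$ has size $\sim N$ there. Consequently $\mathcal F\left(\Delta\langle\nabla\rangle^{-1}(u_N\bar w_N)\right)\gs N^{\frac52+\frac52\sigma}\chi_G$, and since $G\subset\mathfrak P_N$ only meets the shells $\mathfrak W^+_L$ with $L\ls N^{1+\sigma}$,
\[
\left\|\frac{\Delta}{\langle\nabla\rangle}(u_N\bar w_N)\right\|_{X^{W+}_{\ell,-b'}}\gs N^{\ell}\,N^{-(1+\sigma)b'}\,N^{\frac52+\frac52\sigma}\,|G|^{1/2},\qquad |G|^{1/2}=N^{\frac12(\frac32+\frac52\sigma)}.
\]

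Finally I would divide and collect exponents: the numerator contributes $\ell-(1+\sigma)b'+\frac52+\frac52\sigma+\frac12(\frac32+\frac52\sigma)$ and the product of the two denominators $2k+(1+\sigma)(b_1+b_2)+(\frac32+\frac52\sigma)$, and their difference is exactly $\ell-2k+\frac12+(1+\sigma)\left[\frac54-(b'+b_1+b_2)\right]$, as claimed. All implicit constants come only from the convolution lower bound, the $\sim N$ size of the multiplier on $G$, and the bounded overlap of dyadic shells, so they are absolute; the one point needing a remark is that replacing the modulation weights by $N^{1+\sigma}$ costs at most a factor $2^{b'+b_1+b_2}$, which is harmless since the $b$'s enter the target exponent only in the regime $b'+b_1+b_2<\frac54$ (and for $\sigma=-1$ all modulations are $\ls1$). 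I expect the main obstacle to be the middle step: one must choose the center $\xi_1$-coordinates $a,d$ of $F,F'$ so that $a-d\sim N$ (placing the output at wave frequency $\sim N$) while $a+d=1+O(N^{\sigma})$ (placing it on the cone), e.g.\ $a=N$ and $d=1-N$, and then verify carefully that the quadratic corrections in the expansion of $\tau+|\xi|$ over $G$ remain $\ls N^{1+\sigma}$; this is exactly what forces the relative side lengths $N^{\sigma}$, $N^{\frac12(1+\sigma)}$, $N^{1+\sigma}$, and it is the only place where genuine computation is unavoidable.
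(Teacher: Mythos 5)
Your construction is correct and is essentially the same as the paper's: two unit-mass boxes adapted to the paraboloid at frequencies $\approx N$ and $\approx -(N-1)$ (the paper uses $N+1$ and $-N$), arranged so that the convolution lands on the cone $\tau+|\xi|=0$ at frequency $\approx 2N$, followed by the identical exponent count. The only differences are the harmless shift of the box centers and the order in which the $N^{\ell}$, multiplier, and modulation factors are collected.
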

\begin{proof}
  Let $\hat u_N = \chi_E$, where $E$ is the rectangle centered at
  $(\xi_1,\xi_2,\tau)=(N+1,0,-(N+1)^2)$ with width $N^{\sigma}\times
  N^{\frac12(1+\sigma)}\times N^{1+\sigma}$, so that on $E$, the
  quantity $|\tau+|\xi|^2 |\leq N^{1+\sigma}$.  Let $\hat w_N=\chi_F$,
  where $F$ is the rectangle centered at
  $(\xi_1,\xi_2,\tau)=(-N,0,-N^2)$ with width $N^{\sigma}\times
  N^{\frac12(1+\sigma)}\times N^{1+\sigma}$, so that on $F$, the
  quantity $|\tau+|\xi|^2| \leq N^{1+\sigma}$.  Then $\widehat{u_N \bar
    w_N} \gs N^{\frac32+\frac52\sigma}\chi_G$, where $G$ is the
  rectangle centered at $(2N+1,0,-2N-1)$ and width $N^{\sigma}\times
  N^{\frac12(1+\sigma)}\times N^{1+\sigma}$ so that on $G$, the
  quantity $|\tau+|\xi|| \leq N^{1+\sigma}$.  Thus,
  \begin{eqnarray*}
    \left\| \frac{\Delta}{\langle \nabla\rangle} ( u_N \bar w_N) \right\|_{X^{W+}_{\ell,-b'}}
    &\gtrsim N^{\frac32+\frac52\sigma} N^{\ell+1} N^{-(1+\sigma)b'}\|\chi_G\|_{L^2} \\
    &= N^{\frac32+\frac52\sigma} N^{\ell+1} N^{-(1+\sigma)b'}
    N^{\frac12(\frac32+\frac52\sigma)}\; ,
  \end{eqnarray*}
  and
$$\|u_N\|_{X^S_{k,b_1}} \lesssim N^k N^{(1+\sigma)b_1} \|\chi_E\|_{L^2} = N^k N^{(1+\sigma)b_1}N^{\frac12(\frac32+\frac52\sigma)} \,,$$
$$\|w_N\|_{X^S_{k,b_2}} \lesssim N^k N^{(1+\sigma)b_2} \|\chi_F\|_{L^2} =  N^k N^{(1+\sigma)b_2}N^{\frac12(\frac32+\frac52\sigma)} \,,$$
which proves the claim.
\end{proof}

\begin{remark} Alternatively, the optimality of our choice of $b_1=b_2=b_3=\frac{5}{12}$ can be seen by an indirect argument: If it was possible to choose smaller $b$'s, we would be able to improve the time of existence by the iterative argument given in Section \ref{sect:proof} above and would obtain a contradiction to the blow-up of the Glangetas--Merle solutions constructed in \cite{glangetas_concentration_1994,glangetas_existence_1994}.
\end{remark}

The following proposition is based on a variant of the example from the
proof of Proposition \ref{prop:c1} and contains a slightly stronger conclusion.
\begin{proposition}\label{prop:unbounded_der1} Fix $0<T\leq 1$. For all $N\gg T^{-1}$
there exists $u_N\in H^k_x$ and $v_N\in H^{\ell}_x$ such that
\begin{equation*}
\fl \sup_{|t|\leq T}\left\|\int_0^te^{i(t-t')\Delta}\left(e^{it'\Delta}u_N \Re\left(e^{-it'\langle \nabla\rangle}v_N\right)\right)dt'\right\|_{H^{k}_x}\gs \frac{\|u_N\|_{H^k_x}\|v_N\|_{H^\ell_x}}{N^{\ell+\frac12}}\; ,
\end{equation*}
where the constant is independent of $N$.
\end{proposition}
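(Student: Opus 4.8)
The plan is to reuse the resonant interaction behind Proposition \ref{prop:c1} --- concretely the extremal functions of Lemma \ref{L:counter1} at the endpoint $\sigma=-1$ --- now in the physical time variable, and to exploit that the associated resonance function stays $O(1)$ on the whole interval $[0,T]$, so that the Duhamel integral gains a full factor $\sim T$ with no temporal cancellation. Write $D_N(t)=\int_0^t e^{i(t-t')\Delta}\big(e^{it'\Delta}u_N\,\Re(e^{-it'\la\nabla\ra}v_N)\big)\,dt'$ for the expression whose $H^k_x$-norm appears in the statement. First I would fix a small absolute constant $c_0>0$ and choose $\widehat{u_N}=\chi_F$, where $F\subset\R^2$ is the box of dimensions $c_0N^{-1}\times c_0$ centered at $(-N,0)$, and $\widehat{v_N}=\chi_E$, where $E$ has the same dimensions and is centered at $(2N+1,0)$; these are the spatial footprints of the $\sigma=-1$ extremizers from the proof of Lemma \ref{L:counter1}. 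For them one reads off $\|u_N\|_{H^k_x}\sim N^{k-\frac12}$ and $\|v_N\|_{H^\ell_x}\sim N^{\ell-\frac12}$.

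Next I would split $\Re(e^{-it'\la\nabla\ra}v_N)=\tfrac12(w+\bar w)$ with $w=e^{-it'\la\nabla\ra}v_N$ and compute $\mathcal{F}_x D_N(t)$. The $\bar w$-part produces output frequencies near $(-(3N+1),0)$, disjoint from the support of the $w$-part, which is contained in the box $G:=E+F$ of dimensions $\sim N^{-1}\times1$ centered at $(N+1,0)$; since the $\bar w$-part only increases the $H^k_x$-norm it may be discarded for a lower bound. A direct computation then shows that for $\xi\in G$
\[
\mathcal{F}_x D_N(t)(\xi)=\tfrac12\, e^{-it|\xi|^2}\int_0^t\int_{\R^2} e^{it'h(\xi,\eta)}\,\chi_F(\xi-\eta)\,\chi_E(\eta)\,d\eta\,dt',
\]
where $h(\xi,\eta):=|\xi|^2-|\xi-\eta|^2-\la\eta\ra$ is the associated resonance function.

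The crucial --- and essentially the only delicate --- step is to verify that $h$ is $O(1)$ on these supports. At the centers $\xi=(N+1,0)$, $\eta=(2N+1,0)$ one gets $h=(N+1)^2-N^2-\la2N+1\ra=O(N^{-1})$; since $\partial_\xi h=2\eta$ has size $\sim N$, and $\partial_\eta h=2(\xi-\eta)-\eta\la\eta\ra^{-1}$ has size $\ls N$ in the $\xi_1,\eta_1$ directions and $O(1)$ in the $\xi_2,\eta_2$ directions, the oscillation of $h$ over $G\times E$ is $\ls N\cdot(c_0N^{-1})+c_0\ls c_0$. Choosing $c_0$ small (and using that $N$ is large) therefore gives $|h|\le\tfrac1{10}$, hence $|t'h|\le\tfrac1{10}$ for all $|t'|\le T\le1$. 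This is exactly the mechanism of Lemma \ref{L:counter1}: the quadratic interaction places its output within $O(1)$ of the Schr\"odinger characteristic surface and keeps it there for the whole time interval, so the $t'$-integral does not oscillate.

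Finally, evaluating at $t=T$ and taking real parts, $\Re e^{it'h}\ge\cos\tfrac1{10}>0$, so $|\mathcal{F}_x D_N(T)(\xi)|\gs T\,(\chi_E*\chi_F)(\xi)\gs T\,|F|\sim T\,N^{-1}$ on the inner half $G'$ of $G$, a set of measure $\sim N^{-1}$ on which $|\xi|\sim N$. Hence
\[
\sup_{|t|\le T}\|D_N(t)\|_{H^k_x}\ge\|D_N(T)\|_{H^k_x}\gs N^{k}\cdot TN^{-1}\cdot(N^{-1})^{1/2}\sim T\,N^{k-\frac32},
\]
and dividing by $\|u_N\|_{H^k_x}\|v_N\|_{H^\ell_x}\sim N^{k+\ell-1}$ yields the lower bound $\gs T\,N^{-\ell-\frac12}$. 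Since $T$ is fixed, this is $\gs N^{-\ell-\frac12}$ with an implicit constant independent of $N$, which is the assertion. The main obstacle is the resonance-function bound of the third paragraph; the rest is the same bookkeeping as in Lemma \ref{L:counter1}.
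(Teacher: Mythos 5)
Your proof is correct and follows essentially the same construction as the paper: the same frequency boxes centered at $(-N,0)$ and $(2N+1,0)$ producing output near $(N+1,0)$, the same $O(1)$ resonance function $|\xi|^2-|\xi-\eta|^2-\langle\eta\rangle$, and the same bookkeeping of norms. The only (harmless) deviation is that you dispose of the conjugate half of $\Re(e^{-it'\langle\nabla\rangle}v_N)$ by Fourier-support disjointness rather than by its $O(N)$ non-resonant phase, which is why you do not need the restriction $|t|\gg N^{-1}$ that the paper uses.
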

\begin{proof}
Set $\hat u_N:=\chi_{A}$, where $A$ is the rectangle
where $\xi=(\xi_1,\xi_2)$ satisfies
$$ -N-N^{-1}\leq \xi_{1}\leq -N+N^{-1} \;\mbox{ and }\; -1\leq \xi_2\leq 1,$$
such that $\|u_N\|_{H^k}\sim N^{k-\frac12}$. 
Similarly, define $v_N:=\chi_{B}+\chi_{-B}$ for the rectangle $B$ where
$$ 2N+1-2N^{-1}\leq \xi_1 \leq 2N+1+2N^{-1}\; \mbox{ and }\; -2\leq \xi_2\leq 2.$$
Note that $v_N$ is real-valued and $\|v_N\|_{H^\ell_x}\sim N^{\ell-\frac12}$.
We observe that
\begin{equation}\label{eq:lb_uv}
\widehat{u_N v_N}(\xi)\gs N^{-1},
\end{equation}
whenever $\xi=(\xi_1,\xi_2)$ satisfies
\begin{equation}\label{eq:reg_xi}
N+1-N^{-1} \leq \xi_1 \leq N+1+N^{-1} \;\mbox{ and }\; -1\leq \xi_2\leq 1.
\end{equation}
We write $2\Re (e^{-it'\langle \nabla \rangle}v_N)=(e^{-it'\langle \nabla \rangle}+e^{it'\langle \nabla \rangle})v_N$.
For $\xi$ satisfying \eref{eq:reg_xi} and $N^{-1}\ll |t|\ll T$ it holds
\begin{eqnarray*}
\fl &\left|\mathcal{F}_x\left(\int_0^te^{i(t-t')\Delta}\left(e^{it'\Delta}u_N (e^{-it'\langle \nabla \rangle}+e^{it'\langle \nabla \rangle})v_N\right)dt'\right)(\xi)\right|\\
\fl =&\left|\int \int_0^t e^{it'(|\xi|^2-|\eta|^2)}(e^{-it'\langle \xi-\eta\rangle}+e^{it'\langle \xi-\eta \rangle}) dt'
\widehat{u_N}(\eta)\widehat{v_N}(\xi-\eta)d\eta\right|
\gs |t|N^{-1}
\end{eqnarray*}
by \eref{eq:lb_uv} and because the first phase factor $|\xi|^2-|\eta|^2-\langle \xi-\eta\rangle $ is bounded whenever $\eta\in A$ and \eref{eq:reg_xi} holds for $\xi$, whereas the second phase factor $|\xi|^2-|\eta|^2+\langle \xi-\eta\rangle$ is of size $N$ in this region.

Integrating over this region \eref{eq:reg_xi} gives
\begin{eqnarray*}
\left\|\int_0^te^{i(t-t')\Delta}\left(e^{it'\Delta}u_N \Re\left(e^{-it'\langle \nabla \rangle} v_N\right)\right)dt'\right\|_{H^{k}_x}
\gs |t| N^{k-\frac32},
\end{eqnarray*}
which implies the claim.
\end{proof}

The following proposition is based on a variant of the example from the proof of Proposition \ref{prop:c2}.
\begin{proposition}\label{prop:unbounded_der2} Fix $0<T\leq 1$.
For all $N\gg 1$ there exists $u_N\in H^k_x$ such that
\begin{equation*}
\fl \sup_{|t|\leq T}\left\|\int_0^te^{-i(t-t')\langle \nabla \rangle}\frac{\Delta}{\langle \nabla\rangle}\left(e^{it'\Delta}u_N \overline{e^{it'\Delta}u_N}\right)dt'\right\|_{H^{\ell}_x}\gs N^{\ell-2k+\frac12}\|u_N\|_{H^k_x}^2\; ,
\end{equation*}
where the constant is independent of $N$.
\end{proposition}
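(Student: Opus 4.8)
Following the strategy of Lemma \ref{L:counter2} (taken with $\sigma=-1$) and of Proposition \ref{prop:unbounded_der1}, the plan is to exploit a Wave--Schr\"odinger resonance produced by a single Schr\"odinger datum whose Fourier support consists of two well-separated frequency boxes, so that the resonant piece of $|u|^2$ lives at frequency $\sim N$ (where the multiplier $\frac{\Delta}{\langle\nabla\rangle}$ contributes a full factor of $N$). Fix a small absolute constant $\varepsilon_0>0$. For $N\gg1$ set $\hat u_N:=\chi_{A_1}+\chi_{A_2}$, where $A_1$ is the rectangle in the $\xi=(\xi_1,\xi_2)$ plane centered at $(N+1,0)$ with side lengths $\varepsilon_0 N^{-1}\times\varepsilon_0$, and $A_2$ is the rectangle centered at $(-N,0)$ with the same side lengths. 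Since $\hat u_N$ is a real characteristic function of a bounded set, $u_N\in H^k_x$ for every $k$, and $\|u_N\|_{H^k_x}^2\sim N^{2k}|A_1\cup A_2|\sim\varepsilon_0^2 N^{2k-1}$.

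Using $\widehat{e^{it'\Delta}u_N}(\eta)=e^{-it'|\eta|^2}\hat u_N(\eta)$ and the reality of $\hat u_N$, I would compute the Fourier transform of the Duhamel term as
\[
\frac{-|\xi|^2}{\langle\xi\rangle}\,e^{-it\langle\xi\rangle}\int\hat u_N(\eta)\,\hat u_N(\eta-\xi)\left(\int_0^t e^{it'\Phi(\xi,\eta)}\,dt'\right)d\eta,\qquad \Phi(\xi,\eta):=\langle\xi\rangle-|\eta|^2+|\xi-\eta|^2.
\]
Then I restrict to the output box $G$, a concentric sub-rectangle of $A_1-A_2$ centered at $(2N+1,0)$ of dimensions $\sim\varepsilon_0 N^{-1}\times\varepsilon_0$. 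For $\xi\in G$ only the product $\eta\in A_1$, $\eta-\xi\in A_2$ is nonzero (the other three products in $\hat u_N*\overline{\hat u_N(-\cdot)}$ are frequency-localized near $0$ and near $-(2N+1,0)$), and the $\eta$-set $E_\xi=A_1\cap(\xi+A_2)$ has measure $\gtrsim\varepsilon_0^2 N^{-1}$.

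The crucial observation is the resonance: at the centers $\Phi((2N+1,0),(N+1,0))=\sqrt{1+(2N+1)^2}-(2N+1)=O(N^{-1})$, while the algebraic identity $|\xi-\eta|^2-|\eta|^2=|\xi|^2-2\xi\cdot\eta$ makes $\Phi$ essentially affine on the boxes, so that $|\Phi(\xi,\eta)|\lesssim\varepsilon_0+O(N^{-1})\leq1$ on $G\times E_\xi$ once $\varepsilon_0$ is small and $N$ large. Hence, choosing $t=T\leq1$ and using $\cos(t'\Phi)\geq\cos1>0$, for $\xi\in G$ one gets $\Re\int_{E_\xi}\hat u_N(\eta)\hat u_N(\eta-\xi)\bigl(\int_0^T e^{it'\Phi}\,dt'\bigr)d\eta\gtrsim T|E_\xi|$, and since $|\xi|^2/\langle\xi\rangle\sim N$ on $G$ the Duhamel term satisfies $|\mathcal{F}_x(\cdots)(\xi)|\gtrsim N\,T\,|E_\xi|\gtrsim T\varepsilon_0^2$. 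Integrating $\langle\xi\rangle^{2\ell}\sim N^{2\ell}$ over $G$ (of measure $\sim\varepsilon_0^2 N^{-1}$) gives $\sup_{|t|\leq T}\|\cdots\|_{H^\ell_x}^2\gtrsim T^2\varepsilon_0^6 N^{2\ell-1}$, and comparing with $N^{\ell-2k+\frac12}\|u_N\|_{H^k_x}^2\sim\varepsilon_0^2 N^{\ell-\frac12}$ yields the claimed bound with a constant $\sim T\varepsilon_0$ independent of $N$. The main obstacle is the bookkeeping in the previous step: one must choose the box dimensions so that simultaneously (i) $|\Phi|\leq1$ on all of $G\times E_\xi$ --- which forces the $\xi_1$-side lengths to be $\sim N^{-1}$, since $\partial_{\xi_1}\Phi$ and $\partial_{\eta_1}\Phi$ are of size $\sim N$ --- while (ii) $|G|\sim N^{-1}$ and $|E_\xi|\sim N^{-1}$ still hold ($G$ must be slightly smaller than $A_1$ so the intersection does not degenerate), and (iii) the off-resonant products genuinely vanish near $(2N+1,0)$; once these are arranged everything else is a routine adaptation of Lemma \ref{L:counter2} and Proposition \ref{prop:unbounded_der1}.
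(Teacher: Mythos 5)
Your construction is correct and is essentially the paper's own argument: a two-box datum at frequencies $\approx(N+1,0)$ and $\approx(-N,0)$ with $\xi_1$-width $\sim N^{-1}$, the resonance $\langle\xi\rangle-|\eta|^2+|\xi-\eta|^2=O(1)$ on the output box near $(2N+1,0)$, and the factor $|\xi|^2/\langle\xi\rangle\sim N$ from the multiplier, giving $|t|N^{\ell-\frac12}$ versus $N^{\ell-2k+\frac12}\|u_N\|_{H^k}^2\sim N^{\ell-\frac12}$. The only cosmetic difference is that you shrink the boxes by a fixed $\varepsilon_0$ to force $|\Phi|\leq1$ and evaluate at $t=T$, whereas the paper keeps $O(1)$ transverse widths and instead restricts to $|t|\ll1$; both devices control the time integral equally well.
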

\begin{proof}
Set $\hat u_N:=\chi_{D_1}+\chi_{D_2}$, where $D_1$ is the rectangle
where $\xi=(\xi_1,\xi_2)$ satisfies
$$ N+1-N^{-1}\leq \xi_{1}\leq N+1+N^{-1} \;\mbox{ and }\; -1\leq \xi_2\leq 1,$$
and $D_2$ is the rectangle where
$$ -N-2N^{-2}\leq \xi_1 \leq -N+2N^{-1}\;\mbox{ and }\; -2\leq \xi_2\leq 2.$$
Then, $\|u_N\|_{H^k}\sim N^{k-\frac12}$. We observe that
\begin{equation}\label{eq:lb_u}
\widehat{u_N \bar u_N}(\xi)\gs N^{-1},
\end{equation}
whenever $\xi=(\xi_1,\xi_2)$ satisfies
\begin{equation}\label{eq:region_xi}
2N+1-N^{-1} \leq \xi_1 \leq 2N+1+N^{-1} \;\mbox{ and }\; -1\leq \xi_2\leq 1.
\end{equation}
Therefore, for such $\xi$ and $|t|\ll 1$ it holds
\begin{eqnarray*}
 &\left|\mathcal{F}_x\left(\int_0^te^{i(t-t')\langle \nabla \rangle}\frac{\Delta}{\langle \nabla\rangle}\left(e^{it'\Delta}u_N \overline{e^{it'\Delta}u_N}\right)dt'\right)(\xi)\right|\\
\sim {}&|\xi|\left|\int_0^t\int e^{it'(\langle \xi\rangle-|\eta|^2+|\xi-\eta|^2)}\widehat{u_N}(\eta)\widehat{\overline{u_N}}(\xi-\eta)d\eta dt'\right|
\gs |t|
\end{eqnarray*}
by \eref{eq:lb_u}, $|\xi|\sim N$ and because the phase factor $\langle \xi \rangle-|\eta|^2+|\xi-\eta|^2$ is bounded whenever \eref{eq:region_xi} holds. Integrating over this region \eref{eq:region_xi} gives
\begin{eqnarray*}
\left\|\int_0^te^{-i(t-t')\langle \nabla \rangle}\frac{\Delta}{\langle \nabla\rangle}
 \left(e^{it\Delta}u_N \overline{e^{it\Delta}u_N}\right)dt'\right\|_{H^{\ell}_x}
\gs |t| N^{\ell-\frac12}
\end{eqnarray*}
and the claim follows.
\end{proof}
Finally, we indicate how we use Propositions \ref{prop:unbounded_der1} and \ref{prop:unbounded_der2} to prove Theorem \ref{thm:sharp}.
\begin{proof}[Proof of Theorem \ref{thm:sharp}]
Proposition \ref{prop:unbounded_der1} shows that for $\ell<-\frac12$ the first component of 
the directional (Fr\'echet) derivative of second order of the flow map to the reduced system \eref{eq:zs-red} at $0$ with respect to the direction $(u_0,v_0)=(u_N,v_N)$ is unbounded.

Proposition \ref{prop:unbounded_der2} shows that for $\ell-2k+\frac12>0$ the second component of
the directional derivative of second order of the flow map to the reduced system \eref{eq:zs-red} at $0$ with respect to the direction $(u_0,v_0)=(u_N,0)$ is unbounded.

If the flow map to the original system \eref{eq:zs} were $C^2$ then
we could conclude that the flow map for the reduced system is $C^2$ by
the arguments in Section \ref{sect:reduced}.  But this contradicts to
the assertions above.
\end{proof}

\appendix

\section{Alternative proof of Proposition \ref{prop:trans_low_mod}}
\label{sect:alt_proof}
\setcounter{section}{1}
\noindent
Here we present an alternate proof of Proposition \ref{prop:trans_low_mod}
that does not make use of the restriction theorem from
\cite{bejenaru_convolution_2008}.  The main source of technique for
the proof that follows is Colliander--Delort--Kenig--Staffilani
\cite{colliander_bilinear_2001}.

\begin{proof} We abuse notation and replace $g_2$ by $g_2(-\, \cdot)$
  and change variables $\zeta_2\mapsto -\zeta_2$ to obtain the usual
  convolution structure. From now on it holds $|\tau_2-|\xi_2|^2|\sim
  L_2$ within the support of $g_2$.

By the  change of variables $\tau_1=-|\xi_1|^2 + c_1$,
$\tau_2=|\xi_2|^2 +c_2$ and by applying the Cauchy-Schwarz inequality
with respect to $c_1$ and $c_2$ it suffices to consider the trilinear
expression
\[\fl
T(g_{1,c_1},g_{2,c_2},f) = 
 \int  g_{1,c_1}(\xi_1) g_{2,c_2} (\xi_2)
 f(\xi_1+\xi_2,|\xi_2|^2-|\xi_1|^2+c_1+c_2) d\xi_1 d\xi_2 
\]
where $g_{k,c_k} (\xi) = g_{k}(\xi,(-1)^k|\xi|^2+c_k)$ for $k=1,2$ and
$f$ is localized in the region $|\tau-|\xi|| \leq L$, and prove that
\begin{equation} \label{eq:reduce2}
|T(g_{1,c_1},g_{2,c_2},f)|
\ls \frac{A^{1/2}L^{1/2}}{N_1} \| g_{1,c_1} \|_{L^2_\xi} \| g_{2,c_2} \|_{L^2_\xi} \| f \|_{L^2}.
\end{equation}

We exploit the geometry of the problem in order to better localize the
interacting elements.  Taking into account the angular localization
and separation of $\xi_1$ and $\xi_2$ which is $\sim A^{-1}$ and their
size localization, it follows that after a rotation we may assume that
$\xi_{1,1}>0$, $\xi_{1,2}>0$ with $\xi_{1,1}\sim N_1$ and
$\xi_{1,2}\sim N_1A^{-1}$, and that either Case 1 or Case 2 below
holds (see Figure \ref{F:cases}).
\begin{figure}
\centering
\begin{picture}(0,0)%
\epsfig{file=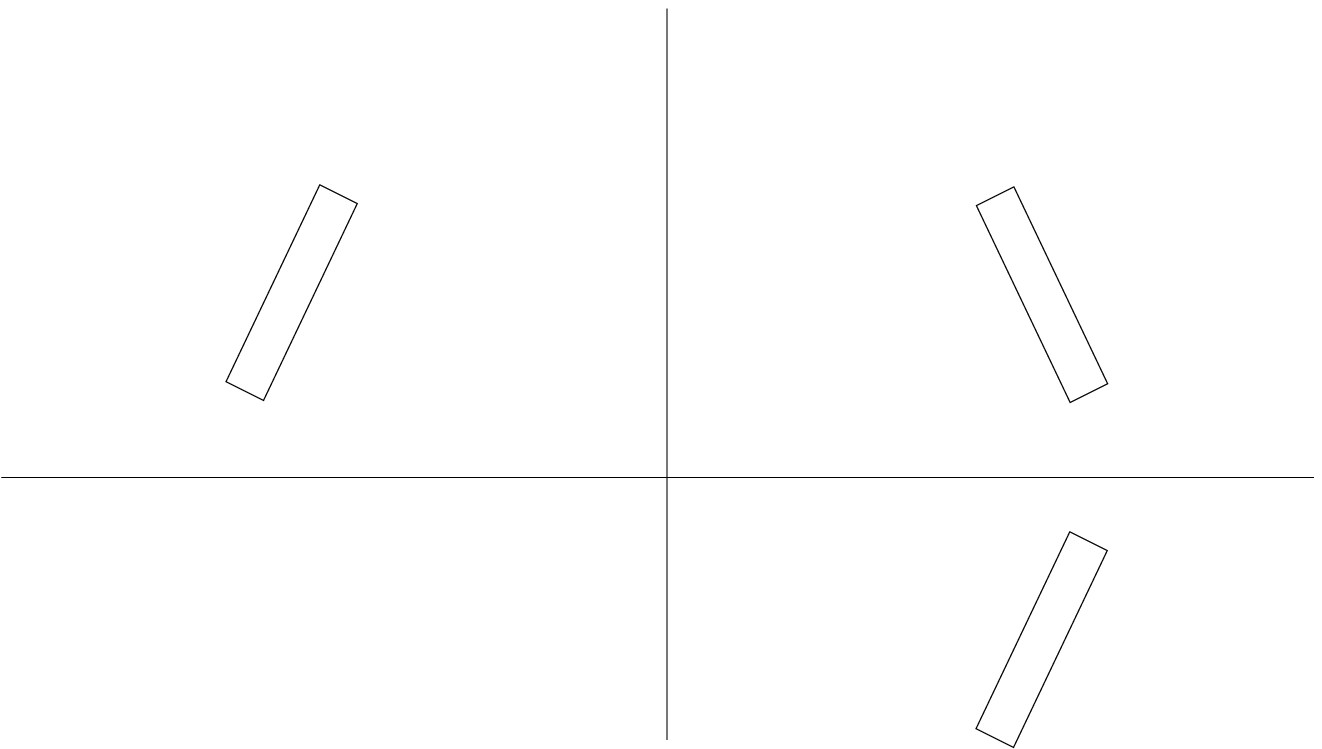}%
\end{picture}%
\setlength{\unitlength}{2368sp}%
\begingroup\makeatletter\ifx\SetFigFont\undefined%
\gdef\SetFigFont#1#2#3#4#5{%
  \reset@font\fontsize{#1}{#2pt}%
  \fontfamily{#3}\fontseries{#4}\fontshape{#5}%
  \selectfont}%
\fi\endgroup%
\begin{picture}(10644,5934)(1264,-5908)
\put(2176,-2011){\makebox(0,0)[lb]{\smash{{\SetFigFont{7}{8.4}{\rmdefault}{\mddefault}{\updefault}{\color[rgb]{0,0,0}$\xi_2$ in Case 1}%
}}}}
\put(9901,-5311){\makebox(0,0)[lb]{\smash{{\SetFigFont{7}{8.4}{\rmdefault}{\mddefault}{\updefault}{\color[rgb]{0,0,0}$\xi_2$ in Case 2}%
}}}}
\put(9901,-2161){\makebox(0,0)[lb]{\smash{{\SetFigFont{7}{8.4}{\rmdefault}{\mddefault}{\updefault}{\color[rgb]{0,0,0}$\xi_1$}%
}}}}
\put(10801,-3661){\makebox(0,0)[lb]{\smash{{\SetFigFont{7}{8.4}{\rmdefault}{\mddefault}{\updefault}{\color[rgb]{0,0,0}$+1$ direction}%
}}}}
\put(6676,-136){\makebox(0,0)[lb]{\smash{{\SetFigFont{7}{8.4}{\rmdefault}{\mddefault}{\updefault}{\color[rgb]{0,0,0}$+2$ direction}%
}}}}
\end{picture}%
\caption{After rotation, the two possible positions of $\xi_2$ in
    the proof of Prop. \ref{prop:trans_low_mod}, labeled as Case 1 and
    Case 2.}
  \label{F:cases}
\end{figure}

\noindent\textit{Case 1}.  $\xi_{2,1}<0$, $\xi_{2,2}>0$ with $|\xi_{2,1}|\sim N_1$ and $|\xi_{2,2}|\sim N_1A^{-1}$.

\noindent\textit{Case 2}.  $\xi_{2,1}>0$, $\xi_{2,2}<0$ with $|\xi_{2,1}|\sim N_1$ and $|\xi_{2,2}|\sim N_1A^{-1}$.
\begin{figure}
\centering
\begin{picture}(0,0)%
\epsfig{file=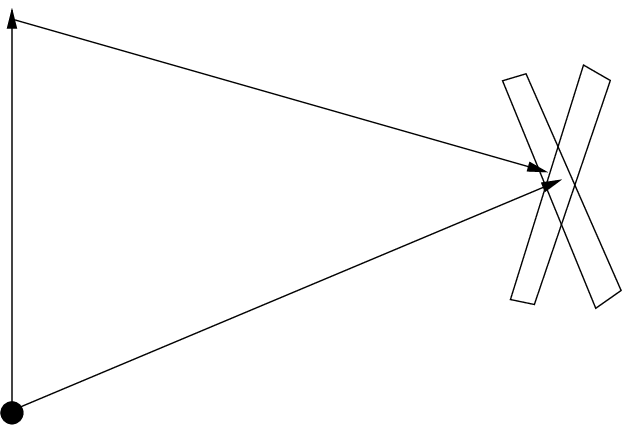}%
\end{picture}%
\setlength{\unitlength}{2763sp}%
\begingroup\makeatletter\ifx\SetFigFont\undefined%
\gdef\SetFigFont#1#2#3#4#5{%
  \reset@font\fontsize{#1}{#2pt}%
  \fontfamily{#3}\fontseries{#4}\fontshape{#5}%
  \selectfont}%
\fi\endgroup%
\begin{picture}(4272,2870)(3293,-4794)
\put(5551,-4186){\makebox(0,0)[lb]{\smash{{\SetFigFont{8}{9.6}{\rmdefault}{\mddefault}{\updefault}{\color[rgb]{0,0,0}support of $\xi-\xi_2$}%
}}}}
\put(3451,-3211){\makebox(0,0)[lb]{\smash{{\SetFigFont{8}{9.6}{\rmdefault}{\mddefault}{\updefault}{\color[rgb]{0,0,0}$\xi=\xi_1+\xi_2$}%
}}}}
\put(4351,-2161){\makebox(0,0)[lb]{\smash{{\SetFigFont{8}{9.6}{\rmdefault}{\mddefault}{\updefault}{\color[rgb]{0,0,0}$-\xi_2$}%
}}}}
\put(4126,-4636){\makebox(0,0)[lb]{\smash{{\SetFigFont{8}{9.6}{\rmdefault}{\mddefault}{\updefault}{\color[rgb]{0,0,0}$\xi_1$}%
}}}}
\put(5626,-2311){\makebox(0,0)[lb]{\smash{{\SetFigFont{8}{9.6}{\rmdefault}{\mddefault}{\updefault}{\color[rgb]{0,0,0}support of $\xi_1$}%
}}}}
\end{picture}

\begin{picture}(0,0)%
\epsfig{file=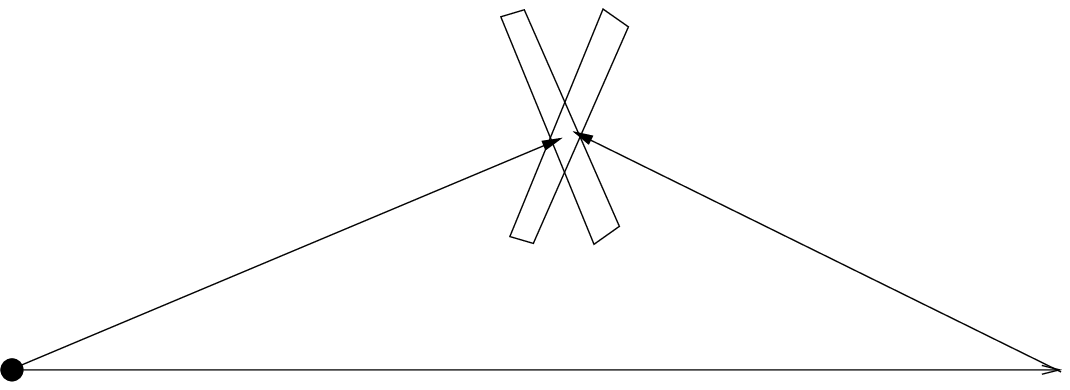}%
\end{picture}%
\setlength{\unitlength}{2763sp}%
\begingroup\makeatletter\ifx\SetFigFont\undefined%
\gdef\SetFigFont#1#2#3#4#5{%
  \reset@font\fontsize{#1}{#2pt}%
  \fontfamily{#3}\fontseries{#4}\fontshape{#5}%
  \selectfont}%
\fi\endgroup%
\begin{picture}(7295,2769)(1343,-5819)
\put(6676,-4561){\makebox(0,0)[lb]{\smash{{\SetFigFont{8}{9.6}{\rmdefault}{\mddefault}{\updefault}{\color[rgb]{0,0,0}$-\xi_2$}%
}}}}
\put(4201,-5761){\makebox(0,0)[lb]{\smash{{\SetFigFont{8}{9.6}{\rmdefault}{\mddefault}{\updefault}{\color[rgb]{0,0,0}$\xi=\xi_1+\xi_2$}%
}}}}
\put(3076,-4561){\makebox(0,0)[lb]{\smash{{\SetFigFont{8}{9.6}{\rmdefault}{\mddefault}{\updefault}{\color[rgb]{0,0,0}$\xi_1$}%
}}}}
\put(3526,-3511){\makebox(0,0)[lb]{\smash{{\SetFigFont{8}{9.6}{\rmdefault}{\mddefault}{\updefault}{\color[rgb]{0,0,0}support of $\xi_1$}%
}}}}
\put(5626,-3586){\makebox(0,0)[lb]{\smash{{\SetFigFont{8}{9.6}{\rmdefault}{\mddefault}{\updefault}{\color[rgb]{0,0,0}support of $\xi-\xi_2$}%
}}}}
\end{picture}%
  \caption{Depiction of Case 1 (top) and Case 2 (bottom). $\xi_1$ is supported in an annular
    ring $D_1$ of thickness $L/N_1$ and $\xi_2$ is supported in an
    annular ring $D_2$ of thickness $L/N_1$.  For fixed
    $\xi=\xi_1+\xi_2$, $\xi_1$ is confined to $D_1$ and also to
    $\xi-D_2$. These two sets have thickness $L/N_1$ but also meet at
    an angle $A^{-1}$, and thus $\xi_{1,1}$ is confined to an interval
    of size $L/N_1$ and $\xi_{1,2}$ is confined to an
    interval of size $LA/N_1$}
  \label{F:case12}
\end{figure}

\noindent In addition we consider the following  two cases separately, Case A: $L\geq N$ and Case B: $L\leq N$.

\smallskip

\noindent \textit{Case A}.  Suppose that $L\geq N$.  Since $|\tau-|\xi||\leq L$, we have that $|\xi_2|^2-|\xi_1|^2$ is confined to an interval of size $L$, and thus $|\xi_2|-|\xi_1|$ is confined to an interval of size $L/N_1$.   By the ``orthogonality'' Lemma \ref{L:localize} below, and Cauchy-Schwarz, we might as well assume that $|\xi_1|$ and $|\xi_2|$ are confined to fixed intervals of size $L/N_1$. Note that in the two cases outlined above, we have (see Fig. \ref{F:case12})

\smallskip

\noindent\textit{Case A1}.  $\xi_{1,2}+\xi_{2,2} \sim N_1A^{-1}$ and if $\xi=\xi_1+\xi_2$ is fixed, then $\xi_{1,1}$ is contained in an interval of size $LN_1^{-1}$.

\smallskip

\noindent\textit{Case A2}.  $\xi_{1,1}+\xi_{2,1} \sim N_1$ and if $\xi=\xi_1+\xi_2$ is fixed, then $\xi_{1,2}$ is contained in an interval of size $LAN_1^{-1}$.

\smallskip

\noindent Let $\mu =\xi_1+\xi_2$, $\nu =-|\xi_1|^2+|\xi_2|^2+c_1+c_2$, and in Case 1 let $\sigma = \xi_{1,1}$, but in Case 2 let $\sigma =\xi_{1,2}$.  Denote by $J$ the Jacobian determinant. We have
\[
J= \begin{array}{ c | c  c  c  c |}
& \xi_{1,1} & \xi_{1,2} & \xi_{2,1} & \xi_{2,2} \\
\hline 
\mu_1 & 1 & 0 & 1 & 0 \\
\mu_2 & 0 & 1 & 0 & 1 \\
\nu   &  -2\xi_{1,1}  &  -2\xi_{1,2} & 2\xi_{2,1} & 2\xi_{2,2} \\
\sigma & * & * & 0 & 0
\end{array}
\]
and thus
\[
|J| =
\cases{
2|\xi_{2,2}+\xi_{1,2}| & in Case 1\\
2|\xi_{2,1}+\xi_{1,1}| & in Case 2\\}
 \sim
\cases{
N_1A^{-1} & in Case 1\\
N_1 & in Case 2\\}\;.
\]
So, $|J|$ is essentially constant over the region of integration, and can be removed from the integration.  We obtain
\begin{eqnarray*}
\fl T(g_{1,c_1},g_{2,c_2},f)
=\int   g_{1,c_1}(\xi_1) g_{2,c_2} (\xi_2) f(\mu ,\nu ) |J|^{-1} \, d\mu \, d\nu \, d\sigma 
\leq |J|^{-1/2} I_1 I_2  
\end{eqnarray*}
where
\[\fl
I_1 =\left( \int_{\mu,\nu,\sigma} |J|^{-1} |g_{1,c_1}(\xi_1) g_{2,c_2} (\xi_2)|^2 \, d\mu\, d\nu\, d\sigma\right)^{1/2} = \|g_{1,c_1}\|_{L^2_{\xi_1}} \|g_{2,c_2}\|_{L^2_{\xi_2}}
\]
and
\[
I_2 =\left( \int_{\mu,\nu}  |f(\mu ,\nu)|^2 \left( \int_\sigma d\sigma \right) \, d\mu \, d\nu\, \right)^{1/2} \,.
\]
The measure of the support of $\sigma$, for fixed $\mu=\xi_1+\xi_2$, in Case 1 is $LN_1^{-1}$ and in Case 2 is $LAN_1^{-1}$.  Thus, we obtain \eref{eq:reduce2}.

\smallskip

\noindent\textit{Case B}.  Now suppose that $ L\leq N$. 
Let $\{E_j\}$ be a partition of $[0,+\infty)$ into intervals of length $L$.  Then the left side of \eref{eq:reduce2} becomes
\[
\sum_j \int g_1(\xi_1)g_2(\xi_2) f(\xi_1+\xi_2, \cdot) \chi_{E_j}(|\xi_1+\xi_2|) \, d\xi_1\, d\xi_2.
\]
For a fixed $j$, we have that $|\xi|$ is localized to an interval of length $L$, and  since $|\tau-|\xi||\leq L$, we obtain that $|\xi_2|^2-|\xi_1|^2$ is localized to an interval of size $L$, from which it follows that $|\xi_2|-|\xi_1|$ is localized to an interval of length $L/N_1$.  We can now follow the argument of Case A to obtain the bound
\begin{eqnarray*}
\fl |T(g_{1,c_1},g_{2,c_2},f)|
\ls \frac{A^{1/2}L^{1/2}}{N_1} 
\sum_j  \|g_1(\xi_1)g_2(\xi_2)
\chi_{E_j}(|\xi_1+\xi_2|)\|_{L^2_{\xi_1\xi_2}} \|f(\xi,\tau) \chi_{E_j}(|\xi|) \|_{L^2_{\xi \tau}} \,.
\end{eqnarray*}
Applying Cauchy-Schwarz with respect to  $j$ we complete the proof of \eref{eq:reduce2}.
\end{proof}

\begin{lemma}\label{L:localize}
Suppose $N_1\gtrsim 1$, $1\lesssim A \ll N_1$, $k\ll N_1^2$ and that $x,y\geq 0$ satisfy 
\[k \leq x^2-y^2 \leq k + N_1A^{-1}\,, \qquad  \frac{1}{4} N_1 \leq x,y \leq 4N_1 \, .\]  
Decompose $[\frac{1}{4} N_1,4N_1]$ into a sequence of intervals $\{I_j\}$ each of length $A^{-1}$.  Then there is a mapping $j\mapsto k(j)$ such that
\[ 
y\in I_j \; \Rightarrow \;x\in I_{k(j)-100}\cup \cdots \cup I_{k(j)+100} \,.
\]  
Moreover, as $j$ ranges over the full set of intervals, $k(j)$ hits a particular element no more than $100$ times.
\end{lemma}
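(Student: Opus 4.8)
The plan is to exploit the elementary observation that for a fixed $y$ the constraint $k \le x^2-y^2 \le k+N_1A^{-1}$ confines $x^2$ to an interval of length $N_1A^{-1}$, and since $x\sim N_1$ this confines $x$ itself to an interval of length $\ls A^{-1}$, i.e. to boundedly many of the $I_j$. First I would fix notation: after harmlessly enlarging the partition (every admissible $x$ lies in $[\frac14 N_1,4N_1]$, so adding intervals outside this range changes nothing in the conclusions), I may assume $\{I_j\}$ tiles a neighbourhood of $[\frac14 N_1,4N_1]$ by intervals of length $A^{-1}$; write $c_j$ for the centre of $I_j$ and $\phi(t):=\sqrt{k+t^2}$. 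I would also record at the outset that the hypotheses force $k\le 16 N_1^2$, since $16N_1^2\ge x^2\ge k+y^2\ge k+\frac1{16}N_1^2$. Then I \emph{define} $k(j)$ to be the index $m$ with $\phi(c_j)\in I_m$.

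For the inclusion: given $y\in I_j$ admitting some admissible $x$, one has $|y-c_j|\le \frac12 A^{-1}$ and $y+c_j\ls N_1$, hence $|y^2-c_j^2|\ls N_1A^{-1}$; combined with $0\le x^2-y^2-k\le N_1A^{-1}$ this yields $|x^2-\phi(c_j)^2|\ls N_1A^{-1}$, and dividing by $x+\phi(c_j)\gs N_1$ gives $|x-\phi(c_j)|\ls A^{-1}$ with a small explicit constant. Since $\phi(c_j)\in I_{k(j)}$, the point $x$ then lies in some $I_m$ with $|m-k(j)|$ bounded by that constant, comfortably below $100$, which is the asserted inclusion. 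For the bounded-multiplicity claim I would differentiate: $\phi'(t)=t/\sqrt{k+t^2}$, and on (the slight enlargement of) $[\frac14 N_1,4N_1]$ the bound $k\le 16N_1^2$ gives $\sqrt{k+t^2}\ls N_1$ while $t\gs N_1$, so $\phi'(t)\gs 1$; say $\phi'\ge c_0$ for an explicit $c_0\sim 1$. Hence if $k(j_1)=k(j_2)$, the values $\phi(c_{j_1}),\phi(c_{j_2})$ lie in one common interval of length $A^{-1}$, so the mean value theorem forces $|c_{j_1}-c_{j_2}|\le c_0^{-1}A^{-1}$; as the centres $c_j$ are $A^{-1}$-separated, at most $c_0^{-1}+1\le 100$ indices $j$ can share a given value of $k(j)$.

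I do not anticipate a genuine difficulty: the lemma is a quantitative bookkeeping statement saying that $y\mapsto\sqrt{k+y^2}$ is bi-Lipschitz with constants $\sim 1$ on the relevant window, and both halves above are just the Lipschitz and co-Lipschitz estimates made explicit. The one step that needs a little attention --- and which I regard as the only real subtlety --- is the behaviour near the endpoints $\frac14 N_1$ and $4N_1$, where $\phi(c_j)$ may land just outside $[\frac14 N_1,4N_1]$ and where the mean value theorem has to be applied on a segment still contained in the region of validity of the derivative bound; this is precisely why I enlarge the partition at the start, and it is harmless because the two conclusions only constrain indices $m$ for which $I_m$ actually meets the admissible set, and all admissible $x$ lie in $[\frac14 N_1,4N_1]$. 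Everything else is routine and I would keep the constants explicit only to the extent needed to see that $100$ is far more than enough.
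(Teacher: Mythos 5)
Your proof is correct and follows essentially the same route as the paper's: you define $k(j)$ via the interval containing $\sqrt{k+c_j^2}$, show $x$ is forced within $O(A^{-1})$ of that value (the paper's computation of the length of $[(A^{-2}j^2+k-4N_1A^{-1})^{1/2},(A^{-2}j^2+k+4N_1A^{-1})^{1/2}]$), and get the bounded multiplicity from the lower bound $\phi'\gtrsim 1$, which is exactly the paper's observation that the left endpoint advances by $\gtrsim A^{-1}$ as $j$ increments. The only difference is cosmetic packaging (mean value theorem versus explicit endpoint arithmetic), and your handling of the endpoint/vacuous cases is at least as careful as the paper's.
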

\begin{proof}
We take $I_j = [A^{-1}(j-\frac12), A^{-1}(j+\frac12))]$ (so $j$ ranges from $AN_1/4$ to $4AN_1$).  Suppose that $y\in I_j$.  Then $|y-A^{-1}j| \leq A^{-1}$, and therefore
\[k-4N_1A^{-1} \leq x^2 - A^{-2}j^2 \leq k + 4N_1A^{-1},\]
which implies that
\[(A^{-2}j^2 + k - 4N_1A^{-1})^{-1/2} \leq x \leq (A^{-2}j^2 + k + 4N_1A^{-1})^{-1/2}\,.\]
The length of this interval is 
\[\frac{8N_1A^{-1}}{
(A^{-2}j^2 + k - 4N_1A^{-1})^{-1/2}+ (A^{-2}j^2 + k + 4N_1A^{-1})^{-1/2}} \lesssim A^{-1}.\]
Also, as we increment from $j$ to $j+1$, the left endpoint of the interval advances by an amount
\[\frac{2A^{-2}j}{(A^{-2}j^2 + k - 4N_1A^{-1})^{-1/2}+ (A^{-2}j^2 + k + 4N_1A^{-1})^{-1/2}} \gtrsim A^{-1},\]
and the claim follows.\end{proof}

\bibliographystyle{plain} \bibliography{zs-refs}

\end{document}